\newcommand{\ttmat}[4]{\begin{pmatrix}
#1 & #2 \\
#3 & #4
\end{pmatrix}}
\newcommand{\sm}[4]{\ensuremath{\left(\begin{smallmatrix}#1 & #2 \\ #3 & #4\end{smallmatrix}\right)}}
\newcommand{\smthree}[3]{\ensuremath{\left(\begin{smallmatrix}1 & #1 & #2  \\ 0& 1 & #3 \\
0 & 0 & 1\end{smallmatrix}\right)}}
\newcommand{\mr}[1]{\mathrm{#1}}
\newcommand{\mf}[1]{\mathfrak{#1}}
\newcommand{\mc}[1]{\mathcal{#1}}
\newcommand{\mb}[1]{\mathbb{#1}}
\newcommand{\Z}{\mb{Z}}
\newcommand{\Q}{\mb{Q}}
\newcommand{\zp}{\mb{Z}_p}
\newcommand{\fp}{\mb{F}_p}
\newcommand{\F}{\mb{F}}
\newcommand{\ab}{\mr{ab}}
\newcommand{\sdot}{\,\cdot\,}
\newcommand{\ps}[1]{\llbracket #1 \rrbracket}
\newcommand{\Iw}{\mr{Iw}}
\newcommand{\GL}{\mr{GL}}
\newcommand{\fblock}[1]{\begin{array}{|@{\:}cc}
#1 \\
\hhline{|--|}
\end{array}}
\newcommand{\binmat}[2]{\genfrac{[}{]}{0pt}{}{#1}{#2}}
\DeclareMathOperator{\Hom}{Hom} \DeclareMathOperator{\Aut}{Aut}
\DeclareMathOperator{\End}{End} \DeclareMathOperator{\Gal}{Gal}
\DeclareMathOperator{\coker}{coker}  
\DeclareMathOperator{\image}{im}
\DeclareMathOperator{\Tot}{Tot}
\newcommand{\up}[1]{^{(#1)}}
\newcommand{\Cl}{\mathrm{Cl}}
\newtheorem{theorem}{Theorem}[subsection]
\newtheorem{proposition}[theorem]{Proposition}
\newtheorem{lemma}[theorem]{Lemma}
\newtheorem{corollary}[theorem]{Corollary}
\newtheorem*{thmA}{Theorem A}
\newtheorem*{thmB}{Theorem B}
\newtheorem*{thmC}{Theorem C}
\theoremstyle{definition}
\newtheorem{definition}[theorem]{Definition}
\theoremstyle{remark}
\newtheorem{remark}[theorem]{Remark}
\newtheorem*{ack}{Acknowledgments}
\newtheorem*{fin}{Financial Support}
\newtheorem{example}[theorem]{Example}
\numberwithin{equation}{section}
\renewcommand{\baselinestretch}{1.2}
\begin{document}

\title{Generalized Bockstein maps and Massey products}
\author{ \ \ 
Yeuk Hay Joshua Lam\thanks{Institut des Hautes \'Etudes Scientifiques,
	35 Route de Chartres,
	Bures-sur-Yvette 91440,
	France,
	\texttt{ylam@ihes.fr}}
\and 
Yuan Liu\thanks{Department of Mathematics, University of Illinois Urbana-Champaign, 1409 W. Green Street, Urbana, IL\ \ 61801, \texttt{yyyliu@illinois.edu}} 
\and 
Romyar Sharifi\thanks{Department of Mathematics, University of California, Los Angeles, 520 Portola Plaza, Los Angeles, CA\ \ 90095, \texttt{sharifi@math.ucla.edu}} \ \
\and
Preston Wake\thanks{Department of Mathematics, Michigan State University, 619 Red Cedar Road, East Lansing, MI\ 48824, \texttt{wakepres@msu.edu}} 
\and
Jiuya Wang\thanks{
	Department of Mathematics,
	University of Georgia,
	Boyd Graduate Studies Research Center,
	Athens, GA\ \ 30602,
	\texttt{jiuya.wang@uga.edu}}
}
\date{}
\maketitle

\begin{abstract}
Given a profinite group $G$ of finite $p$-cohomological dimension and a pro-$p$ quotient $H$ of $G$ by a closed normal subgroup $N$, we study the filtration on the Iwasawa cohomology of $N$ by powers of the augmentation ideal in the group algebra of $H$. We show that the graded pieces are related to the cohomology of $G$ via analogues of Bockstein maps for the powers of the augmentation ideal.  For certain groups $H$, we relate the values of these generalized Bockstein maps to Massey products relative to a restricted class of defining systems depending on $H$.  We apply our study to prove lower bounds on the $p$-ranks of class groups of certain nonabelian extensions of $\Q$ and to give a new proof of the vanishing of Massey triple products in Galois cohomology.
\end{abstract}
\renewcommand{\thefootnote}{\arabic{footnote}} 
\renewcommand{\thefootnote}{\fnsymbol{footnote}} 
\footnotetext{\!\!\textbf{MSC2020:} 20J05, 20J06, 12G05 (Primary), 11R23, 11R34 (Secondary)} 

\newpage
\tableofcontents

\section{Introduction}

At its essence, this paper revolves around the fundamental question: 
\begin{quote} \emph{How does the continuous cohomology of a profinite group $G$ with compact coefficients compares with the cohomology of an open normal subgroup $N$?}
\end{quote}

As a starting point, if $G$ has cohomological dimension $d$, then 
corestriction induces an isomorphism from the $G/N$-coinvariants of a $d$th cohomology group of $N$ to the $d$th cohomology group of $G$ with the same coefficients. We view this corestriction map as the first of a sequence of generalized Bockstein maps $\Psi^{(n)}$ for $n \ge 0$, which we extend to closed $N$ by considering Iwasawa cohomology. The powers of the augmentation ideal $I$ of a completed group ring of $G/N$ yield a natural filtration on the domain of corestriction. In Section \ref{sec:bock}, we show that the $n$th graded piece of this augmentation filtration is isomorphic to the cokernel of $\Psi^{(n)}$, employing two purely homological results of Appendix \ref{appendix} in the proof.  In Section \ref{massey}, we demonstrate how, in many cases, the image of $\Psi^{(n)}$ is described by $n$-fold Massey products. 

Massey products were first introduced by Massey \cite{massey58} as a tool for proving that two topological spaces are not homotopy equivalent even when they have isomorphic cohomology rings. 
The best-known example involves the complement of the Borromean rings in $\mathbb{R}^3$, three pairwise unlinked circle which are nonetheless linked, resulting in a nontrivial Massey triple product in the second cohomology.
In algebra, Massey products are used to study properties of a group $G$ that are not detected by the group cohomology ring itself. Massey products of tuples of homomorphisms on $G$ valued in a ring $R$ are obstructions in $H^2(G,R)$ to lifting homomorphisms to unipotent matrices from the quotient by the center. 

Our initial motivation for studying this question came from Iwasawa theory. 
Indeed, Galois groups of number fields with restricted ramification above a prime $p$ have $p$-cohomological dimension equal to $2$, and their second cohomology groups with coefficients in $p$-power roots of unity are closely related to ideal class groups. In such a setting, the fundamental question above translates to comparing ideal class groups as one goes up a tower of fields, the original question of Iwasawa theory. In this vein, Mazur \cite{mazur} described an analogy between knot complements in $3$-manifolds and Galois groups with restricted ramification, relating the Alexander polynomial of a knot and a characteristic ideal of an inverse limit of class groups. Morishita explored this analogy in terms of Massey products (see for example \cite{morishita2004}).

The third author studied Massey products in an Iwasawa-theoretic context, relating them to the structure of augmentation-graded pieces of limits of class groups in a nonabelian tower of Kummer extensions \cite{sh-massey}. This paper distills the purely algebraic results of the latter paper from their number-theoretic application. The distinct perspective using generalized Bockstein maps, that we introduce here, allows us to go beyond the procyclic setting of \cite{sh-massey}. 

Massey products of length $n$ are defined only if certain $(n-1)$-fold Massey products vanish. Even when defined, there is some indeterminacy in their definition, resulting from a choice of defining system, a homomorphism to the quotient of the $(n+1)$-dimensional unipotent matrices by their center. The Massey product provides the obstruction to lifting this homomorphism to the full unipotent group. In order to view $n$-fold Massey products as values of $\Psi^{(n)}$, we define an appropriate notion of a proper defining system, reducing the aforementioned indeterminacy. The requisite definitions are given in some generality in Section \ref{sec:masseyreview}, providing the framework for the comparison with $\Psi^{(n)}$ in specific cases described in Section \ref{massey}.

In Section \ref{sec:cyclo}, we demonstrate how our methods can be used to derive concrete arithmetic results by proving lower bounds on the $p$-ranks of class groups of finite $p$-ramified bicyclic and Heisenberg extensions of $\Q(\mu_p)$. Though we eschew Iwasawa-theoretic applications in this paper to ground our study, a description of the augmentation filtrations of inverse limits of $p$-parts of class groups in $\zp$-extensions, 
derived using our methods, may be found in \cite{reciprocity}. 

We also consider applications of generalized Bockstein maps to the study of absolute Galois groups of fields. Many algebraic properties of absolute Galois groups are encoded cohomologically as properties of the norm residue symbol. The celebrated norm residue isomorphism theorem of Voevodsky and Rost \cite{voevodsky} (formerly the Milnor-Bloch-Kato conjecture), describes cohomology rings of absolute Galois groups with coefficients in twists of roots of unity as Milnor $K$-rings of the fields. 

The Massey vanishing conjecture of  Min\'{a}\v{c} and T\^{a}n \cite{mt1} goes beyond the cohomological ring structure to posit that, for $n \ge 3$, all definable $n$-fold Massey products with $\F_p$-coefficients vanish for some choice of defining system. Earlier work of Hopkins--Wickelgren \cite{hw} had established this for $n = 3$ and $p = 2$ over number fields. The full $n = 3$ case of this conjecture is the triple Massey vanishing theorem of Efrat--Matzri \cite{em} and Min\'{a}\v{c}--T\^{a}n \cite{mt2}. 
The introduction to Section \ref{masseyvanish} provides a more detailed, yet still incomplete, summary of the history of and rapid progress in this area.
In that section, we show that certain algebraic properties of absolute Galois groups are naturally expressed in terms of generalized Bockstein maps. This perspective enables us to
give a new proof for odd primes of the triple Massey vanishing theorem.

Min\'{a}\v{c} and T\^{a}n originally formulated the Massey vanishing conjecture, in part, as a way to help cohomologically characterize which profinite groups are isomorphic to absolute Galois groups of fields.  We suspect that generalized Bockstein maps have an important role to play in formulating and understanding such cohomological characterizations.

We next provide a more detailed overview of our main results.

\subsection{Comparing cohomology using generalized Bockstein maps}
\label{introBock}
Let $G$ be a profinite group of $p$-cohomological dimension $d \ge 1$.  Let $H$ be a finitely generated pro-$p$ quotient of $G$ by a closed normal subgroup $N$.  Let $\Omega = \zp\ps{H}$ denote the completed $\zp$-group ring of $H$, the inverse limit of the $\zp$-group rings of the finite quotients of $H$.  Let $T$ be a finitely generated $\zp$-module with a continuous action of $G$.  This paper is concerned  with the study of connecting maps in the continuous $G$-cohomology of the augmentation filtration of the tensor product $T \otimes_{\zp} \Omega$.  That is, if $I = \ker(\Omega \to \zp)$ denotes the augmentation ideal of $\Omega$, then we have exact sequences
\begin{equation} \label{keyexseq}
	0 \to T \otimes_{\zp} I^n/I^{n+1} \to T \otimes_{\zp} \Omega/I^{n+1} \to T \otimes_{\zp} \Omega/I^n \to 0
\end{equation}
for each $n \ge 1$ such that $\Omega/I^n$ is $\zp$-flat. Our interest lies in the connecting homomorphisms
$$
	\Psi^{(n)} \colon H^{d-1}(G,T \otimes_{\Z_p} \Omega/I^n) \to H^d(G,T) \otimes_{\Z_p} I^n/I^{n+1}
$$
attached to these sequences, which we refer to as \emph{generalized Bockstein maps}, due to their similarlity to usual Bockstein maps
for exact sequences of $p$-power order cyclic groups.

We can use the Bockstein maps to partially describe the second \emph{Iwasawa cohomology group} $H^d_{\Iw}(N,T)$ of $N$ with $T$-coefficients.  This cohomology group is the inverse limit of the groups $H^d(U,T)$ under corestriction maps, where $U$
runs over the open normal subgroups of $G$ containing $N$.   It is naturally endowed, through the $\zp[G/U]$-actions on each $H^d(U,T)$, with the structure of an $\Omega$-module.  We prove that the cokernels of the generalized
Bockstein maps describe the graded quotients in the augmentation filtration of $H^d_{\Iw}(N,T)$ (see Theorem \ref{augfilt}).

\begin{thmA}
	There are canonical isomorphisms
	$$
		\label{eq:intro main thm}
		\frac{I^nH^d_\Iw(N,T) }{I^{n+1}H^d_\Iw(N,T)} \cong \frac{H^d(G,T) \otimes_{\Z_p} I^n/I^{n+1}}{\image\Psi^{(n)}}.
	$$
\end{thmA}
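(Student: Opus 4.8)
The plan is to reduce the statement to two ingredients and then combine them. Write $M_\infty := H^2_\Iw(N,T)$ and $M_n := H^2(G, T\otimes_{\zp}\Omega/I^n)$ for each $n$ with $\Omega/I^n$ being $\zp$-flat. First I would record the standard identification $M_\infty\cong H^2(G, T\cotimes{\zp}\Omega)$: Shapiro's lemma gives $H^2(U,T)\cong H^2(G, T\otimes_{\zp}\zp[G/U])$ compatibly with corestriction and with the projections $\zp[G/U']\to\zp[G/U]$, and passing to the inverse limit over the open normal $U\supseteq N$ (the transition maps being surjective, so that the relevant $\varprojlim^1$ terms vanish) turns the left side into $M_\infty$ and the right side into $H^2(G, T\cotimes{\zp}\Omega)$. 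With this in hand, Theorem~A follows from two claims: (a) short exact sequences $0\to\coker\Psi^{(n)}\to M_{n+1}\to M_n\to 0$; and (b) canonical isomorphisms $M_n\cong M_\infty/I^nM_\infty$ identifying $M_{n+1}\to M_n$ with the projection $M_\infty/I^{n+1}M_\infty\to M_\infty/I^nM_\infty$.

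For (a), note first that $H$, hence $G$, acts trivially on each $I^n/I^{n+1}$, since $(h-1)I^n\subseteq I^{n+1}$ for $h\in H$; as $I^n/I^{n+1}$ is $\zp$-flat (a consequence of the flatness of the $\Omega/I^m$), this gives $H^2(G, T\otimes_{\zp}I^n/I^{n+1})\cong H^2(G,T)\otimes_{\zp}I^n/I^{n+1}$, the target of $\Psi^{(n)}$. Now take the long exact cohomology sequence of \eqref{keyexseq}. Because $\mathrm{cd}_p(G)=2$, the term $H^3(G, T\otimes_{\zp}I^n/I^{n+1})$ vanishes, so the sequence ends with $H^1(G, T\otimes_{\zp}\Omega/I^n)\xrightarrow{\Psi^{(n)}}H^2(G,T)\otimes_{\zp}I^n/I^{n+1}\to M_{n+1}\to M_n\to 0$. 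Extracting the cokernel of $\Psi^{(n)}$ yields the short exact sequence in (a), with $\coker\Psi^{(n)} = (H^2(G,T)\otimes_{\zp}I^n/I^{n+1})/\image\Psi^{(n)}$, which is already the right-hand side of the theorem.

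For (b), I would argue derived-categorically. Represent $\RG(G, T\cotimes{\zp}\Omega)$ by a bounded complex $C^\bullet$ of flat $\Omega$-modules concentrated in degrees $[0,2]$ (possible since $\mathrm{cd}_p(G)=2$ and $\Omega$ is the coefficient ring). The key compatibility is that $C^\bullet\otimes^{\mathbf{L}}_\Omega\Omega/I^n$ computes $\RG(G, T\otimes_{\zp}\Omega/I^n)$; this rests on $\,(T\cotimes{\zp}\Omega)\cotimes{\Omega}\Omega/I^n = T\otimes_{\zp}\Omega/I^n\,$ together with the flatness of $\Omega/I^n$ over $\zp$ and of $C^\bullet$ over $\Omega$. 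Granting this, since $C^\bullet$ sits in degrees $\le 2$, its top cohomology commutes with the right-exact functor $-\otimes_\Omega\Omega/I^n$: one gets $M_n = H^2(C^\bullet\otimes_\Omega\Omega/I^n)\cong H^2(C^\bullet)\otimes_\Omega\Omega/I^n = M_\infty\otimes_\Omega\Omega/I^n = M_\infty/I^nM_\infty$. Naturality in $n$ identifies the transition map with the displayed projection, giving (b).

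Combining (a) and (b), the kernel of $M_{n+1}\to M_n$ is on one hand $\coker\Psi^{(n)}$ and on the other hand $I^nM_\infty/I^{n+1}M_\infty = I^nH^2_\Iw(N,T)/I^{n+1}H^2_\Iw(N,T)$; equating the two presentations of this kernel gives the asserted canonical isomorphism. The main obstacle is the base-change compatibility underlying (b): producing a complex of flat $\Omega$-modules in the correct degrees that computes the Iwasawa cohomology and whose derived base change along $\Omega\to\Omega/I^n$ recovers $\RG(G, T\otimes_{\zp}\Omega/I^n)$. This is where the finite $p$-cohomological dimension, the $\zp$-flatness hypotheses on the $\Omega/I^n$, and the interplay between the completed tensor product and continuous cochains all have to be handled carefully; once it is in place, everything else is formal.
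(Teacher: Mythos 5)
Your proposal is correct in substance, but it takes a genuinely different route from the paper's proof of Theorem A (Theorem \ref{augfilt} in the body, proved there for arbitrary $p$-cohomological dimension $d$). Your step (a) — the four-term exact sequence ending in $M_{n+1} \to M_n \to 0$ obtained from \eqref{keyexseq} and $\mathrm{cd}_p(G)=2$ — is fine and is implicit in the paper as well. The real divergence is your step (b): the paper never proves the descent isomorphism $H^2(G,T\otimes_{\zp}\Omega/I^n)\cong H^2_{\Iw}(N,T)/I^nH^2_{\Iw}(N,T)$ (whose case $n=1$ is \eqref{coinvisom}). Instead, it compares $\Psi^{(n)}$ directly with the connecting map $\partial^{(n)}$ in the $H$-homology of $H^2_{\Iw}(N,T)\otimes_R(\text{augmentation sequence})$, using the Grothendieck spectral sequence of Proposition \ref{spectral} together with the appendix lemmas: Lemma \ref{commsquare}, which says edge maps to total terms commute with connecting morphisms, and Lemma \ref{dontneedderived}, which replaces $L_1(F\circ F')$ by $H^{d-1}(G,\sdot)$; the graded piece is then identified by Lemma \ref{bocklemma}, computing $\coker \partial^{(n)} \cong I^nA/I^{n+1}A$ for $A = H^2_{\Iw}(N,T)$. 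Your route — representing $\RG(G,T\cotimes{\zp}\Omega)$ by a perfect complex of $\Omega$-modules in degrees $[0,2]$, invoking derived base change along $\Omega\to\Omega/I^n$, and using right-exactness at the top degree — is legitimate, and the inputs you flag as the ``main obstacle'' (perfectness in amplitude $[0,2]$ and the base-change quasi-isomorphism) are exactly what \cite[Proposition 1.6.5]{fk} and \cite[Propositions 3.1.3 and 3.2.4]{ls} supply under the paper's standing hypotheses; the functoriality of base change in $n$ then delivers the compatibility that the paper extracts from Lemma \ref{commsquare}. What your approach buys is the stronger statement (b) at every level $n$ (and it generalizes verbatim to any $d$); what the paper's approach buys is that it stays within derived functors and spectral sequences of module categories, with only the two elementary appendix lemmas, rather than invoking the perfect-complex formalism. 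Two small repairs: your parenthetical justifying $\varprojlim^1$-vanishing should rest on compactness of the groups $H^1(G,T\otimes_{\zp}\zp[G/U])$ (a countable inverse system of compact groups has vanishing $\varprojlim^1$), not on surjectivity of the coefficient maps $\zp[G/U']\to\zp[G/U]$; and since $H$ need not be abelian, $\Omega$ may be noncommutative, so the identification $M_\infty\otimes_\Omega\Omega/I^n\cong M_\infty/I^nM_\infty$ requires the left/right-module bookkeeping that the paper carries out via the inversion map $\iota$ in the proof of Lemma \ref{bocklemma}.
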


The proof rests on an Iwasawa-cohomological version \cite{ls, fk} of a descent spectral sequence of Tate, applied to the terms of our exact sequences for the augmentation filtration of $\Omega$.  We verify the compatibility of these spectral sequences with generalized Bockstein maps and a connecting map in the $H$-homology of the $\zp$-tensor product of $H^d_\Iw(N,T)$ with \eqref{keyexseq}. 

\subsection{A brief primer on Massey products}

\label{introMassey}

Given a commutative ring $R$, a \emph{Massey product} $(\chi_1, \ldots, \chi_n)$ of $n$ homomorphisms $\chi_1, \ldots, \chi_n$ in $H^1(G,R)$ is an element of $H^2(G,R)$ that
provides the obstruction to a certain problem of lifting a homomorphism formed using the tuple of characters $\chi_i$ to a homomorphism $\rho \colon G \to \mr{U}_{n+1}(R)$ of $G$ to the group of $(n+1)$-dimensional unipotent matrices in $R$, with $\chi_i$ providing the $i$th off-diagonal entry $\rho_{i,i+1}$.
 
More precisely a \emph{defining
system} for a Massey product $(\chi_1, \ldots, \chi_n)$ is a homomorphism $\rho \colon G \to \mr{U}'_{n+1}(R)$ 
to the quotient of $\mr{U}_{n+1}(R)$ by its center, with $\rho_{i,i+1} = \chi_i$.
The Massey product 
$(\chi_1, \ldots, \chi_n)_{\rho}$ of $\chi_1, \ldots,
\chi_n$ relative to the defining system $\rho$ is the class in $H^2(G,R)$ of the $2$-cocycle 
$$
	F \colon (\sigma,\tau) \mapsto \sum_{i=1}^n \rho_{1,i}(\sigma)\rho_{i,n+1}(\tau).
$$
It vanishes if and only if $\rho$ lifts to a homomorphism $\tilde{\rho} \colon G \to \mr{U}_{n+1}(R)$. In other words, 
the Massey product relative to $\rho$ is the obstruction to choosing the remaining upper right-hand entry $\tilde{\rho}_{1,n+1}$ 
to make $\tilde{\rho}$ a homomorphism, which is exactly to say that $d\tilde{\rho}_{1,n+1} = -F$.

An $n$-fold Massey product $(\chi_1, \ldots, \chi_n)$ is said to be \emph{defined} if a defining system for it exists.
For $n=2$, the Massey product is defined and equals the cup product $\chi_1 \cup \chi_2$. For $n \ge 3$, 
a Massey product need not be defined, and even if it is, 
it may have \emph{indeterminacy} in its values, coming from the different choices of defining systems.
A Massey product is said to \emph{contain zero} or \emph{vanish} if it has a defining system for which the Massey product is zero.

We shall work with profinite groups and compact coefficient rings, so our Massey products take values in continuous cohomology groups, and all cocycles and homomorphisms involved are required to be continuous. In fact, we shall allow more general Massey products valued in modules over a group ring, replacing the group of unipotent matrices with an analogous object in a generalized matrix algebra.

\subsection{The images of generalized Bockstein maps}

The case $d = 2$ and $H \cong \zp$ of Theorem A was first studied in \cite{sh-massey} from a different perspective and applied in an Iwasawa-theoretic context.  Its main result has a similar form to Theorem A, but in place of the image of $\Psi^{(n)}$, it has a group of values of certain $(n+1)$-fold Massey products.
We relate the image of $\Psi^{(n)}$ to Massey products for a variety of groups $H$.  

In the situation of Section \ref{introBock} with $H \cong \Z_p$, the quotient map $G \to H$ can be thought of as an element $\chi \in H^1(G,\Z_p)$. This context was considered in \cite{sh-massey}, and a result like Theorem A is proven, but with the image of $\Psi^{(n)}$ replaced by $(n+1)$-fold Massey products of the form $(\chi,\chi,\dots,\chi, \sdot)$ with respect to certain ``proper'' defining systems. In Section \ref{procyclic}, we show that, in the case that $H$ is procyclic, the image of $\Psi^{(n)}$ is generated by these same Massey products. In other words, when $H$ is procyclic, Theorem~A recovers the main result of \cite{sh-massey}.

This raises the question of whether the relation between the values of generalized Bockstein maps and Massey products can be extended from procyclic $H$ to more general groups. The most difficult step  is to determine the appropriate notion of proper defining system. The key insight is that the proper defining systems of \cite{sh-massey} those defining systems that, in a sense,  partially have group-theoretic origin. That is, if $H$ is procyclic, then for every $n>0$, there is a group homomorphism we call the \emph{unipotent binomial matrix homomorphism}
\[
\binmat{\sdot}{n} \colon H \to \mr{U}_{n+1}(\Z_p)
\]
defined by sending a generator of $H$ to the unipotent matrix with all $1$'s on the diagonal and off-diagonal and $0$'s elsewhere (the notation is meant to evoke binomial coefficients, the non-zero entries of $\binmat{x}{n}$ being binomial coefficients:~see Section \ref{subsec:unipotent binomials}).  A defining system $\rho \colon G \to \mr{U}'_{n+2}(\Z_p)$ for the $(n+1)$-fold Massey product $(\chi,\chi,\dots,\chi, \cdot)$ is called \emph{proper} if its restriction to the upper-left copy of $\mr{U}_{n+1}(\Z_p)$ in $\mr{U}'_{n+2}(\Z_p)$ equals $\binmat{\sdot}{n} \circ \chi$.

This suggests considering defining systems that are, at least partially, of group-theoretic origin.
Let $n \ge 0$ and let $a,b\ge 0$ be such that $a+b=n$. Let
\[
\phi \colon H \to \mr{U}_{a+1}(\Z_p),  \quad \theta \colon H \to \mr{U}_{b+1}(\Z_p)
\]
be group homomorphisms.  By precomposition with $G \to H$, these define an $n$-tuple of elements of $H^1(G,\Z_p)$.  We call that pair $(\phi, \theta)$ a \emph{partial defining system} for $(n+1)$-fold Massey products involving this $n$-tuple of characters.  Our main general result, Theorem \ref{connecting}, is that a partial defining system together with a cocycle $f \in Z^1(G,T \otimes_{\Z_p} \Omega/I^n)$ constitutes a defining system.  Moreover, a partial defining system defines a homomorphism of $G$-modules
\[
p_{\phi,\theta} \colon T \otimes_{\Z_p} I^n/I^{n+1} \to T 
\]
such that $p_{\phi,\theta}(\Psi^{(n)}([f]))\in H^2(G,T)$ is the $(n+1)$-fold Massey product associated to the defining system given by $(\phi,\theta)$ and $f$ (see Theorem \ref{prop:general}).

We apply this general machinery to the procyclic case $H \cong \Z_p$ 
in Section \ref{procyclic}, taking $(a,b) = (n,0)$ and $\phi = \binmat{\sdot}{n}$.  Because $H$ is procyclic, there is an isomorphism $I^n/I^{n+1} \cong \Z_p$ for all $n$, and the map $p_{\binmat{\sdot}{n},1}$ is induced by this isomorphism.  Hence the values $p_{\binmat{\sdot}{n},1}(\Psi^{(n)}([f]))$ completely determine the image of $\Psi^{(n)}$, and in this way we show that the image of $\Psi^{(n)}$ is given by Massey products.

For more general $H$,  the graded quotients $I^n/I^{n+1}$ are more complicated, and we cannot hope for any $p_{\phi,\theta}$ to be an isomorphism.  However, it can happen that $I^n/I^{n+1}$ is a free module; suppose this is the case. If we can arrange that the maps $p_{\phi,\theta}$ for varying $(\phi,\theta)$ give a dual basis to $I^n/I^{n+1}$, then again this construction gives a way to describe the image of $\Psi^{(n)}$ in terms of Massey products. Said differently, if $I^n/I^{n+1}$ is a free module of rank $d$, then, by fixing a basis, we can think of $\Psi^{(n)}([f])$ as a $d$-tuple of elements of $H^2(G,T)$. If we can make $d$ choices of pairs $(\phi,\theta)$ such that the maps $p_{\phi,\theta}$ are the projectors onto these coordinates, then our results describe $\Psi^{(n)}([f])$ as a $d$-tuple of Massey products.

For a general group $H$ and  general $n$, we do not expect that there will exist choices of $(\phi,\theta)$ such that the $p_{\phi,\theta}$ constitute a dual basis to $I^n/I^{n+1}$. However, we give some families of examples where this is the case: $H$ is procyclic (Section \ref{procyclic}), $H$ is pro-bicyclic (Section \ref{bicyc_case}), $H$ is elementary abelian (Section \ref{elem}) and $H$ is a Heisenberg group and $n<4$ (Section \ref{heis_case}).  We explicate the result for $H \cong \Z_p^2$ in the following subsection.

\subsection{The bicyclic case: an illustration}
\label{bicyc image}

Suppose that $H$ is isomorphic to $\zp^2$, and let $\chi, \psi \colon H \to \zp$ denote the projections onto the two factors.  For each nonnegative integer $a \le n$, there is a partial defining system $\left(\binmat{\sdot}{a} \circ \chi, \binmat{\sdot}{n-a}\circ \psi\right)$. 
Applying our general result Theorem \ref{prop:general} with these defining systems, we obtain the following (see Theorem \ref{bicyc_image}):
\begin{thmB}\label{thm:B}
Suppose that $\nu = (\chi,\psi) \colon H \to \zp^2$ is an isomorphism.  Let $x, y \in I$ be such that $x+1$ and $y+1$ are group
elements mapping under $\nu$ to the standard ordered basis of $\zp^2$.  For $n \ge 2$, the cosets of $x^ay^{n-a}$ with $0 \le a \le n$ then form a $\zp$-basis for $I^n/I^{n+1}$.
\begin{enumerate}
	\item[a.] To a continuous $1$-cocycle $f \colon G \to T\otimes_{\zp} \Omega/I^n$ and $0 \le a \le n$,
	we can associate a proper defining system for an $(n+1)$-fold Massey product 
	$$
		(\chi^{(a)},\lambda,\psi^{(n-a)}) := 
		(\underbrace{\chi, \ldots, \chi}_{a\text{ times}}, \lambda, \underbrace{\psi, \ldots, \psi}_{n-a\text{ times}}) \in H^2(G,T),
	$$
	where $\lambda \colon G \to T$
	is the composition of $f$ with the quotient map $T \otimes_{\zp} \Omega/I^n \to T \otimes_{\zp} \Omega/I \cong T$.
	\item[b.] With the notation of part a, let $[f]$ denote the class of $f$ in $H^1(G,T \otimes_{\zp} \Omega/I^n)$.  Then
	$$
		\Psi^{(n)}([f]) = \sum_{a=0}^n (\chi^{(a)},\lambda,\psi^{(n-a)}) \otimes x^ay^{n-a}.
	$$
\end{enumerate}
\end{thmB}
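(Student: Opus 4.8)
The plan is to work in the coordinates $\Omega\cong\zp\ps{x,y}$, in which $I=(x,y)$ and the claimed $\zp$-basis of $I^n/I^{n+1}$ is simply the degree-$n$ part of the associated graded ring, spanned by the monomials $x^ay^{n-a}$; in particular every $\Omega/I^k$ is $\zp$-free, so that the sequences \eqref{keyexseq} and the maps $\Psi^{(k)}$ are defined. The $G$-action on $\Omega$ is multiplication by the group-like element $\mu(g)=(1+x)^{\chi(g)}(1+y)^{\psi(g)}$, whose \emph{factorization} into an $x$-part and a $y$-part is the structural feature driving the whole argument. Writing a $1$-cocycle in components, $f(g)=\sum_{i+j<n}f_{i,j}(g)\,x^iy^j$ with $f_{i,j}\colon G\to T$ and $\lambda=f_{0,0}$, the cocycle identity for $f$ unwinds into relations expressing $f_{p,q}(gh)$ as a double convolution of the $f_{i,j}$ against the binomial coefficients $\binom{\chi(g)}{i}$ and $\binom{\psi(g)}{j}$. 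By Vandermonde, these binomial coefficients assemble into two homomorphisms $\chi_*\colon G\to U_{a+1}$ and $\psi_*\colon G\to U_{b+1}$ (here and below $b=n-a$), namely the matrices of multiplication by $(1+x)^{\chi(g)}$ and $(1+y)^{\psi(g)}$ on $\zp[x]/(x^{a+1})$ and $\zp[y]/(y^{b+1})$, which are exactly the partial defining system $(\chi_*,\psi_*)$ of the two procyclic directions.

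For part a, I would build the proper defining system for $(\chi^{(a)},\lambda,\psi^{(n-a)})$ as a block upper-triangular unipotent cochain $\Phi^{(a)}\colon G\to U_{n+2}$ whose two diagonal blocks are $\chi_*$ and $\psi_*$; by construction this is proper in the sense of Definition \ref{proper_def}, with superdiagonal $(\underbrace{\chi,\dots,\chi}_{a},\lambda,\underbrace{\psi,\dots,\psi}_{n-a})$, the junction entry being $\lambda=f_{0,0}$. The off-diagonal $(a+1)\times(b+1)$ block is assembled from the components $f_{i,j}$ with $i\le a$, $j\le b$. Conceptually, $\Phi^{(a)}$ is the restriction of the $f$-twisted representation to the $x$-axis $\{1,x,\dots,x^a\}$ and the $T$-coupled $y$-axis $T\otimes\{1,y,\dots,y^{b}\}$ inside the extension of $\zp$ by $T\otimes\Omega/I^n$ determined by $f$; because that representation is genuine, the defining-system relations $d\Phi^{(a)}_{i,j}=\sum_{i<k<j}\Phi^{(a)}_{i,k}\cup\Phi^{(a)}_{k,j}$ are a reformulation of the component relations coming from the cocycle identity for $f$. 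Carrying this out — choosing the off-diagonal entries so that the \emph{two-sided} block relation $C(gh)=\chi_*(g)\,{}^{g}C(h)+C(g)\psi_*(h)$ reproduces the \emph{symmetric} double convolution satisfied by the $f_{i,j}$ — is the heart of part a and uses the factorization of $\mu$ to split the $x$- and $y$-shifts onto the two sides.

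For part b, I would identify the value of this Massey product with the $x^ay^{n-a}$-coefficient of $\Psi^{(n)}([f])$ through the common description of both as an obstruction to lifting. Choosing any lift $\tilde f\colon G\to T\otimes\Omega/I^{n+1}$ of $f$, the class $\Psi^{(n)}([f])$ is represented by $\partial\tilde f$, and its $x^ay^{n-a}$-component is the corner of $\partial$ applied to the corresponding lift $\widetilde{\Phi^{(a)}}$ of $\Phi^{(a)}$ to $U_{n+2}$; that corner is, by definition of the Massey product relative to the defining system $\Phi^{(a)}$, a representative of $(\chi^{(a)},\lambda,\psi^{(n-a)})$. Running this over all $a$ and recording the outputs against the basis $x^ay^{n-a}$ of $I^n/I^{n+1}$ yields the stated formula for $\Psi^{(n)}([f])$.

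The main obstacle is the construction in part a for general $a$ and $b$: matching the two-sided block relation for $\Phi^{(a)}$ to the two-variable convolution satisfied by $f$ forces the off-diagonal entries to be corrected combinations of the $f_{i,j}$ rather than bare components, and this reorganization — obtaining proper defining systems whose obstruction projects onto the single basis vector $x^ay^{n-a}$ — is exactly the step flagged as most difficult in the introduction. A secondary but unavoidable point is that the Massey $2$-cocycle $\sum_k\Phi^{(a)}_{1,k}\cup\Phi^{(a)}_{k,n+2}$ and the component $[\partial\tilde f]_{a,n-a}$ agree only up to an explicit coboundary, arising from the graded commutativity of the cup product (e.g.\ interchanging $\lambda\cup\psi$ and $\psi\cup\lambda$) and from the asymmetry between evaluating the $f_{i,j}$ at $g$ versus $h$; bookkeeping of these coboundaries and of the signs in the pairings $\zp\otimes T\to T$ and $T\otimes\zp\to T$ is where most of the care is needed. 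In the first nontrivial case $a=b=1$ the off-diagonal entries are the bare components $f_{1,0}$ and $f_{0,1}$, the defining-system relations are immediate, and the coboundary matching is elementary; this is the case that recovers the vanishing of triple Massey products.
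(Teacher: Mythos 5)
You have correctly reverse-engineered the paper's architecture — the partial defining system $(\binmat{\chi}{a},\binmat{\psi}{b})$ built from binomial matrices, an off-diagonal block assembled from the components of $f$, and the identification of the Massey product with the $x^ay^{n-a}$-coefficient of $\Psi^{(n)}([f])$ via a common connecting-map/obstruction description — but the step you defer as ``the heart of part a'' is a genuine gap rather than bookkeeping, and the one concrete assertion you make about it is false. Write $f=\sum_{k_1+k_2<n}\lambda_{k_1,k_2}x^{k_1}y^{k_2}$ and take $a=b=1$. You claim the bare components $\lambda_{1,0},\lambda_{0,1}$ already satisfy the defining-system relations. They do not: the nonabelian cocycle identity $\rho(gh)=\rho(g)\cdot g\rho(h)$ unwinds on the block to the \emph{two-sided} relation $\kappa(gh)=\phi(g)\,g\kappa(h)+\kappa(g)\theta(h)$ (equation \eqref{cocyclecond}), whose $(2,2)$-entry forces $d\kappa_{2,2}=-\lambda\cup\psi$, with the cup taken as $\lambda(g)\,\psi(h)$. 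By contrast, the abelian cocycle identity for $f$, via \eqref{act_sum} — the group acts by \emph{left} multiplication by $(1+x)^{\chi(g)}(1+y)^{\psi(g)}$, so both binomial shifts land on the $h$-variable — gives $d\lambda_{0,1}=-\psi\cup\lambda$, i.e.\ $\psi(g)\,g\lambda(h)$. These are different $2$-cochains (one has $(\lambda\cup\psi)+(\psi\cup\lambda)=-d(\lambda\psi)$, so they agree only up to sign and an explicit coboundary), and a defining system must be an honest cocycle into $\mc{U}'_{n+2}(T,a+1)$: you cannot repair the failure ``up to coboundary,'' because modifying the entries of $\rho$ by cochains changes the defining system, whereas the theorem asserts an equality of classes for the \emph{specific} proper defining system attached to $f$ — the rigidity of the defining system is the entire point of the statement.

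The missing idea is the paper's twisted-module device, which makes the mismatch disappear identically rather than up to coboundary. One endows $M_{a+1,b+1}(T)$ with the $G$-action $g\star m=\phi(g)\,gm\,\theta(g)^{-1}$, obtaining the module $\mf{U}_{\phi,\theta}(T)$, and shows (Lemmas \ref{mapnilpotent} and \ref{bicycform}) that $[h]\mapsto \phi(h)\,e\,\theta(h)^{-1}$ induces a $G$-equivariant isomorphism $p_{a,b}\colon T\otimes_{\zp}\Omega/I^{n+1}\to \mf{U}_{\phi,\theta}(T)$ carrying $T\cdot x^ay^b$ onto the corner copy of $T$ — this is the precise implementation of your ``factorization of $\mu(g)$,'' with the $y$-shifts transported to \emph{right} multiplication. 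Then Lemma \ref{correspondence} converts an abelian cocycle $\kappa'$ valued in $\mf{U}'_{\phi,\theta}(T)$ into a proper defining system by the twist $\kappa=\kappa'\theta$; this right multiplication by $\theta$ is exactly the ``corrected combination'' you anticipated but did not produce (in the example above it shifts $\lambda_{0,1}$ by a multiple of the pointwise product $\psi\lambda$), and Theorem \ref{connecting} shows that the connecting map for $0\to T\to\mf{U}_{\phi,\theta}(T)\to\mf{U}'_{\phi,\theta}(T)\to 0$ applied to $[\kappa']$ equals the Massey $2$-cocycle of $\rho$ \emph{on the nose}, with no residual coboundary or sign bookkeeping. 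Granting this, your part b is the paper's Theorem \ref{prop:general}: naturality of connecting homomorphisms applied to the morphism \eqref{eq:general diagram} of short exact sequences, together with the computation that $p_{a,b}$ restricted to $I^n/I^{n+1}$ is the coordinate functional dual to $x^ay^b$. Until you exhibit the twist that makes $\rho$ a strict cocycle and check its compatibility with that coordinate projection, both parts of your argument remain unestablished.
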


Let us illustrate Theorem B in some detail in the case that $n=2$ and $a=1$. In this case, we have
$$
	\Omega/I^2 = \mathbb{Z}_p[x,y]/(x^2, xy, y^2)
$$ 
in the notation of the theorem.   We can therefore write the $1$-cocycle $f \colon G \to T\otimes_{\mathbb{Z}_p} \Omega/I^2$ as
\[
	f=\lambda+\lambda_x x+\lambda_y y,
\]
with $\lambda_x, \lambda_y \colon G \to T$, abbreviating the tensor product as formal multiplication.
Part a of Theorem B says that $f$ gives rise to a defining system 
$$ 
	\rho = \begin{pmatrix} \begin{matrix} 1 & \chi \\  & 1 \end{matrix} & \fblock{ \lambda_x & * \\ \lambda & \lambda_y } \\ & 
	\begin{matrix} 1& \psi \\  & 1 \end{matrix} 
	\end{pmatrix} \colon G \to U/Z
$$
for the Massey triple product $(\chi, \lambda, \psi)$.  Here, the values of $\rho$ lie in the quotient of a group $U$ of \emph{generalized} upper-triangular unipotent $4$-by-$4$ matrices by its subgroup $Z$ of matrices with zero above-diagonal entries outside of the upper right-hand corner.  The entries in the upper-right hand block are $T$-valued (and in particular $Z \cong T$), whereas they are $\Z_p$-valued outside of it.  Matrix multiplication proceeds using the $\zp$-module structure on $T$. That $\rho$ is a defining system means that $\rho \colon G \to U/Z$ is a nonabelian $1$-cocycle, where $G$ acts on $U$ coordinate-wise.  The Massey product $(\chi,\lambda,\psi)_{\rho}$ relative to the defining system $\rho$ is an element of $H^2(G,T)$ providing the obstruction to lifting $\rho$ to a nonabelian
$1$-cocycle $G \to U$.

In general, even for such a cocycle $\rho$ and therefore a Massey product $(\chi,\lambda,\psi)$ to exist, the cup products $\chi \cup \lambda$ and $\lambda \cup \psi$ must vanish in $H^2(G,T)$ so that cochains $\lambda_x$ and $\lambda_y$ can be chosen with $d\lambda_x = -\chi \cup \lambda$ and $d\lambda_y = -\lambda \cup \psi$.  Even then, the class $(\chi,\lambda,\psi)$ depends on these choices.  In our description, this vanishing is encapsulated in the fact that $f$ is a $1$-cocycle, and the indeterminacy is removed by fixing $f$.

The content of part b of Theorem B is that the coefficients of 
$\Psi^{(2)}([f])$ in $H^2(G,T)$ for the $\zp$-basis $x^2$, $xy$, and $y^2$ of $I^2/I^3$ are Massey triple products: in particular, the coefficient of $xy$ is the Massey product $(\chi,\lambda,\psi)_{\rho}$ 
for the defining system $\rho$.  More precisely, $(\chi,\lambda,\psi)_{\rho}$ is defined as the class of the $2$-cocycle $F \colon G^2 \to T$ given by
$$
	F \colon (g,h) \mapsto \chi(g) g \lambda_x(h) + \psi(h)\lambda_y(g).
$$
This cocycle $F$ arises as the upper-right hand corner of $(g,h) \mapsto \tilde{\rho}(g) \cdot g \tilde{\rho}(h)$ for 
the naive lift of $\rho$ to a cochain $\tilde{\rho} \colon G \to U$ with zero upper-right hand corner.  The theorem boils down to the fact that $F \cdot xy$ is also exactly the coboundary of the naive lift of $f$ to a cochain $G \to \zp[x,y]/(x^2,y^2)$ with 
a zero $xy$-coefficient.

From our perspective, the generalized Bockstein maps are more flexible than Massey products, being connecting homomorphisms more directly amenable to basic applications of homological algebra.  
For instance, the argument proving Theorem A for arbitrary $H$ amounts to a diagram chase for maps of Grothendieck spectral sequences.  Moreover, Theorem B allows us to study defining systems using abelian, rather than nonabelian, cocycles.

\subsection{Galois groups with restricted ramification and class groups}
\label{introclassgroups}
At its core, our work is motivated by the potential arithmetic applications. One has at least something of an understanding of class groups of cyclotomic fields through Bernoulli numbers and thereby $L$-functions, and most notably via the Iwasawa main conjecture (theorem of Mazur-Wiles).  However, little is known about $p$-adic analytic invariants describing aspects of class groups of non-CM extensions of $\Q$. 

One does have at least a partial understanding of the structure of $p$-parts of class groups of $p$-ramified $\F_p$-extensions of $\Q(\mu_p)$ through known values of cup products of cyclotomic $p$-units, and in certain instances one can give lower bounds on $p$-ranks of these groups (see \cite[Section 7]{sh-massey}).  In Section \ref{sec:cyclo}, we consider more complex extensions, deriving lower bounds on the $p$-ranks of class groups of $p$-ramified bicyclic and Heisenberg extensions of $\Q(\mu_p)$ in cases where standard genus theory does not produce any unramified extensions. The key tools in this work are Theorem A, our descriptions of the generalized Bockstein maps $\Psi^{(n)}$ for $n \in \{1,2\}$, and computations of cup products of cyclotomic units from \cite{mcs}. 

We consider the case that the class group of $\Q(\mu_p)$ has $p$-rank $1$. 
Suppose we have an $\F_p^2$-extension $K$ of $\Q(\mu_p)$ that is Galois over $\Q$ for which the cup product pairing with the Kummer cocycle of the Kummer generators of the $\F_p^2$-extension vanish. Under certain assumptions on the action of $\Gal(\Q(\mu_p)/\Q)$ on these Kummer generators, we can show that the $p$-rank of the class group of $K$ is at least $6$ (see Proposition \ref{prop:bicycbound}).  This $\F_p^2$-extension $K$ is then further contained in a Heisenberg extension $L$ of $\Q(\mu_p)$ of degree $p^3$ that is Galois over $\Q$, and the $p$-rank of its class group is at least $7$ (see Proposition \ref{prop:heisbound}). The smallest irregular prime $p$ for which there exist $\F_p^2$-extensions for which these lower bounds are shown to hold by our methods is $101$.

In \cite{reciprocity}, the results of this paper are applied in the setting of Iwasawa theory to study inverse limits of class groups.  There, $G$ is the Galois group of the maximal extension of a number field unramified outside a finite set of primes
containing those above $p$, and $H$ is the Galois group of a $\zp$-extension.  The group $H^2_\Iw(N,\Z_p(1))$ is closely related to,
but not always isomorphic to, the inverse limit $X$ of $p$-parts of class groups under norm maps in the tower of number fields defined by $H$.  The isomorphisms of Theorem A are then used to derive exact sequences describing the graded pieces in the augmentation filtration of $X$.

\subsection{Absolute Galois groups and Massey vanishing}
\label{introAbs}
Let $G$ be a profinite group and let $p$ be a prime number.  Let $\chi \in H^1(G,\F_p) = \Hom(G,\F_p)$.  Consider the sequence
\begin{equation}
\label{eq:cor cup res}
H^1(\ker \chi, \F_p) \xrightarrow{\mathrm{cor}} H^1(G,\F_p) \xrightarrow{\chi\,\cup} H^2(G,\F_p) \xrightarrow{\mathrm{res}} H^2(\ker \chi,\F_p).
\end{equation}
If $G=G_F$ is the absolute Galois group of a field $F$ that contains a primitive $p$th root of unity, then this sequence is exact, as can be seen using the properties of the norm residue symbol. This exactness is an important property of absolute Galois groups: for example, it is used heavily in the proof of the norm residue isomorphism theorem (see \cite{voevodsky}). 

Using Theorem B, we show that:
\begin{itemize}
\item[(i)]  The sequence \eqref{eq:cor cup res} is exact at $H^1(G,\F_p)$ if and only if all $p$-fold Massey products of the form $(\chi^{(p-1)},\lambda)$ with $\chi \cup \lambda=0$ vanish for some proper defining system.
\item[(ii)] If \eqref{eq:cor cup res} is exact at $H^2(G,\F_p)$, then it is exact.
\end{itemize}
In light of (i),  we say that a group $G$ has the \emph{$p$-cyclic Massey vanishing property} if the sequence \eqref{eq:cor cup res} is exact at $H^1(G,\F_p)$ for every $\chi \in H^1(G,\F_p)$.  We prove the following in Theorem \ref{thm:triple vanishing}. 

\begin{thmC}
Let $G$ be a profinite group with the $p$-cyclic Massey vanishing property for an odd prime $p$. Then every Massey triple product on $H^1(G,\F_p)$ which is defined contains zero.
\end{thmC}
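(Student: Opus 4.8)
The plan is to realize the triple product $(a,b,c)$ as the middle coefficient of a single generalized Bockstein map for $H\cong\zp^2$, whose two neighboring coefficients are cyclic Massey products, and then to extract a third cyclic product by pushing forward along the ``sum'' quotient $\zp^2\to\zp$. The cyclic Massey vanishing property controls all three cyclic products, and a careful accounting of indeterminacies forces the value of $(a,b,c)$ into the indeterminacy subgroup, which is exactly what it means to contain zero.

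Concretely, suppose $(a,b,c)$ is defined, so that $a\cup b=0$ and $b\cup c=0$. Take $T=\F_p$ and let $H\cong\zp^2$ be the quotient of $G$ corresponding to a choice of $\zp$-valued lifts of $a$ and $c$, with coordinate characters $\chi=a$, $\psi=c$ and middle class $\lambda=b$ (the $\F_p$-valued Massey products depend only on the reductions, so the choice of lift is immaterial). By Theorem B with $n=2$, any $1$-cocycle $f=\lambda+\lambda_x x+\lambda_y y$ with $d\lambda_x=-a\cup b$ and $d\lambda_y=-b\cup c$ gives a proper defining system with
\[
	\Psi^{(2)}([f])=(a,a,b)\otimes x^2+(a,b,c)\otimes xy+(b,c,c)\otimes y^2 .
\]
Thus $(a,b,c)$ is the $xy$-coefficient, while the $x^2$- and $y^2$-coefficients are the cyclic products $(a,a,b)$ and $(b,c,c)$ (the latter equal, up to reversal, to $(c,c,b)$), each attached to a single character and depending only on $\lambda_x$, respectively $\lambda_y$. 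Since these proper systems are in particular defining systems for $(a,b,c)$, whose full indeterminacy is $J:=a\cup H^1(G,\F_p)+c\cup H^1(G,\F_p)$, it suffices to place one value $(a,b,c)_f$ inside $J$.

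The key device is functoriality of the generalized Bockstein maps under the quotient $\zp^2\to\zp$, $x,y\mapsto s$, which sends $x^2,xy,y^2\mapsto s^2$ and carries $f$ to the procyclic cocycle $\bar f=\lambda+(\lambda_x+\lambda_y)s$ for the character $\xi:=a+c$. Comparing $s^2$-coefficients yields the cohomological identity $(a,a,b)_f+(a,b,c)_f+(b,c,c)_f=(\xi,\xi,b)_{\bar f}$, valid for this single $f$. I would now invoke the cyclic Massey vanishing property to choose $\lambda_x$ so that $(a,a,b)_f=0$ and $\lambda_y$ so that $(b,c,c)_f=0$; note that $(\xi,\xi,b)$ is defined since $\xi\cup b=(a+c)\cup b=a\cup b+c\cup b=0$. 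The identity then collapses to $(a,b,c)_f=(\xi,\xi,b)_{\bar f}$.

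Finally I would pin down the indeterminacy. Because $\bar f$ is a \emph{proper} defining system coming from the procyclic Bockstein map, varying it moves the value of $(\xi,\xi,b)$ only within $\xi\cup H^1(G,\F_p)=(a+c)\cup H^1(G,\F_p)$; and the cyclic Massey vanishing property guarantees that some proper system makes $(\xi,\xi,b)$ vanish. Hence every proper value, in particular $(\xi,\xi,b)_{\bar f}$, lies in $(a+c)\cup H^1(G,\F_p)\subseteq J$. Therefore $(a,b,c)_f\in J$, so the coset $(a,b,c)=(a,b,c)_f+J$ equals $J$ and contains $0$. I expect the main obstacle to be precisely this last bookkeeping: the cyclic product $(\xi,\xi,b)$ a priori carries the larger indeterminacy $(a+c)\cup H^1(G,\F_p)+H^1(G,\F_p)\cup b$, and it is essential that the reduced indeterminacy of proper defining systems confines the auxiliary value to $(a+c)\cup H^1(G,\F_p)$, which sits inside $J$; establishing the functoriality identity at the level of cocycles, rather than merely the compatibility of the abstract maps, is the other point that will require care.
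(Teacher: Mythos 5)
Your proposal is sound in its main mechanism and reaches the same endgame as the paper, but it is organized genuinely differently, and it has one real gap as written. First the comparison. The paper never leaves the single group ring $\Omega=\F_p[H]$ for $H$ the coimage of $(\chi,\psi)\colon G\to\F_p^2$: it introduces the intermediate ideal $J=I^3+xy\Omega$, uses cyclic Massey vanishing for the three characters $\chi$, $\psi$, $\chi+\psi$ to kill the obstruction to lifting $\lambda$ to $H^1(G,\Omega/J)$, and then identifies the residual obstruction in $H^2(G,J/I^3)\cong H^2(G,\F_p)$ as lying in $\image((\chi+\psi)\cup)$ by a long exact sequence whose connecting map is computed by Proposition \ref{prop:abelian}. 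You instead stay at the level of $\Omega/I^3$ and replace the ideal $J$ and the two diagram chases by a four-term Massey identity: expand $\Psi^{(2)}([f])$ by Theorem \ref{bicyc_image}, push forward along the ring map induced by the sum character $\xi=a+c$ (so $x,y\mapsto s$, all three degree-two monomials to $s^2$), and apply Theorem \ref{cyclic_case} on the target. Your subsequent steps check out: the $x^2$- and $y^2$-coefficients depend only on $(\lambda,\lambda_x)$ and $(\lambda,\lambda_y)$ respectively, and the cocycle conditions $d\lambda_x=-a\cup b$, $d\lambda_y=-\psi\cup\lambda$ are decoupled, so the two outer terms can be killed independently; and the proper values of $(\xi,\xi,b)$ do form a coset of $\xi\cup H^1(G,\F_p)$ (from the explicit formula $(\xi,\xi,b)_{b+\mu s}=\xi\cup\mu+\binmat{\xi}{2}_{1,3}\!\cup b$, differences of admissible $\mu$ being cocycles), which contains $0$ by cyclic vanishing for $\xi$. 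Note that both of your outer-term uses of the property are its ``weaker form'': Definition \ref{defn:cyclic massey vanishing} concerns $p$-fold products, and for $p>3$ you must pass through Proposition \ref{pcyclic} and the resulting lift modulo $I^3$, exactly as the paper's parenthetical does. In effect your coset bookkeeping for $(\xi,\xi,b)$ is the cocycle-level shadow of the paper's exactness statement $\ker(h)=\image(\alpha_3\,\cup)$; the ingredients (cyclic vanishing for exactly $a$, $c$, $a+c$) are identical, and what your packaging buys is a transparent linear identity among four Massey products, at the cost of needing the outer binomial systems $\binmat{\chi}{2},\binmat{\psi}{2}$, which the paper's route avoids (it uses only the $(1,1)$-system).

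The genuine gap is your opening move: $\zp$-valued lifts of $a$ and $c$ need not exist for an arbitrary profinite group (already a finite $G$ admits none for a nontrivial character), and the theorem's hypothesis gives you no such lifting. Your parenthetical addresses only independence of the choice of lift, not existence. The repair stays inside your framework: take $H$ to be the coimage of $(a,c)\colon G\to\F_p^2$ and run the pro-bicyclic machinery with $A=B=R=\F_p$; the size hypothesis of Theorem \ref{bicyc_image} at the extreme pairs $(2,0)$ and $(0,2)$ is $2|R|<p|A|$, i.e.\ $2p<p^2$, so this works precisely for odd $p$. For $p=2$ your mechanism degenerates irreparably ($x^2=y^2=s^2=0$ in the mod-$2$ group rings, so there are no outer coefficients and the pushforward identity is vacuous) --- though the paper's own proof carries the same implicit restriction, since for $p=2$ its identification $J/I^3\cong I_3^2/I_3^3$ fails ($I_3^2=0$ while $xy\neq 0$). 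Two smaller omissions: you must dispose separately of the case where $a$ and $c$ are linearly dependent (one line from cyclic vanishing, as the paper notes before its proof); and your appeal to ``reversal'' for $(b,c,c)$ is better avoided --- project instead along $x\mapsto 0$, $y\mapsto x_2$ to identify $(b,c,c)_{\rho_{y^2}}$ with the procyclic obstruction $(c,c,b)$ for the character $c$, which is what the vanishing property directly controls, so no general reversal lemma for proper defining systems is needed.
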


If $F$ is a field containing a primitive $p$th root of unity, then its absolute Galois group $G_F$ has the $p$-cyclic Massey vanishing property.  Hence Theorem C implies that every Massey triple product on $H^1(G_F,\F_p)$ which is defined contains zero.  This is the \emph{triple Massey vanishing} theorem of Efrat--Matzri \cite{em} and Min\'a\v{c}--T\^an \cite{mt2} for odd $p$ (which implies the vanishing for arbitrary fields as in the latter paper). For more discussion about absolute Galois groups and the general Massey vanishing conjecture of \cite{mt1}, see the introduction to Section \ref{masseyvanish}.

In our proof of Theorem C, to show that a defined Massey product $(\chi,\lambda,\psi)$ vanishes, we consider the coimage $H$ of the map $(\chi,\psi) \colon G \to \F_p^2$, let $\Omega=\F_p[H]$, and let $I \subset \Omega$ be the augmentation ideal. We then apply a variant of Theorem B to this $H$ to see that the Massey product $(\chi,\lambda,\psi)$ relative to a certain defining system is the obstruction to lifting $\lambda$ to a class in $H^1(G,\Omega/J)$ for a particular ideal $J$ between $I^2$ and $I^3$. 
Via an involved diagram chase, we see that the $p$-cyclic Massey vanishing property for the quotients of $H$ that are the coimages of $\chi$, $\psi$, and $\chi+\psi$ implies that this obstruction equals $\nu \cup (\chi+\psi)$ for some $\nu \in H^1(G,\F_p)$. This is enough to show that the Massey product contains zero.

Theorem C raises several interesting questions that we do not attempt to address here, including whether or not the vanishing of Massey products $(\chi^{(n)},\psi)$ for arbitrary $n$ is sufficient to imply Massey vanishing. 

\section{Generalized Bockstein maps}
\label{sec:bock} 
In this section, we define generalized Bockstein maps and employ them in the study of the structure of inverse limits of cohomology groups.  Throughout the paper, we use the following
objects:
\begin{itemize}
	\item a prime number $p$,
	\item a profinite group $G$,
	\item a topologically finitely generated pro-$p$ quotient $H$ of $G$ by a closed normal subgroup $N$,
	\item a compact Noetherian $\zp$-algebra $R$ (usually taken to be a quotient of $\zp$),
	\item the completed group ring $\Omega = R\ps{H}$,
	\item the augmentation ideal $I$ of $\Omega$, i.e., the kernel of the continuous $R$-algebra homomorphism 
	$\Omega \to R$ that sends every group element in $H$ to $1$,
	\item a positive integer $n$ such that $\Omega/I^n$ and $I^n/I^{n+1}$ are $R$-flat, and
	\item a compact $R\ps{G}$-module $T$ that is $R$-finitely generated.
\end{itemize}

Note that a compact $R\ps{G}$-module is the same as a compact $R$-module with a continuous $R$-linear action of $G$.  We will frequently take tensor products $M \otimes_R M'$ of compact $R\ps{G}$-modules $M$ and $M'$, at least one of which is finitely generated over $R$.  These compact $R$-modules (with the topology of the isomorphic completed tensor product) have the diagonal action of $G$.

We are concerned in this paper with the continuous cohomology groups $H^i(G,M)$ of compact $R\ps{G}$-modules $M$ for $i \ge 0$. In particular, $G$-cochains are implicitly supposed to be continuous. We use use square brackets to denote both classes of cocycles and group elements in completed group algebras, and we denote an element in a module and its coset in a quotient thereof by the same symbol where the context is clear.

\subsection{Augmentation sequences}

Since we have assumed that $\Omega/I^n$ is $R$-flat, the right exact sequence of compact $R\ps{G}$-modules
\begin{equation} \label{bocksteinT}
	0 \to 	T \otimes_R I^n/I^{n+1} \to T \otimes_R \Omega/I^{n+1}  \to T \otimes_R \Omega/I^n  \to 0
\end{equation} 
is exact.
For any $d \ge 1$, we have the resulting connecting homomorphisms 
\[
	H^{d-1}(G,T \otimes_R \Omega/I^n) \to H^d(G,T \otimes_R I^n/I^{n+1})
\]
on continuous $G$-cohomology.

Since $G$ acts trivially on the finitely generated $R$-module $I^n/I^{n+1}$, we have a homomorphism
\begin{equation} \label{moveout}
 H^d(G,T) \otimes_R I^n/I^{n+1} \to H^d(G,T \otimes_R I^n/I^{n+1})
\end{equation}
that is an isomorphism as $I^n/I^{n+1}$ is $R$-flat, so long as we assume either that $G$ has finite $p$-cohomological
dimension or that $I^n/I^{n+1}$ has a finite resolution by projective $R$-modules (see \cite[Proposition 3.1.3]{ls}, the proof of which
does not use the assumption on $R$ in that section).  The latter condition is automatic, given that $I^n/I^{n+1}$ is flat, 
if $R$ is a quotient of $\zp$.
We let
\begin{equation} \label{Psin}
	\Psi^{(n)} \colon H^{d-1}(G,T \otimes_R \Omega/I^n) \to H^d(G,T) \otimes_R I^n/I^{n+1}
\end{equation}
denote the resulting composite map, and we refer to it as a \emph{generalized Bockstein map}.  

\begin{remark}
	We may replace the assumption that $\Omega/I^n$ is $R$-flat with the assumption that $T$ is $R$-flat
	in order that \eqref{bocksteinT} still holds.  We may also replace the assumption that $I^n/I^{n+1}$ is $R$-flat with the 
	assumption that $G$ has $p$-cohomological dimension $d$ and still have 
	an isomorphism as in \eqref{moveout}.  (To see this, choose a presentation of $I^n/I^{n+1}$ by finitely generated free
	$R$-modules and use the right exactness of the $d$th cohomology functor and the tensor product, noting that $H^d(G,T^r) \cong
	H^d(G,T) \otimes_R R^r$ for any $r$.)  
	With either replacement, $\Psi^{(n)}$ is still a map as in \eqref{Psin}.
\end{remark}

\subsection{Graded quotients of Iwasawa cohomology groups} \label{descent}

Recall that $N$ denotes the kernel of the surjection $G \to H$.  Our interest in this section is in the \emph{Iwasawa cohomology groups}
$$
	H^i_{\Iw}(N,T) = \varprojlim_{N \leq U \unlhd^o G} H^i(U,T)
$$
for $i \ge 1$,
where the inverse limit is taken with respect to corestriction maps over open normal subgroups $U$ of $G$ containing $N$.  
Note that the Iwasawa cohomology groups are relative to the larger group $G$, though this is omitted from our notation.
Since each $H^i(U,T)$ is a $R[G/U]$-module and the actions are compatible with corestriction, 
the group $H^i_{\Iw}(N,T)$ is endowed with the structure of an $\Omega$-module.  

\begin{remark}
	If $H$ is finite, then $H^i_{\Iw}(N,T) = H^i(N,T)$.
\end{remark}

Let us define two notions that we need.  First, a profinite group $\mc{G}$ is \emph{$p$-cohomologically finite} if $\mc{G}$ has finite $p$-cohomological dimension and  $H^i(\mc{G},M)$ is finite for every finite $\zp[\mc{G}]$-module $M$ and $i \ge 0$. 
Second, a \emph{compact $p$-adic Lie group} is a profinite group that has an open pro-$p$ subgroup, any closed subgroup of which can be topologically generated by $r$ elements for some fixed $r$.  Equivalently, a compact $p$-adic Lie group is any profinite group continuously isomorphic to a closed subgroup of $\GL_n(\zp)$ for some $n \ge 1$.

We make the following assumptions for the rest of this section:
\begin{itemize}
	\item $G$ is $p$-cohomologically finite of $p$-cohomological dimension $d$,
	\item $R$ is a complete commutative local Noetherian $\zp$-algebra with finite residue field, and
	\item either 
	\begin{enumerate}
		\item[(i)] $H$ is a compact $p$-adic Lie group or 
		\item[(ii)] $T$ has a finite resolution by a complex of $R\ps{G}$-modules free 
		of finite rank over $R$.
	\end{enumerate}
\end{itemize}

Recall that the zeroth $H$-homology group of a compact $\Omega$-module $M$ is its coinvariant module $M_H \cong M/IM$.  
In our setting, corestriction provides an isomorphism on coinvariants in degree $d$ (see \cite[Proposition 3.3.11]{nsw}),
which is to say that we have a natural isomorphism
\begin{equation} \label{coinvisom}
	\frac{H^d_{\Iw}(N,T)}{IH^d_{\Iw}(N,T)} \cong H^d(G,T).
\end{equation}
This gives rise to a Grothendieck spectral sequence for the implicit composition of right exact functors, which is a version of Tate's descent spectral sequence for Iwasawa cohomology.

\begin{proposition}[Fukaya-Kato, Lim-Sharifi] \label{spectral}
	The $\Omega$-modules $H^i_{\Iw}(N,T)$ are finitely generated for all $i \ge 0$.
	Moreover, we have a first quadrant homological spectral sequence of $R$-modules
	$$
		E^2_{i,j}(T) = H_i(H,H^{d-j}_{\Iw}(N,T)) \Rightarrow E_{i+j}(T) = H^{d-i-j}(G,T),
	$$
	where $d$ is the $p$-cohomological dimension of $G$.
\end{proposition}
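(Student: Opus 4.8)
The plan is to present the spectral sequence as the Grothendieck spectral sequence of a composition of two right exact functors, with the composite identified through \eqref{coinvisom}, and to read off finite generation from the perfectness of the underlying Iwasawa-cohomology complex over $\Omega$. On the category of compact $R\ps{G}$-modules finitely generated over $R$, the first functor is the top Iwasawa cohomology $F = H^d_{\Iw}(N,-)$. Since $G$ has $p$-cohomological dimension $d$, every short exact sequence of coefficients yields a long exact sequence of Iwasawa cohomology terminating in degree $d$, so $F$ is right exact, and the shape of that long exact sequence identifies its left derived functors with the lower Iwasawa cohomology, $L_j F(T) \cong H^{d-j}_{\Iw}(N,T)$. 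The second functor is the $H$-coinvariants functor $(-)_H = R \otimes_\Omega (-)$ on compact $\Omega$-modules, which is right exact with $L_i\bigl((-)_H\bigr) = H_i(H,-) = \Tor_i^{\Omega}(R,-)$. By \eqref{coinvisom} the composite is $(-)_H \circ F \cong H^d(G,-)$, whose left derived functors are in turn $H^{d-k}(G,-)$, again by right exactness in the top degree. Granting the acyclicity condition needed for the Grothendieck machine, this produces
$$E^2_{i,j} = H_i(H, H^{d-j}_{\Iw}(N,T)) \Rightarrow H^{d-i-j}(G,T),$$
which is exactly the asserted first-quadrant homological spectral sequence.

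To put this on firm footing and to obtain finite generation, I would pass to the derived category and use the limit form of Shapiro's lemma, $\RG_{\Iw}(N,T) \cong \RG(G, T \cotimes{R} \Omega)$, where $\Omega$ carries the $G$-action through $G \twoheadrightarrow H$. The two alternative hypotheses of the section, namely that $H$ is a compact $p$-adic Lie group or that $T$ admits a finite free $R\ps{G}$-resolution, are precisely what guarantee that this complex is perfect over the Noetherian ring $\Omega$, i.e. quasi-isomorphic to a bounded complex of finitely generated projective $\Omega$-modules; its cohomology $H^i_{\Iw}(N,T)$ is then finitely generated over $\Omega$, giving the first assertion. Perfectness also supplies the projection formula
$$R \otimes^{\Li}_{\Omega} \RG(G, T \cotimes{R} \Omega) \cong \RG\bigl(G, R \otimes^{\Li}_{\Omega} (T \cotimes{R} \Omega)\bigr) \cong \RG(G,T),$$
using $R \otimes^{\Li}_{\Omega} \Omega \cong R$. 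Consequently the hyper-$\Tor$ spectral sequence for $R \otimes^{\Li}_{\Omega} \RG_{\Iw}(N,T)$, which exists for any complex of $\Omega$-modules via a Cartan--Eilenberg resolution and so needs no separate acyclicity input, has $E^2$-terms $\Tor_i^{\Omega}(R, H^{d-j}_{\Iw}(N,T))$ and abutment $H^{d-i-j}(G,T)$; after reindexing it coincides with the Grothendieck spectral sequence above and simultaneously verifies its acyclicity hypothesis.

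The step I expect to be the main obstacle is exactly this derived-category backbone. The limit form of Shapiro's lemma requires interchanging the inverse limit defining $H^{*}_{\Iw}$ with continuous cohomology, which forces one to control the relevant $\varprojlim^1$ terms and relies on $G$ being $p$-cohomologically finite together with compactness of the coefficients; and perfectness of $\RG(G, T \cotimes{R} \Omega)$ over $\Omega$ is a genuine finiteness theorem for which the dichotomy between the two hypotheses is essential. Since both inputs are established in \cite{fk} and \cite{ls}, I would cite them for the identification and perfectness, and present only the Grothendieck-spectral-sequence derivation, taking care to pin down the homological indexing and to identify the edge term $E^2_{0,0} = H^d_{\Iw}(N,T)_H \cong H^d(G,T)$ via \eqref{coinvisom}.
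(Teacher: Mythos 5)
Your proposal is correct and follows essentially the same route as the paper, which itself gives no independent argument but simply cites \cite[Theorem 1]{tate} for finite $H$, \cite[Proposition 1.6.5]{fk} under hypothesis (ii), and \cite[Propositions 3.1.3 and 3.2.4]{ls} under hypothesis (i) --- precisely the sources you invoke for the Shapiro-type identification $\RG_{\Iw}(N,T) \cong \RG(G, T \cotimes{R} \Omega)$ and for perfectness over $\Omega$, with your Grothendieck/hyper-$\Tor$ framing matching the paper's own description of the sequence as the Grothendieck spectral sequence of the composite $H_0(H,\sdot) \circ H^d_{\Iw}(N,\sdot)$. Your reconstruction of what those references prove (projection formula, finite generation from perfectness, identification of the two spectral sequences) is sound, and you rightly flag that the loose first-paragraph identification of $L_jF$ with $H^{d-j}_{\Iw}(N,\sdot)$ is superseded by the hyper-$\Tor$ argument.
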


This result is proven in \cite[Theorem 1]{tate} if $H$ is finite, and it follows from \cite[Proposition 1.6.5]{fk} if (ii) holds and from \cite[Propositions 3.1.3 and 3.2.4]{ls} if (i) holds. 

The isomorphism \eqref{coinvisom} and the other edge maps on coinvariant groups in this spectral sequence are given by the inverse limits of corestriction maps.  This isomorphism forces the $n$th graded quotient $I^nA/I^{n+1}A$ in the augmentation filtration of $A = H^d_{\Iw}(N,T)$ to be a quotient of $H^d(G,T) \otimes_R I^n/I^{n+1}$ using the surjective  map
$$
	A/IA \otimes_R I^n/I^{n+1} \to I^nA/I^{n+1}A
$$
induced by the map $A \times I^n \to I^nA$ given by the multiplication $(a,x) \mapsto xa$.
As we shall see, this quotient is in fact $\coker \Psi^{(n)}$.

Recall that we have assumed that $\Omega/I^n$ is $R$-flat.  Moreover, the fact that $H$
is topologically finitely generated implies that $\Omega/I^n$ is finitely generated over $R$.

\begin{lemma} \label{bocklemma}
	Let $A$ be an $\Omega$-module, and consider the exact sequence
	\begin{equation} \label{bockstein}
		0 \to A \otimes_R I^n/I^{n+1} \to 
		A \otimes_R \Omega/I^{n+1} \to 
		A \otimes_R \Omega/I^n  \to 0.
	\end{equation} 
	The connecting homomorphism
	$$
		\partial_n \colon H_1(H,A \otimes_R \Omega/I^n) \to A_H \otimes_R I^n/I^{n+1}
	$$
	in the $H$-homology of \eqref{bockstein}
	has cokernel isomorphic to $I^n A/I^{n+1} A$. 
\end{lemma}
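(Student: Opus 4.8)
The plan is to read off the cokernel from the long exact sequence in $H$-homology attached to \eqref{bockstein}, after computing the zeroth homology groups explicitly. Writing $\alpha$ and $\beta$ for the maps induced on $H_0(H,-)$ by the two maps of \eqref{bockstein}, and using that $H$ acts trivially on $I^n/I^{n+1}$ (since $(h-1)I^n \subseteq I^{n+1}$) to identify $H_0(H, A \otimes_R I^n/I^{n+1}) = A_H \otimes_R I^n/I^{n+1}$, the tail of the sequence reads
$$
	H_1(H, A \otimes_R \Omega/I^n) \xrightarrow{\partial_n} (A \otimes_R I^n/I^{n+1})_H \xrightarrow{\alpha} (A \otimes_R \Omega/I^{n+1})_H \xrightarrow{\beta} (A \otimes_R \Omega/I^n)_H \to 0.
$$
By exactness $\coker \partial_n = \image \alpha = \ker \beta$, so it suffices to compute the groups $(A \otimes_R \Omega/I^m)_H$ together with the map $\beta$ between them.

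The key computation is a natural isomorphism $(A \otimes_R \Omega/I^m)_H \cong A/I^m A$. I would first treat all of $\Omega$: consider the $R$-linear map $a \otimes \omega \mapsto S(\omega)\,a$, where $S$ is the antipode of $\Omega$ determined by $S(h) = h^{-1}$ for $h \in H$. This descends to coinvariants, because the diagonal action sends $g(a \otimes \omega) = ga \otimes g\omega \mapsto S(g\omega)(ga) = S(\omega)\,S(g)\,g\,a = S(\omega)\,a$, using that $S$ is an anti-automorphism and $S(g)g = g^{-1}g = 1$. One then checks, via the relation $[a \otimes h] = [h^{-1}a \otimes 1]$ in coinvariants, that the induced map $(A \otimes_R \Omega)_H \to A$ is an isomorphism with inverse $a \mapsto [a \otimes 1]$. (Equivalently, this is the untwisting isomorphism carrying the diagonal action to the action on the $\Omega$-factor alone, followed by $A \otimes_R \Omega_H \cong A \otimes_R R = A$.) Since coinvariants are right exact and $S$ preserves $I$, so that $S(I^m) = I^m$, the same map carries the image of $(A \otimes_R I^m)_H$ onto $I^m A$; passing to the quotient then yields $(A \otimes_R \Omega/I^m)_H \cong A/I^m A$.

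Finally I would verify naturality: these isomorphisms are compatible with the projections $\Omega/I^{n+1} \to \Omega/I^n$, so that $\beta$ becomes the canonical surjection $A/I^{n+1}A \to A/I^n A$, whose kernel is $I^n A/I^{n+1}A$. Combined with $\coker \partial_n = \ker \beta$, this gives the asserted isomorphism. The step I expect to require the most care is the coinvariant computation: the diagonal $H$-action does not simply disappear on the $\Omega$-factor, and the naive evaluation $a \otimes \omega \mapsto \omega a$ fails to factor through coinvariants; inserting the antipode (equivalently, untwisting the diagonal action) is exactly what repairs this and produces the clean identification $A/I^m A$. In the compact setting the tensor products are completed and all the maps in sight are continuous, so these identifications remain valid as stated.
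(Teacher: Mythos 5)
Your proposal is correct and takes essentially the same route as the paper: the paper likewise reads off $\coker \partial_n$ from the long exact sequence in $H$-homology and identifies $(A \otimes_R \Omega/I^m)_H \cong A/I^mA$ via the inversion map $\iota$ (your antipode $S$), factoring through $\Omega/I^m \otimes_\Omega A$ rather than writing the map directly, which is only a cosmetic difference. Your added checks (descent to coinvariants, $S(I^m)=I^m$, naturality in $m$) are exactly the details the paper leaves implicit.
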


\begin{proof}
	We have compatible, natural isomorphisms of $R$-modules 
	$$
		(A \otimes_R \Omega/I^m)_H \cong \Omega/I^m \otimes_{\Omega} A \cong A/I^mA
	$$
	for $m \ge 1$ given on $a \in A$ and $\omega \in \Omega$ (or its quotient by $I^m$) by 
	$$
		a \otimes \omega \mapsto \iota(\omega) \otimes a \mapsto \iota(\omega)a,
	$$
	where $\iota \colon \Omega \to \Omega$ is
	the unique continuous $R$-linear map given by inversion of group elements on $H$.
	Note that the switch of terms in the tensor product in the first isomorphism is necessitated by the fact that $A$ is a left $\Omega$-module.
	(In fact, these become isomorphisms of $\Omega$-modules since
	$a \otimes \omega h^{-1} \mapsto h \cdot \iota(\omega) \otimes a$ for $h \in H$
	under the first map.)
	
	By the long exact sequence in $H$-homology and the above isomorphisms, the cokernel of interest is 
	identified with the kernel of the quotient map $A/I^{n+1}A \to A/I^nA$, hence the result.
\end{proof}

We now come to our theorem.

\begin{theorem} \label{augfilt}
For each $n \ge 1$, there is a canonical isomorphism
$$
	\frac{I^n H^d_{\Iw}(N,T)}{I^{n+1} H^d_{\Iw}(N,T)} \cong 
	\frac{H^d(G,T) \otimes_R I^n/I^{n+1}}{\image \Psi^{(n)}}
$$
of $R$-modules, where $d$ is the $p$-cohomological dimension of $G$.
\end{theorem}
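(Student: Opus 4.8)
The plan is to realize both sides of the claimed isomorphism as the cokernel of a connecting homomorphism, and then to match these two connecting maps through the descent spectral sequence of Proposition \ref{spectral}. Write $A = H^d_\Iw(N,T)$. Applying Lemma \ref{bocklemma} to this particular $\Omega$-module $A$ exhibits the left-hand side $I^nA/I^{n+1}A$ as the cokernel of the $H$-homology connecting map
$$
\partial_n\colon H_1(H, A\otimes_R\Omega/I^n)\to A_H\otimes_R I^n/I^{n+1}
$$
of \eqref{bockstein}, whose target is $H^d(G,T)\otimes_R I^n/I^{n+1}$ by the coinvariant isomorphism \eqref{coinvisom}. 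On the other hand, since \eqref{moveout} is an isomorphism under our hypotheses, the right-hand side $\coker\Psi^{(n)}$ equals the cokernel of the $G$-cohomology connecting homomorphism $\delta\colon H^{d-1}(G, T\otimes_R\Omega/I^n)\to H^d(G, T\otimes_R I^n/I^{n+1})$ of \eqref{bocksteinT}. Thus it suffices to prove that, after identifying the two targets, $\partial_n$ and $\delta$ have the same image.

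I would first record the two relevant long exact sequences. From the top of the $G$-cohomology long exact sequence of \eqref{bocksteinT} (using that $G$ has $p$-cohomological dimension $d$, so $H^{d+1}(G,-)=0$) one gets $\image\delta = \ker\bigl(i_*\colon H^d(G,T\otimes_R I^n/I^{n+1})\to H^d(G,T\otimes_R\Omega/I^{n+1})\bigr)$, while $\coker\delta = \ker\bigl(q_*\colon H^d(G,T\otimes_R\Omega/I^{n+1})\to H^d(G,T\otimes_R\Omega/I^n)\bigr)$. The $H$-homology long exact sequence of \eqref{bockstein} gives the parallel statements $\image\partial_n = \ker(i_H)$ and, consistently with Lemma \ref{bocklemma}, $\coker\partial_n = \ker(q_H) = I^nA/I^{n+1}A$, where $i_H,q_H$ are the maps on $H$-coinvariants induced by the inclusion and projection of \eqref{bockstein}. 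The heart of the argument is then to identify this $H$-coinvariant sequence with the top-degree row of the $G$-cohomology sequence. For this I apply Proposition \ref{spectral} to each coefficient module in \eqref{bocksteinT}: its bottom edge map furnishes natural isomorphisms $H^d(G,M)\cong H^d_\Iw(N,M)_H$ (generalizing \eqref{coinvisom}, cf.\ \cite[Proposition 3.3.11]{nsw}) for $M$ equal to $T\otimes_R\Omega/I^n$, $T\otimes_R\Omega/I^{n+1}$, and $T\otimes_R I^n/I^{n+1}$. Combined with a \emph{projection formula} $H^d_\Iw(N, T\otimes_R P)\cong A\otimes_R P$ for $P=\Omega/I^m$ and $P=I^n/I^{n+1}$, this turns the top-degree Iwasawa-cohomology long exact sequence of \eqref{bocksteinT} into the short exact sequence \eqref{bockstein} for $A$, and hence, after taking $H$-coinvariants, matches $i_*$ with $i_H$ and $q_*$ with $q_H$. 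In particular $\ker i_* = \ker i_H$, so $\image\delta = \image\partial_n$, and therefore $\coker\Psi^{(n)}\cong\coker\partial_n\cong I^nA/I^{n+1}A$.

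The main obstacle is the projection formula together with its naturality in $P$. I expect its proof to run through the identification $\RG_\Iw(N,-)\cong\RG(G, -\cotimes{R}\Omega^\iota)$ of \cite{fk, ls}, reducing the claim to a \emph{shearing} (untwisting) isomorphism for the Hopf algebra $\Omega$ that trivializes the diagonal $H$-action on $\Omega/I^m\cotimes{R}\Omega^\iota$; the trivial-action module $I^n/I^{n+1}$ is the easy case, where $R$-freeness makes the formula immediate. One must carry the involution $\iota$ throughout, exactly as in the proof of Lemma \ref{bocklemma}. The delicate point is not the abstract isomorphism on individual terms but its \emph{compatibility with the maps} of \eqref{bocksteinT}, namely the inclusion $I^n/I^{n+1}\hookrightarrow\Omega/I^{n+1}$ and the projection $\Omega/I^{n+1}\to\Omega/I^n$: the shearing operates differently on a trivial-action module than on $\Omega/I^m$, so verifying that the resulting square of edge maps commutes is what makes the identification of $i_*,q_*$ with $i_H,q_H$ rigorous. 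Finally, the finiteness hypotheses — the $p$-cohomological finiteness of $G$ and conditions (i)/(ii) — are precisely what guarantee that the inverse limits defining Iwasawa cohomology commute with the functors in play, that Proposition \ref{spectral} applies to the twisted coefficient modules, and that \eqref{moveout} is an isomorphism.
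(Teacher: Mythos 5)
Your proof is correct, and it shares the paper's main ingredients -- the projection formula $H^d_{\Iw}(N,T\otimes_R M)\cong H^d_{\Iw}(N,T)\otimes_R M$, the resulting short exact sequence of top-degree Iwasawa cohomology groups, Lemma \ref{bocklemma}, and the edge identification \eqref{coinvisom} -- but the mechanism of the key comparison is genuinely different. The paper compares the two connecting homomorphisms $\Psi^{(n)}$ and $\partial^{(n)}$ directly, via the commutative square \eqref{keysquare}; this forces it to prove that edge maps of homological Grothendieck spectral sequences are compatible with connecting morphisms (Lemma \ref{commsquare}) and to relate $H^{d-1}(G,\sdot)$ to the derived functor $L_1(F\circ F')$ (Lemma \ref{dontneedderived}), keeping only surjectivity on the left of the square. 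You instead never compare the connecting maps themselves: you compute $\image\delta=\ker i_*$ and $\image\partial_n=\ker i_H$ from the two long exact sequences and match $i_*$ with $i_H$ using the degree-$d$ edge isomorphisms $H^d(G,M)\cong H^d_{\Iw}(N,M)_H$, whose naturality is needed only for honest module maps (the inclusion and projection in \eqref{bocksteinT}), not for connecting morphisms. This buys you a cleaner argument that avoids both appendix lemmas entirely -- note in fact that in total degree zero the spectral sequence of Proposition \ref{spectral} degenerates, so the edge isomorphism and its naturality in the coefficient module are immediate. The price, which you correctly identify as the main obstacle, is the naturality (and $\Omega$-linearity, with the involution $\iota$ handled as in Lemma \ref{bocklemma}) of the projection formula with respect to the maps of \eqref{bocksteinT}; the paper asserts this formula in a single line, and your shearing sketch via $\RG_{\Iw}(N,\sdot)\cong\RG(G,\sdot\cotimes{R}\Omega^\iota)$ is the right way to justify it. Indeed your restriction to $P=\Omega/I^m$ and $P=I^n/I^{n+1}$ is apt: the shearing isomorphism needs the $G$-action on the coefficient module to factor through $H$, so the paper's phrase ``any compact $R\ps{G}$-module $M$'' should be read with that restriction, which your modules satisfy. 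Two small slips, neither damaging: the identities $\image\delta=\ker i_*$ and $\coker\delta=\ker q_*$ follow from exactness alone, with $\mathrm{cd}_p(G)=d$ entering rather through the projection formula, the isomorphism \eqref{moveout}, and the vanishing of $H^{d+1}_{\Iw}$ that makes the top-degree sequence short exact; and at the end one should observe that $(A\otimes_R I^n/I^{n+1})_H=A_H\otimes_R I^n/I^{n+1}$ because $H$ acts trivially on $I^n/I^{n+1}$, so that the quotient you produce is canonically the one in the statement via \eqref{coinvisom} and \eqref{moveout}.
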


\begin{proof}
    	There are isomorphisms 
        $$
        	H^d_{\Iw}(N,T \otimes_R M) \cong H^d_{\Iw}(N,T) \otimes_R M
        $$
        for any compact $R\ps{G}$-module $M$ finitely generated over $R$, since $G$ has $p$-cohomological dimension $d$.
        In particular, the following sequence is exact:
        $$
        	0 \to H^d_{\Iw}(N,T \otimes_R I^n/I^{n+1})  \to H^d_{\Iw}(N,T \otimes_R \Omega/I^{n+1}) \to
        	H^d_{\Iw}(N,T \otimes_R \Omega/I^n) \to 0.
        $$
        We consider the connecting homomorphism in $H$-homology:
        \begin{equation} \label{partial}
        	\partial^{(n)} \colon H_1(H,H^d_{\Iw}(N, T) \otimes_R \Omega/I^n)
        	\to H^d_{\Iw}(N,T)_H \otimes_R I^n/I^{n+1}.
        \end{equation} 
        
        We next apply Lemma \ref{commsquare} of the appendix, which says that edge maps to total terms in homological
        Grothendieck spectral sequences are compatible with connecting maps.  Here, the spectral sequence is that of
        Proposition \ref{spectral}, which is associated to the composition of functors $F = H_0(H, \sdot)$ and 
        $F' = H^d_{\Iw}(N, \sdot)$, noting that $F \circ F' \cong H^d(G,\sdot)$ via corestriction.  The connecting homomorphisms are 
        from degrees $1$ to $0$ and are associated to the short exact sequence of \eqref{bocksteinT}.
      
	In this setting, the lemma provides a commutative square related to the diagram
	\begin{equation} \label{keysquare}
		\begin{tikzcd}
			H^{d-1}(G,T \otimes_R \Omega/I^n) \arrow{r}{\Psi^{(n)}} \arrow[d, twoheadrightarrow]
			& H^d(G,T) \otimes_R I^n/I^{n+1} \arrow{d}{\wr} \\
			H_1(H,H^d_{\Iw}(N, T) \otimes_R \Omega/I^n) \arrow{r}{\partial^{(n)}} & H^d_{\Iw}(N,T)_H \otimes_R I^n/I^{n+1},
		\end{tikzcd}
	\end{equation}
	but with $L_1(F \circ F')(T \otimes_R \Omega/I^n)$ in place of $H^{d-1}(G,T \otimes_R \Omega/I^n)$.  
	By Lemma \ref{dontneedderived}, which is a simple consequence of the universality of left derived functors, we have a
	surjection from the latter object to the former, compatible with their connecting homomorphisms to 
	$H^d(G,T) \otimes_R I^n/I^{n+1}$. 
	This allows us to make the replacement while maintaining the surjectivity of the left vertical map, so we indeed
	have the commutative square \eqref{keysquare}.  
	
	By Lemma \ref{bocklemma}, the isomorphism in the statement 
	of the theorem is the map on cokernels of the horizontal maps in \eqref{keysquare}.
\end{proof}

Although not used in this paper, for the purposes of Iwasawa-theoretic applications, it is useful to have a slightly stronger version of 
Theorem \ref{augfilt}.  So, we remark that it has the following generalization, with virtually no additional complications
(given that the results of \cite{ls, fk} hold in this generality).

\begin{remark}
     Let $\mc{G}$ be a profinite group, and let $\Gamma$ be a quotient of $\mc{G}$ by a closed normal subgroup
    $G$.  
    Let $\mc{H}$ be a quotient of $\mc{G}$ by a closed normal subgroup $N$ that is
    contained in $G$, and let $H = G/N$ as before.  
    We then have $\Gamma \cong \mc{H}/H$.  That is, we have a commutative diagram of exact sequences
    $$
    	\begin{tikzcd}
    		N \arrow[d,hook] \arrow[r,equal] & N \arrow[d,hook] \\
    		G \arrow[r,hook] \arrow[d,two heads] & \mc{G} \arrow[r, two heads] \arrow[d, two heads] & \Gamma \arrow[d,equal] \\
    		H \arrow[r,hook] & \mc{H} \arrow[r,two heads] & \Gamma.
    	\end{tikzcd}
    $$
    
    Take $T$ to be a compact $R\ps{\mc{G}}$-module finitely generated over $R$, and 
    replace the assumptions on $G$ and $H$ from the beginning of this subsection with the identical assumptions
    on $\mc{G}$ and $\mc{H}$, respectively.  We have 
    Iwasawa cohomology groups $H^i_{\Iw}(N,T)$ and $H^i_{\Iw}(G,T)$, which are now taken relative to the larger group $\mc{G}$.  These 
    are finitely generated as modules over $R\ps{\mc{H}}$ and $\Lambda = R\ps{\Gamma}$, respectively, and we have as before 
    a spectral sequence
    $$
    	E^2_{i,j}(T) = H_i(H,H^{d-j}_{\Iw}(N,T)) \Rightarrow E_{i+j}(T) = H^{d-i-j}_{\Iw}(G,T),
    $$
    but now of $\Lambda$-modules.  In exactly the same manner as before, this gives rise to isomorphisms
    $$
    	\frac{I^n H^d_{\Iw}(N,T)}{I^{n+1} H^d_{\Iw}(N,T)} \cong \frac{H^d_{\Iw}(G,T) \otimes_R I^n/I^{n+1}}{\image \Psi^{(n)}},
    $$
    again of $\Lambda$-modules.
\end{remark}

\subsection{The abelian case} \label{abelian}

We turn to the direct computation of generalized Bockstein maps on $1$-cocycles for abelian $H$.  That is, let us now take $H$ to be a finitely generated, abelian pro-$p$ group, and
let us take $R$ to be a quotient of $\zp$.   We give an explicit formula for $\Psi^{(n)}$ under a hypothesis on the size of $R$ that ensures our flatness hypothesis is satisfied.  If $H$ has no nonzero $p$-torsion, no hypothesis is needed.

We begin with the following simple lemma.

\begin{lemma}
\label{lem:binomial}
Let $s$ and $t$ be positive integers with $n < p^{t-s+1}$.  Then $(1+x)^{p^t}-1$ is in the ideal $(x^{n+1},p^s)$ of $\Z[x]$.
\end{lemma}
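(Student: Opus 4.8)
The plan is to expand
$(1+x)^{p^t}-1 = \sum_{k=1}^{p^t}\binom{p^t}{k}x^k$
and to observe that a polynomial of $\Z[x]$ lies in $(x^{n+1},p^s)$ precisely when each of its coefficients in degrees $1$ through $n$ is divisible by $p^s$. Indeed, writing $(x^{n+1},p^s) = x^{n+1}\Z[x] + p^s\Z[x]$, any element has its degree-$\le n$ coefficients of the form $p^s b_j$, and conversely a polynomial whose low-degree coefficients are all divisible by $p^s$ can be split as a $p^s$-multiple plus an $x^{n+1}$-multiple. Thus the lemma reduces to the single claim that $p^s \mid \binom{p^t}{k}$ for every $k$ with $1 \le k \le n$.

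To establish this claim I would compute the $p$-adic valuation $v_p$ of the binomial coefficient. The key identity is $k\binom{p^t}{k} = p^t\binom{p^t-1}{k-1}$, which gives $v_p\binom{p^t}{k} = t - v_p(k) + v_p\binom{p^t-1}{k-1}$. The one place that requires genuine input — and what I expect to be the main obstacle, modest as it is — is to show that $\binom{p^t-1}{k-1}$ is prime to $p$ for all $0 \le k-1 \le p^t-1$. This follows from Lucas' theorem: every base-$p$ digit of $p^t-1$ equals $p-1$ and hence dominates the corresponding digit of $k-1$, so the product of binomial coefficients of the digits is a unit mod $p$. Consequently $v_p\binom{p^t}{k} = t - v_p(k)$.

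Finally I would feed in the hypothesis $n < p^{t-s+1}$. For $1 \le k \le n$ we then have $k < p^{t-s+1}$, so $p^{t-s+1}\nmid k$ and therefore $v_p(k) \le t-s$; note this also forces $t \ge s$, since otherwise $p^{t-s+1}\le 1$ would contradict $n \ge 1$. Hence $v_p\binom{p^t}{k} = t - v_p(k) \ge s$, i.e.\ $p^s \mid \binom{p^t}{k}$ for all such $k$, which by the reduction of the first paragraph completes the proof. Everything apart from the valuation computation is bookkeeping about the structure of the ideal $(x^{n+1},p^s)$.
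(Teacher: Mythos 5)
Your proposal is correct and takes essentially the same approach as the paper: expand $(1+x)^{p^t}$ by the binomial theorem and reduce to the divisibility $p^s \mid \binom{p^t}{k}$ for $0 < k < p^{t-s+1}$, which the paper simply recalls as a standard fact. Your valuation computation $v_p\binom{p^t}{k} = t - v_p(k)$, via the identity $k\binom{p^t}{k} = p^t\binom{p^t-1}{k-1}$ and Lucas' theorem, correctly supplies a proof of that fact; and your characterization of membership in $(x^{n+1},p^s)$ is valid for the polynomial at hand because its constant term vanishes, though as stated in general one must also require the degree-$0$ coefficient to be divisible by $p^s$.
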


\begin{proof}
Recall that $p^s$ divides $\binom{p^t}{i}$ for $0 < i < p^{t-s+1}$.  
Therefore
$$
	(1+x)^{p^t} = \sum_{i=0}^{p^t} \binom{p^t}{i}x^i \equiv 1 \bmod (x^{n+1},p^s)
$$ 
so long as $n < p^{t-s+1}$.
\end{proof}

Let $h_1, \dots, h_r$ be a minimal set of generators for $H$, labeled such that $h_1, \dots, h_c$ have finite orders $p^{t_1}\le  \dots \le  p^{t_c}$ and $h_{c+1},\dots, h_r$ have infinite order, for some $0 \le c \le r$. Define $x_i = [h_i] - 1 \in \Omega$ for $1 \le i \le r$, where $[h_i]$ denotes the group element of $h_i$, so that $I = (x_1, \dots, x_r)$.  We then have
$$
	\Omega \cong \frac{R\ps{x_1,\ldots,x_r}}{((x_1+1)^{p^{t_1}}-1,\ldots,(x_c+1)^{p^{t_c}}-1)}.
$$
We have $c >0$ if and only if $H$ is not $\zp$-free, in which case we suppose that $R=\Z/p^s\Z$ with $n < p^{t_1-s+1}$. 
By Lemma \ref{lem:binomial}, we have
\[
	\Omega/I^j \cong \frac{R[x_1,\ldots,x_r]}{(x_1,\ldots,x_r)^j}
\]
for $j \le n+1$. Moreover, $I^n/I^{n+1}$ is a free $R$-module with basis the monomials in the variables $x_i$ of degree $n$.  In particular, the generalized Bockstein map $\Psi^{(n)}$ is defined.

We may view any element of $q \in T \otimes_R \Omega/I^n$ as having the form
$$
	q = \sum_{k_1+ \cdots +k_r < n} \alpha_{k_1,\ldots,k_r} x_1^{k_1} \cdots x_r^{k_r},
$$ 
where the sum is taken over $r$-tuples $(k_1, \ldots, k_r)$ of nonnegative integers with sum less than $n$ and
with $\alpha_{k_1,\ldots,k_r} \in T$, omitting the notation for the tensor product in such an expression.  
Setting $\|k\| = k_1 + \cdots + k_r$ for an $r$-tuple $(k_1, \ldots, k_r)$, let's simplify this notation as
\begin{equation} \label{shortform}
	q = \sum_{\|k\| < n} \alpha_k x^k,
\end{equation}
where $x^k = x_1^{k_1} \cdots x_r^{k_r}$.

Let $\pi \colon G \to H$ denote the quotient map.  For each $i$, let
$$
	A_i = \begin{cases} \Z/p^{t_i}\Z & \text{if } 1 \le i \le c, \\ \zp & \text{if } c < i \le r. \end{cases}
$$
For $1 \le i \le r$, let $\chi_i \colon G \to A_i$ be the homomorphisms determined by
$$
	\pi(g) = \prod_{i=1}^r h_i^{\chi_i(g)}
$$ 
for $g \in G$.  
The action of $g \in G$ on $q$ as in \eqref{shortform} is given by multiplication by 
$\prod_{i=1}^r (1+x_i)^{\chi_i(g)}$.  That is, we have the formula
\begin{equation} \label{act_sum}
	g \cdot q =  \sum_{\|k\| < n}
	\left(\sum_{0 \le k' \le k} \binom{\chi(g)}{k'} 
	g\alpha_{k-k'} \right) x^k,
\end{equation}
where the second sum is over $r$-tuples $k'$ of nonnegative integers with $k'_i \le k_i$ for each $i$, and where we have set
$\binom{\chi(g)}{k'} = \binom{\chi_1(g)}{k'_1} \cdots \binom{\chi_r(g)}{k'_r}$.

Note that our assumption on the cardinality of $R$ can be rephrased as saying that either $c=0$ or $R$ is a quotient of $A_1$ such that 
$|R| < \frac{p}{n}|A_1|$. With our notation and this assumption established, we can give an explicit formula for $\Psi^{(n)}$.

\begin{proposition} \label{connhomcomp}
	Let $f \colon G \to T \otimes_R \Omega/I^n$ be a $1$-cocycle, and write 
	$$
		f = \sum_{\|k\| < n} \lambda_k x^k
	$$
	with $\lambda_k \colon G \to T$.   Then $\Psi^{(n)}$ takes the class of $f$ to the class of the $2$-cocycle
	$$
		(g,h) \mapsto \sum_{\|k\| = n} \left( \sum_{0 < k' \le k}
		\binom{\chi(g)}{k'} g \lambda_{k-k'}(h) \right) x^k,
	$$
	where the first sum is taken over $r$-tuples $k = (k_1, \ldots, k_r)$ of nonnegative integers summing to $n$, and
	the second sum is taken over nonzero $r$-tuples $k'$ of nonnegative integers with $k'_i \le k_i$ for all $i$.
\end{proposition}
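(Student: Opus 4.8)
The plan is to compute the connecting homomorphism underlying $\Psi^{(n)}$ directly on inhomogeneous cochains, by the standard dimension-shifting recipe: lift $f$ across the surjection in \eqref{bocksteinT}, apply the coboundary operator, and observe that the result takes values in the submodule $T \otimes_R I^n/I^{n+1}$, where it represents the image of $[f]$ under the connecting map. Composing with the inverse of the isomorphism \eqref{moveout} then produces the asserted element of $H^2(G,T) \otimes_R I^n/I^{n+1}$.

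The reduction $\Omega/I^{n+1} \to \Omega/I^n$ kills exactly the monomials of degree $n$, so the natural lift of $f = \sum_{\|k\| < n} \lambda_k x^k$ to a $1$-cochain $\tilde f \colon G \to T \otimes_R \Omega/I^{n+1}$ is the same expression $\tilde f = \sum_{\|k\| < n} \lambda_k x^k$ read in $\Omega/I^{n+1}$, which introduces no monomials of degree $n$. I would then compute
$$
	(d\tilde f)(g,h) = g \cdot \tilde f(h) - \tilde f(gh) + \tilde f(g),
$$
expanding the twisted action $g \cdot \tilde f(h)$ by means of \eqref{act_sum}. Since $f$ is a cocycle, the image of $d\tilde f$ in $T \otimes_R \Omega/I^n$ equals $df = 0$; hence $d\tilde f$ does land in $T \otimes_R I^n/I^{n+1}$, and being a coboundary it is automatically a $2$-cocycle whose class is, by construction, the connecting-map image of $[f]$.

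What remains is to extract the degree-$n$ part. The cochains $\tilde f(gh)$ and $\tilde f(g)$ carry no monomials of degree $n$ and so contribute nothing, leaving only $g \cdot \tilde f(h)$. Reading off the coefficient of $x^k$ with $\|k\| = n$ from \eqref{act_sum}, the $k' = 0$ summand equals $g\lambda_k(h)$, which vanishes because $\lambda_k = 0$ for $\|k\| = n$; the surviving summands are precisely $\sum_{0 < k' \le k} \binom{\chi(g)}{k'} g\lambda_{k-k'}(h)$. Summing over $\|k\| = n$ gives the displayed $2$-cocycle, whose class corresponds under the inverse of \eqref{moveout} to the desired element of $H^2(G,T) \otimes_R I^n/I^{n+1}$, since the degree-$n$ monomials $x^k$ form an $R$-basis of $I^n/I^{n+1}$.

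I do not anticipate a real obstacle, as this is a direct cochain manipulation; the points demanding care are purely organizational. One must confirm that the naive lift introduces no spurious degree-$n$ terms, so that $\tilde f(gh)$ and $\tilde f(g)$ drop out of the top graded piece, that the $k' = 0$ contribution disappears exactly because $f$ has no degree-$n$ coefficient, and that the identification of the $T \otimes_R I^n/I^{n+1}$-valued cocycle with an element of $H^2(G,T) \otimes_R I^n/I^{n+1}$ via \eqref{moveout} matches the one implicit in the statement (in particular that the sign convention for the connecting homomorphism produces the displayed plus sign).
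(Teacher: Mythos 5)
Your proposal is correct and matches the paper's proof essentially verbatim: both use the naive set-theoretic section $s_n$ to lift $f$, note that the cocycle condition forces $d\tilde f$ to equal the degree-$n$ part of $g\tilde f(h)$ (the lifts $\tilde f(g)$ and $\tilde f(gh)$ having no degree-$n$ terms), and read off the coefficients from \eqref{act_sum}. Your extra remarks on the vanishing $k'=0$ term and the identification via \eqref{moveout} are just explicit versions of steps the paper leaves implicit.
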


\begin{proof}
	Consider the set-theoretic section 
	\begin{equation} \label{splitting}
		s_n \colon T \otimes_R \Omega/I^n \to T \otimes_R \Omega/I^{n+1}
	\end{equation}
	that takes a sum as in \eqref{shortform} to the same expression in the larger module.  Let $\tilde{f} = s_n \circ f$.
	By definition, $\Psi^{(n)}([f])$ is the class of $d\tilde{f}$, where
	$$
		d\tilde{f}(g,h) = \tilde{f}(g) + g\tilde{f}(h) - \tilde{f}(gh)
	$$
	for $g, h \in G$.  Since $f$ is a cocycle, the right-hand side of this expression is equal to the degree $n$ part of $g\tilde{f}(h)$,
	which by \eqref{act_sum} is exactly as in the statement of the proposition.
\end{proof}

For general $H$, pro-$p$ but not necessarily abelian, we can use this computation to see that $\Psi^{(1)}$ is given by cup products.  We consider the case that $H$ is a quotient of $G$ such that the abelianization $H^\mathrm{ab}$ of $H$ is finitely generated and pro-$p$. 
As before, but now for $H^{\ab}$ in place of $H$, there are nonnegative integers $r \ge c$ and positive integers $t_1 \le  \cdots \le t_c$ such that 
\begin{equation} \label{Hab}
	H^\mathrm{ab} \cong \bigoplus_{i=1}^r A_i,
\end{equation}
where $A_i = \Z/p^{t_i}\Z$ for $i=1,\dots,c$ and $A_i=\Z_p$ for $i=c+1,\dots, r$.  For $i=1,\dots, r$, we let $\chi_i \colon G \to A_i$ denote the 
quotient map $G \to A_i$.   We take $n= 1$, and our condition on the cardinality of $R$ becomes
$s \le t_1$ when $c \ge 1$.

Fix generators $h_1, \ldots, h_r$ of $H$ such that each $h_i$ maps to $1 \in A_i$ under the composition of the quotient map and the isomorphism in \eqref{Hab}.  There is an isomorphism $I/I^2 \cong H^{\ab} \otimes_{\zp} R$ taking the image of $x_i = [h_i]-1$ to $h_i \otimes 1$.

\begin{proposition}
\label{prop:abelian}
	Let $H$ be a finitely generated pro-$p$ group with $H^{\ab}$ as in \eqref{Hab}, let $I$ be the augmentation ideal in $\Omega = R\ps{H}$,
	and let $\chi_i$ and $x_i$ for $1 \le i \le r$ be
	as in the previous paragraph.  For any $1$-cocycle $f \colon G \to T$, we have
    	\[
    		\Psi^{(1)}([f]) = \sum_{i=1}^r (\chi_i \cup f) x_i \in H^2(G,T) \otimes_R I/I^2.
    	\]
\end{proposition}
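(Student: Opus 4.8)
The plan is to run the direct cochain computation from the proof of Proposition \ref{connhomcomp} in the special case $n=1$, the point being that this computation depends only on the $G$-action on $\Omega/I^2$, and that action factors through $H^{\ab}$. So although $H$ need not be abelian, the $n=1$ case of the earlier formula still applies. First I would record the congruence that plays the role of the action formula \eqref{act_sum} here. The map $h \mapsto [h]-1 \bmod I^2$ is a homomorphism from $H$ to the additive group $I/I^2$, since
$$
	[gh]-1 = ([g]-1) + ([h]-1) + ([g]-1)([h]-1) \equiv ([g]-1) + ([h]-1) \pmod{I^2};
$$
it therefore factors through $H^{\ab}$ and, on the chosen generators, sends $h_i$ to $x_i$. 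Writing $\pi(g)$ through its components $\chi_i(g)$ and using $(1+x_i)^m \equiv 1 + m\,x_i \bmod I^2$ (well defined since $p^{t_i}x_i \in I^2$), this gives
$$
	[\pi(g)] \equiv 1 + \sum_{i=1}^r \chi_i(g)\, x_i \pmod{I^2}.
$$

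Next I would compute the connecting map on the cocycle $f$. Lifting through the section $s_1$ of \eqref{splitting}, the cochain $\tilde f = s_1 \circ f$ has only a constant term, $\tilde f(g) = f(g) \in T \otimes_R \Omega/I^2$, and $\Psi^{(1)}(f)$ is represented by $d\tilde f(g,h) = \tilde f(g) + g\tilde f(h) - \tilde f(gh)$, which lands in $T \otimes_R I/I^2$. Because the $G$-action on $T \otimes_R \Omega$ is diagonal, the congruence above yields $g\tilde f(h) \equiv gf(h) + \sum_i \chi_i(g)\,(gf(h))\,x_i \bmod T \otimes_R I^2$. The degree-zero part $f(g) + gf(h) - f(gh)$ vanishes since $f$ is a $1$-cocycle, leaving
$$
	d\tilde f(g,h) = \sum_{i=1}^r \chi_i(g)\,(gf(h))\,x_i .
$$

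Finally I would identify this $2$-cocycle with cup products. With the convention $(a \cup b)(g,h) = a(g)\cdot g\,b(h)$, the $i$-th component $(g,h) \mapsto \chi_i(g)\,(gf(h))$ is exactly a cocycle representing $\chi_i \cup f$, once we transport coefficients along the natural isomorphism $A_i \otimes_\Z T \cong (A_i \otimes_\Z R) \otimes_R T$ together with the identification of $A_i \otimes_\Z R$ with the $i$-th summand $R\,x_i$ of $I/I^2$ afforded by the size hypothesis on $R$. Passing to cohomology and pulling back along the isomorphism \eqref{moveout}, which matches $H^2(G,T) \otimes_R I/I^2$ with $H^2(G, T \otimes_R I/I^2)$ summand by summand, gives $\Psi^{(1)}(f) = \sum_i (\chi_i \cup f)\,x_i$.

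The cochain manipulation and the use of the cocycle identity are routine. The step needing the most care is the bookkeeping in the last paragraph: checking that the $i$-th component of $d\tilde f$ really matches $\chi_i \cup f$ under the correct identification of the coefficient module $A_i \otimes_\Z T$ with the $i$-th summand of $T \otimes_R I/I^2$, and that the cup-product sign convention is consistent throughout. The one genuinely nonabelian input is the congruence for $[\pi(g)]$ modulo $I^2$; this is where the reduction to $H^{\ab}$ is used, and it is what allows us to invoke the $n=1$ case of Proposition \ref{connhomcomp} even though $H$ is not assumed abelian.
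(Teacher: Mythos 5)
Your proof is correct and takes essentially the same approach as the paper: the paper reduces to the abelian case by noting that $\Omega \to R\ps{H^{\ab}}$ induces an isomorphism $\Omega/I^2 \cong \Omega'/(I')^2$ and then citing the $n=1$ case of Proposition \ref{connhomcomp}, while you inline that same reduction via the congruence $[\pi(g)] \equiv 1 + \sum_{i=1}^r \chi_i(g)\,x_i \bmod I^2$ and rerun the $n=1$ cochain computation directly. The difference is purely presentational, and your bookkeeping (the lift $\tilde f = s_1\circ f$, the vanishing of the degree-zero part by the cocycle identity, and the summand-by-summand use of \eqref{moveout}) matches what the paper's citation of Proposition \ref{connhomcomp} leaves implicit.
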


\begin{proof}
    Let $\Omega'=R\ps{H^\mathrm{ab}}$ with augmentation ideal $I' \subset \Omega'$. 
    Both $\Omega/I$ and $\Omega'/I'$ are identified with $R$ via the augmentation maps, and there are also compatible
    isomorphisms between the graded quotients $I/I^2$ and $I'/(I')^2$ and the $R$-module $H^{\ab} \otimes_{\zp} R$.  It follows that
    the canonical map $\Omega \to \Omega'$ induces an isomorphism $\Omega/I^2 \cong \Omega'/(I')^2$.
    Thus $\Psi^{(1)}$ equals the first generalized Bockstein map for $H^\mathrm{ab}$, and the proposition 
    follows from the case $n=1$ of Proposition  \ref{connhomcomp}.
\end{proof}

This result was previously studied by the third author in the context of Iwasawa theory, where these maps are referred to as
reciprocity maps with restricted ramification: see for instance \cite[Lemma 4.1]{sh-paireis} for its introduction.
In the following section, we study analogous results for $\Psi^{(n)}$ with $n > 1$ in terms of higher Massey products.

\section{Massey products}\label{sec:masseyreview}
In this section, we review the definitions of Massey products and defining systems, with some modifications from the standard definitions in order to allow for nontrivial coefficient modules.  We also introduce the notions of partial and proper defining systems.

\subsection{Upper-triangular generalized matrix algebras}
The notion of Massey products that we will use is conveniently stated using the theory of generalized matrix algebras, as found in \cite{bc}.  We require only a simple upper-triangular version of these algebras. Let $n$ be a positive integer, and let $R$ be a commutative ring.

\begin{definition} \label{UGMA}
An $n$-dimensional \emph{upper-triangular generalized matrix algebra} $\mc{A}$ over $R$ (or, \emph{$R$-UGMA}) is an $R$-algebra
formed out of the data of
\begin{itemize}
\item finitely generated $R$-modules $A_{i,j}$ for $1 \le i \le j \le n$ with $A_{i,j} = R$ if $i=j$, and
\item $R$-module homomorphisms $\varphi_{i,j,k} \colon A_{i,j} \otimes_R A_{j,k} \to A_{i,k}$ for all $1 \le i \le j \le k \le n$
which are induced by the given $R$-actions if $i = j$ or $j = k$
\end{itemize}
such that the two resulting maps 
$$
	A_{i,j} \otimes_R A_{j,k} \otimes_R A_{k,l} \to A_{i,l}
$$ 
coincide for all $1 \le i < j < k < l \le n$.  The tuple $(A_{i,j},\varphi_{i,j,k})$ defines an $R$-algebra $\mc{A}$ with underlying $R$-module
$$
	\mc{A}=\bigoplus_{1 \le i \le j \le n} A_{i,j},
$$ 
and multiplication given by matrix multiplication: that is, for $a = (a_{i,j})$ and $b = (b_{i,j})$ in $\mc{A}$, the $(i,j)$-entry $(ab)_{i,j}$ of $ab$ is 
\[
(ab)_{i,j} = \sum_{k=i}^j \varphi_{i,k,j}(a_{i,k} \otimes b_{k,j}).
\]
\end{definition}

Our interest is in the multiplicative group $\mc{U} = \mc{U}(\mc{A})$ of \emph{unipotent} matrices in a UGMA $\mc{A}$, i.e., those $a = (a_{i,j})$ with $a_{i,i} = 1$ for all $i$.
We shall often take the quotient $\mc{U}' = \mc{U}'(\mc{A})$ of this $\mc{U}$ by its central subgroup $\mc{Z} = \mc{Z}(\mc{A})$ of \emph{unipotent central} elements, i.e., those $a \in \mc{U}$ with $a_{i,j} = 0$ for all $(i,j) \neq (1,n)$.

The following is the key example for our purposes.
\begin{example}
\label{eg:GMA we use}
Let $M$ be a finitely generated $R$-module, and let $m$ be a positive integer less than $n$. We define an $n$-dimensional $R$-UGMA 
$\mc{A}_n(M,m)$ as follows.  Set 
\[
	A_{i,j} = \begin{cases} 
	M &\text{if }i \le m < j, \\
	R &\text{ otherwise}\end{cases}
\]
and take the maps $\varphi_{i,j,k}$ to be the $R$-module structure maps.  This makes sense since, given $i \le j \le k$, at least one of $A_{i,j}$ and $A_{j,k}$ must be $R$, as $m$ cannot satisfy both $m < j$ and $j \le m$.

Let us write $\mc{U}_n(M,m)$ for $\mc{U}(\mc{A}_n(M,m))$ and $\mc{U}'_n(M,m)$ for $\mc{U}'(\mc{A}_n(M,m))$.
To make this easier to visualize, note that we can write $\mc{U}_n(M,m)$ in ``block matrix'' form as
$$
	\mc{U}_n(M,m) = \begin{pmatrix} \mr{U}_m(R) & \mr{M}_{m,n-m}(M) \\ 0 & \mr{U}_{n-m}(R) \end{pmatrix},
$$
where $\mr{U}_k(R) \leqslant \GL_k(R)$ denotes the group of upper-triangular unipotent matrices 
and $\mr{M}_{k,l}(M)$ denotes the additive group of $k$-by-$l$ matrices with entries in $M$ for positive integers $k$ and $l$.
The latter group is endowed with a left $\mr{U}_k(R)$-action and a commuting right $\mr{U}_l(R)$-action.  Put differently, $\mc{A}_n(M,m)$
itself is a sort of $2$-by-$2$ generalized matrix algebra, allowing noncommutative rings on the diagonal and bimodules in the
non-diagonal entries.
\end{example}

We actually need to use \emph{profinite UGMAs} defined just as in Definition \ref{UGMA} using profinite rings $R$ and compact $R$-modules $A_{i,j}$, but now assuming that the induced multiplication maps $A_{i,j} \times A_{j,k} \to A_{i,k}$ are continuous.  Alternatively, the maps $\varphi_{i,j,k}$ can be replaced by maps of completed tensor products over $R$ in the definition.

Though unnecessary, to keep things simple, let us suppose that the compact $R$-modules $A_{i,j}$ in a profinite $R$-UGMA are $R$-finitely generated.  This forces them to have the adic topology for any directed system of ideals that are open neighborhoods of zero.  Moreover, their tensor products and completed tensor products are then abstractly isomorphic, and so we may in particular view the tensor products $A_{i,j} \otimes_R A_{j,k}$ themselves as compact $R$-modules.  (For a slightly longer discussion of this, see \cite[Section 2.3]{ls}.)

Note that any profinite $R$-UGMA $\mc{A}$ has a topology as a finite direct product of the compact $R$-modules $A_{i,j}$, and $\mc{U}$ inherits the subspace topology.

We also want to make a second modification, allowing a continuous action of $G$.

\begin{definition} For a profinite ring $R$ and a profinite group $G$, a \emph{profinite $(R,G)$-UGMA} is the data of a profinite $R$-UGMA $\mc{A}$ together with a continuous $G$-action on each $A_{i,j}$ such that
\begin{itemize}
\item the action on $A_{i,i} = R$ is trivial for all $i$, and
\item the maps $\varphi_{i,j,k}$ are maps of $R\ps{G}$-modules, where $A_{i,j} \otimes_R A_{j,k}$ is given the diagonal action of $G$.
\end{itemize}
\end{definition}

We remark that, aside from issues of finite generation, the difference between a profinite $R\ps{G}$-UGMA and a profinite $(R,G)$-UGMA is that in the former, each $A_{i,i} = R\ps{G}$, whereas in the latter, each $A_{i,i}$ is $R$ with the trivial $G$-action.  We are interested in the latter structure.  

\begin{example}
\label{eg:G GMA we use}
If $R$ is a profinite ring and $T$ is a compact $R\ps{G}$-module (that is $R$-finitely generated), then the $R$-UGMA $\mc{A}_n(T,m)$ of Example \ref{eg:GMA we use} has a natural structure of a profinite $(R,G)$-UGMA by letting $G$ act on $A_{i,j}$ via its action on $T$ if $i \le m < j$ and trivially otherwise.
\end{example}

\subsection{Defining systems and Massey products}\label{section:massdefn}

Let $R$ be a profinite ring, let $G$ be a profinite group, and let $n \ge 2$.  Let $T_1, \dots, T_n$ be compact $R\ps{G}$-modules that are $R$-finitely generated for simplicity, and let $\chi_i \colon G \to T_i$ be continuous 1-cocycles for $1 \le i \le n$. In this section, we define \emph{Massey products} of these cocycles, which will be 2-cocycles that depend on a number of choices constituting a \emph{defining system}.

\begin{definition}
\label{defn:def sys}
A \emph{defining system} for the Massey product of $\chi_1,\dots,\chi_n$ is the data of
\begin{itemize}
    \item an $(n+1)$-dimensional profinite $(R,G)$-UGMA $\mc{A}$ and
    \item a (nonabelian) continuous 1-cocycle $\rho \colon G \to \mc{U}'$
\end{itemize}
such that $A_{i,i+1}=T_i$ for $1 \le i \le n$, and the composition of $\rho$ with projection to $A_{i,i+1}$ is $\chi_i$.
\end{definition}

Given a defining system $\rho \colon G \to \mc{U}'$, there is a unique function $\tilde{\rho} \colon G \to \mc{U}$ lifting $\rho$ and having zero as the $(1,n+1)$-entry of $\tilde{\rho}(g)$ for all $g \in G$. We let $\rho_{i,j} \colon G \to A_{i,j}$ be the map given by taking the $(i,j)$-entry of $\tilde{\rho}$.

\begin{definition}
\label{defn:massey product}
Given a defining system $\rho$, the \emph{$n$-fold Massey product} $(\chi_1,\dots,\chi_n)_\rho \in H^2(G,A_{1,n+1})$ is the class of the 2-cocycle
\[
(g,h) \mapsto \sum_{i=2}^n \varphi_{1,i,n+1}(\rho_{1,i}(g) \otimes g \rho_{i,n+1}(h))
\]
that sends $(g,h)$ to the $(1,n+1)$-entry of $\tilde{\rho}(g)\cdot g \tilde{\rho}(h)$.
\end{definition}

In the remainder of the paper, we will restrict our attention to the setting of the $(n+1)$-dimensional profinite $(R,G)$-UGMAs of the form $\mc{A}_{n+1}(T,m)$ defined in Examples \ref{eg:GMA we use} and
\ref{eg:G GMA we use}.  This means in particular that we only consider $n$-fold Massey products for which there is an $m$ with
$1 \le m \le n$ such that $T_m = T$ and $T_i = R$ for $i \neq m$. In particular, we will always have $(\chi_1,\dots,\chi_n)_\rho \in H^2(G,T)$.

In \cite{sh-massey}, the third author considered the case in which $m = n$ and $\chi_1 = \cdots = \chi_{n-1}$ in a Galois-cohomological setting.  In that case, the key idea 
for relating Massey products to graded pieces of Iwasawa cohomology groups
was to consider only a restricted set of defining systems referred to as \emph{proper defining systems}. We will consider a more general notion of proper defining system that depends on extra data we call a \emph{partial defining system}. In \cite{sh-massey}, the partial defining system comes from unipotent binomial matrices, which we review in Section \ref{subsec:unipotent binomials} below.

\subsection{Massey products relative to proper defining systems}

Fix an integer $n \ge 2$ and two integers $a,b \ge 0$ with $a+b=n$.  Let $Z^i(G,M)$ for a profinite $R\ps{G}$-module $M$ denote the group of continuous $i$-cocycles on $G$ valued in $M$.  Choose tuples 
\begin{eqnarray*}
	\alpha=(\alpha_1,\dots,\alpha_a) \in Z^1(G,R)^a &\mr{and}& \beta = (\beta_1,\dots,\beta_b) \in Z^1(G,R)^b
\end{eqnarray*} 
and a compact $R\ps{G}$-module $T$ that is finitely generated as an $R$-module.  

We next consider a pair of homomorphisms that constitute a part of the defining systems for $(n+1)$-fold Massey products $(\alpha_1, \ldots, \alpha_a, \lambda, \beta_1, \ldots, \beta_b)$, where $\lambda \in Z^1(G,T)$ is allowed to vary.  We write the collection of such Massey products as $(\alpha,\sdot,\beta)$ for short.

\begin{definition}
    A \emph{partial defining system} for $(n+1)$-fold Massey products $(\alpha, \sdot, \beta)$
    is a pair of homomorphisms
    \begin{eqnarray*}
    	\phi \colon G \to \mr{U}_{a+1}(R) &\mr{and}& \theta \colon G \to \mr{U}_{b+1}(R)
    \end{eqnarray*}
    such that $\alpha$ is the off-diagonal of $\phi$ and $\beta$ is the off-diagonal of $\theta$, i.e.,
    $\phi_{i,i+1} = \alpha_i$ for $1 \le i \le a$ and $\theta_{i,i+1} = \beta_i$ for $1 \le i \le b$.
    
    More specifically, an \emph{$(a,b)$-partial defining system} is a partial defining system restricting to some pair $(\alpha,\beta)
    \in Z^1(G,R)^a \times Z^1(G,R)^b$.
\end{definition}

Recall that
$$
	\mc{U}_{n+2}(T,a+1) = \begin{pmatrix} \mr{U}_{a+1}(R) & M_{a+1,b+1}(T) \\ & \mr{U}_{b+1}(R) \end{pmatrix}.
$$
We may then write the quotient by the unipotent central matrices as
$$
	\mc{U}'_{n+2}(T,a+1) = \begin{pmatrix} \mr{U}_{a+1}(R) & M'_{a+1,b+1}(T) \\ & \mr{U}_{b+1}(R) \end{pmatrix}
$$
for $M'_{a+1,b+1}(T) = M_{a+1,b+1}(T)/T$, where $T$ is identified with the matrices that are zero outside the $(1,b+1)$-entry.

\begin{definition}  \label{proper_def}
    Given a $1$-cocycle $\lambda \colon G \to T$,
    a \emph{proper defining system} for an $(n+1)$-fold Massey product $(\alpha,\lambda,\beta)$
    \emph{relative to a partial defining system $(\phi,\theta)$} is a continuous $1$-cocycle
    \[
    	\rho \colon G \to \mc{U}'_{n+2}(T,a+1)
    \]
    of the form
    \[
    	\rho = \ttmat{\phi}{\kappa}{0}{\theta}
    \]
    for some $\kappa \colon G \to M'_{a+1,b+1}(T)$ with $\kappa_{a+1,1} = \lambda$.
\end{definition}

The advantage of proper defining systems is that they are parameterized by abelian, rather than nonabelian, cocycles. To show this, we  introduce a compact $R\ps{G}$-module $\mf{U}_{\phi,\theta}(T)$ such that proper defining systems in $T$ relative to $(\phi,\theta)$ correspond to 1-cocycles with values in $\mf{U}_{\phi,\theta}(T)$.

Consider the compact $R$-module $\mathfrak{U}_{n+2}(R)$ that is the $R$-module of strictly upper-triangular $(n+2)$-dimensional square matrices. The group $\mr{U}_{n+2}(R)$ acts continuously on $\mathfrak{U}_{n+2}(R)$ by conjugation. We consider a $R\ps{\mr{U}_{n+2}(R)}$-submodule $\mathfrak{U}_{a,b}(R)$ of $\mathfrak{U}_{n+2}(R)$ given by
\[
	\{ x=(x_{ij}) \in M_{n+2}(R) \ | \ x_{ij} =0 \text{ if } j \le a+1 \text{ or } i \ge a+2 \}.
\]
In other words, breaking $M_{n+2}(R)$ into blocks using the partition $n+2=(a+1)+(b+1)$ and using block-matrix notation, we have
\[
\mathfrak{U}_{a,b}(R) = \ttmat{0}{M_{a+1,b+1}(R)}{0}{0}.
\]

Given a partial defining system $(\phi, \theta)$, we consider  $\mathfrak{U}_{a,b}(R)$ as a $G$-module via the continuous homomorphism
\[
G \to \mr{U}_{n+2}(R), \quad g \mapsto \ttmat{\phi(g)}{0}{0}{\theta(g)}.
\]
We define an $R\ps{G}$-module $\mf{U}_{\phi,\theta}(T)$ as $\mathfrak{U}_{a,b}(R) \otimes_R T$ with the diagonal $G$-action. 
We also have the following equivalent definition, which has the benefit of being more explicit:
\begin{itemize}
\item $\mf{U}_{\phi,\theta}(T) = M_{a+1,b+1}(T)$ as an $R$-module,
\item the action map $G \to \End(M_{a+1,b+1}(T))$ is given, for $g \in G$ and $x \in M_{a+1,b+1}(T)$, by
\[
g \star x = \phi(g) \cdot gx \cdot \theta(g)^{-1}.
\]
where $gx$ means apply the $g$ action on $T$ to each matrix entry, and the multiplication denoted by ``$\cdot$'' is of matrices.
\end{itemize}

Going forward, we use the latter description of $\mf{U}_{\phi,\theta}(T)$, so consider it as consisting of $(a+1)$-by-$(b+1)$ matrices,
rather than as a subgroup of $M_{n+2}(R)$.  Note that $\mf{U}_{\phi,\theta}(T)$ contains a copy of $T$ as an $R\ps{G}$-submodule by inclusion in the $(1,b+1)$-entry. Let 
$$
	\mf{U}'_{\phi,\theta}(T) = \mf{U}_{\phi,\theta}(T)/T.
$$ 
Let $x \mapsto \tilde{x}$ denote the $R$-module section $\mf{U}_{\phi,\theta}'(T) \to \mf{U}_{\phi,\theta}(T)$ given by filling in the $(1,b+1)$-entry as $0$.

\begin{lemma} \label{correspondence}
    Let $(\phi,\theta)$ be a partial defining system for Massey products $(\alpha,\sdot,\beta)$. 
    Then the map that takes a continuous $1$-cocycle $\kappa' \colon 
    G \to \mf{U}'_{\phi,\theta}(T)$ to a map $\rho \colon G \to \mc{U}'_{n+2}(T,a)$ given by
     \[
    	\rho=\ttmat{\phi}{\kappa'\theta}{0}{\theta}.
    \]
     is a bijection between $Z^1(G,\mf{U}'_{\phi,\theta}(T))$ and the set of proper defining systems in $T$ relative to $(\phi,\theta)$.
\end{lemma}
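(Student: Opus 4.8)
The plan is to produce the inverse assignment $\rho \mapsto \kappa'$ explicitly and to reduce the nonabelian cocycle condition on $\rho$ to the ordinary $1$-cocycle condition on $\kappa'$. Observe first that right multiplication by $\theta(g) \in \mr{U}_{b+1}(R)$ fixes pointwise the submodule $T \subset M_{a+1,b+1}(T)$ sitting in the $(1,b+1)$-entry (since $\theta(g)$ is upper-triangular unipotent), so it descends to an $R$-linear automorphism of $M'_{a+1,b+1}(T) = \mf{U}'_{\phi,\theta}(T)$ with inverse right multiplication by $\theta(g)^{-1}$. Consequently $\kappa' \mapsto \kappa'\theta$ is a bijection, with continuous inverse $\kappa \mapsto \kappa\theta^{-1}$, between continuous maps $G \to \mf{U}'_{\phi,\theta}(T)$ and continuous maps $G \to M'_{a+1,b+1}(T)$. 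Since a proper defining system of the form in Definition \ref{proper_def} has its diagonal blocks pinned to $\phi$ and $\theta$ and is thus determined by its upper-right block $\kappa$, the map $\kappa' \mapsto \rho$ is automatically injective; the entire content is therefore to match up the two cocycle conditions.

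First I would write out the nonabelian cocycle condition in block form. Since $G$ acts coordinate-wise on $\mc{U}'_{n+2}(T,a+1)$ --- trivially on the two diagonal $R$-blocks and through $T$ on the upper-right block --- we have
$$
	\rho(g)\cdot g\rho(h) = \ttmat{\phi(g)}{\kappa(g)}{0}{\theta(g)}\ttmat{\phi(h)}{g\kappa(h)}{0}{\theta(h)}
	= \ttmat{\phi(g)\phi(h)}{\phi(g)\cdot g\kappa(h)+\kappa(g)\theta(h)}{0}{\theta(g)\theta(h)},
$$
where $g\kappa(h)$ denotes the entrywise $G$-action on $T$. As $\phi$ and $\theta$ are homomorphisms, the diagonal blocks of $\rho(gh)$ and $\rho(g)\cdot g\rho(h)$ agree automatically, so $\rho$ is a $1$-cocycle if and only if
$$
	\kappa(gh) = \phi(g)\cdot g\kappa(h)+\kappa(g)\theta(h)
$$
holds in $M'_{a+1,b+1}(T)$.

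Next I would substitute $\kappa = \kappa'\theta$. Because the entries of $\theta$ lie in $R$ and carry the trivial $G$-action, applying $g$ commutes with right multiplication by $\theta(h)$, giving $g\kappa(h) = (g\kappa'(h))\theta(h)$. Using $\theta(gh) = \theta(g)\theta(h)$ to rewrite the left-hand side and then right-multiplying successively by $\theta(h)^{-1}$ and $\theta(g)^{-1}$ turns the displayed identity into
$$
	\kappa'(gh) = \phi(g)\cdot(g\kappa'(h))\cdot\theta(g)^{-1} + \kappa'(g) = g\star\kappa'(h)+\kappa'(g),
$$
which is exactly the condition that $\kappa'$ lie in $Z^1(G,\mf{U}'_{\phi,\theta}(T))$ for the $\star$-action defining $\mf{U}_{\phi,\theta}(T)$. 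Combined with the bijection of maps established above, this yields the asserted bijection onto proper defining systems.

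The only place requiring genuine care is the bookkeeping in the last step: checking that the $G$-action passes through multiplication by $\theta$ (so that $g(\kappa'(h)\theta(h)) = (g\kappa'(h))\theta(h)$) and that both the $\star$-action and the cocycle identity descend compatibly to the quotient by $T$ in the $(1,b+1)$-entry. Continuity of $\rho$ and of $\kappa'$ correspond to one another because $\theta$ and matrix multiplication are continuous, so the bijection restricts to one of continuous cocycles.
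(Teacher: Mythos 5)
Your proof is correct and takes essentially the same route as the paper's: set $\kappa = \kappa'\theta$, read off from block-matrix multiplication that $\rho$ is a cocycle if and only if $\kappa(gh) = \phi(g)\, g\kappa(h) + \kappa(g)\theta(h)$, and identify this with the $\star$-cocycle condition on $\kappa'$ via the same string of manipulations (run in the opposite direction). Your added checks --- that right multiplication by $\theta(g)$ fixes the copy of $T$ in the $(1,b+1)$-entry and hence descends to an invertible map on $\mf{U}'_{\phi,\theta}(T)$, making $\kappa' \mapsto \kappa'\theta$ a bijection, and the continuity bookkeeping --- simply make explicit what the paper leaves implicit.
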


\begin{proof}
    Given a cochain $\kappa' \colon G \to \mf{U}'_{\phi,\theta}(T)$, set $\kappa = \kappa'\theta
    \colon G \to \mf{U}'_{\phi,\theta}(T)$.
    We have to check that 
    $$
    	\rho = \ttmat{\phi}{\kappa}{0}{\theta}
    $$ is a cocycle if and only if $\kappa'$ is a cocycle.   Matrix multiplication tells us that $\rho$ is a cocycle if and only if
    \begin{equation} \label{cocyclecond}
   	 \kappa(gh)= \phi(g) g\kappa(h)  + \kappa(g)\theta(h).
    \end{equation}
    The cochain $\kappa'$ is a cocycle if and only if the second equality holds in the following string of equalities
    \begin{align*}
        \kappa(gh)& =\kappa'(gh)\theta(gh) \\
        & =( g\star \kappa'(h)+\kappa'(g))\theta(gh) \\
        & =( \phi(g) g\kappa'(h) \theta(g)^{-1} + \kappa'(g)) \theta(g) \theta(h)\\
        &= \phi(g) g\kappa'(h) \theta(h) + \kappa'(g)\theta(g) \theta(h) \\
        &= \phi(g) g\kappa(h)  + \kappa(g)\theta(h),
    \end{align*}
    hence the result.
\end{proof}

The value of the Massey product associated to a proper defining system is also a value of a connecting homomorphism for
an exact sequence attached to the underlying partial defining system.

\begin{theorem} \label{connecting}
    Let $(\phi,\theta)$ be a partial defining system for $(\alpha,\sdot,\beta)$. 
    Let $\kappa' \in Z^1(G,\mf{U}'_{\phi,\theta}(T))$, and let $\rho = \sm{ \phi }{ \kappa' \theta}{}{\theta}$ be the associated proper
    defining system as 
    in Lemma \ref{correspondence}. 
    Consider the short exact sequence
    \[
    	0 \to T \to \mf{U}_{\phi,\theta}(T) \to \mf{U}'_{\phi,\theta}(T) \to 0.
    \]
    Then the image of the class of $\kappa'$ under the connecting map
    \[
    \partial \colon H^1(G,\mf{U}'_{\phi,\theta}(T)) \to H^2(G,T)
    \]
    is the $(n+1)$-fold Massey product $(\alpha_1,\dots,\alpha_a, \kappa'_{a+1,1}, \beta_1,\dots,\beta_b)_{\rho}$.
\end{theorem}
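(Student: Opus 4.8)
The plan is to represent both sides as explicit $T$-valued $2$-cocycles and to show that, for a judiciously chosen lift, they agree on the nose. Write $\phi_g = \phi(g)$ and $\theta_g = \theta(g)$, and recall from Lemma \ref{correspondence} that $\rho = \sm{\phi}{\kappa}{0}{\theta}$ with $\kappa = \kappa'\theta$. Let $\widetilde\kappa \colon G \to M_{a+1,b+1}(T)$ be the lift of $\kappa$ with vanishing $(1,b+1)$-entry, so that $\tilde\rho = \sm{\phi}{\widetilde\kappa}{0}{\theta}$ is exactly the zero-corner lift of $\rho$ used in Definition \ref{defn:massey product}. Since $G$ acts only on the $T$-valued upper-right block, a block matrix multiplication identifies the $(1,n+2)$-corner of $\tilde\rho(g)\cdot g\tilde\rho(h)$, and hence the Massey product $(\alpha_1,\dots,\alpha_a,\kappa'_{a+1,1},\beta_1,\dots,\beta_b)_\rho$, with the class of
$$
	F(g,h) = \left[\phi_g \cdot {}^g\widetilde\kappa_h + \widetilde\kappa_g\cdot\theta_h\right]_{1,b+1}.
$$

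To compute $\partial[\kappa']$, I would lift $\kappa'$ to the cochain $\tilde\kappa' \colon G \to \mf{U}_{\phi,\theta}(T) = M_{a+1,b+1}(T)$ given by $\tilde\kappa'_g = \widetilde\kappa_g\,\theta_g^{-1}$. This is a legitimate set-theoretic lift: right multiplication by $\theta_g^{-1}$ fixes the copy of $T$ in the $(1,b+1)$-entry and so descends to $\mf{U}'_{\phi,\theta}(T)$, and $\widetilde\kappa_g$ reduces to $\kappa_g = \kappa'_g\theta_g$, whence $\tilde\kappa'_g$ reduces to $\kappa'_g$. As the connecting homomorphism does not depend on the chosen lift, $\partial[\kappa']$ is represented by the $(1,b+1)$-entry of the twisted coboundary
$$
	d_\star\tilde\kappa'(g,h) = \tilde\kappa'_g + \phi_g\cdot {}^g\tilde\kappa'_h\cdot\theta_g^{-1} - \tilde\kappa'_{gh},
$$
formed with the action $g\star x = \phi_g\cdot gx\cdot\theta_g^{-1}$ defining the short exact sequence in the statement.

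The crux is then a direct cancellation enabled by this choice of lift. Substituting $\tilde\kappa'_g = \widetilde\kappa_g\theta_g^{-1}$ and using that $\theta$ is a homomorphism, so that $\theta_h^{-1}\theta_g^{-1} = \theta_{gh}^{-1}$ and $\theta_h\theta_{gh}^{-1} = \theta_g^{-1}$, I would rewrite
$$
	d_\star\tilde\kappa'(g,h) = \widetilde\kappa_g\theta_g^{-1} + \bigl(\phi_g\cdot {}^g\widetilde\kappa_h - \widetilde\kappa_{gh}\bigr)\theta_{gh}^{-1}.
$$
The cocycle relation \eqref{cocyclecond} for $\kappa$ reads $\widetilde\kappa_{gh} \equiv \phi_g\cdot {}^g\widetilde\kappa_h + \widetilde\kappa_g\theta_h \pmod{T}$, and because $\widetilde\kappa$ has zero $(1,b+1)$-entry the $T$-valued defect of this relation is precisely $F(g,h)$ placed in the $(1,b+1)$-entry; that is, $\phi_g\cdot {}^g\widetilde\kappa_h - \widetilde\kappa_{gh} = -\widetilde\kappa_g\theta_h + F(g,h)\,E_{1,b+1}$, where $E_{1,b+1}$ is the matrix unit. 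Plugging this in, the terms $\widetilde\kappa_g\theta_g^{-1}$ and $-\widetilde\kappa_g\theta_h\theta_{gh}^{-1} = -\widetilde\kappa_g\theta_g^{-1}$ cancel, and since $E_{1,b+1}\theta_{gh}^{-1} = E_{1,b+1}$ one is left with $d_\star\tilde\kappa'(g,h) = F(g,h)\,E_{1,b+1}$.

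Reading off the $(1,b+1)$-entry then yields $\partial[\kappa'](g,h) = F(g,h)$, which is the claimed Massey product. I expect the only delicate points to be keeping straight the two differing conventions (the twisted action on $\mf{U}_{\phi,\theta}(T)$ versus the block matrix multiplication in $\mc{U}_{n+2}(T,a+1)$) and verifying the elementary matrix identities $E_{1,b+1}\theta^{-1} = E_{1,b+1}$ and $\theta_h\theta_{gh}^{-1} = \theta_g^{-1}$; the genuinely clever move, and the one that makes everything collapse to an equality of cocycles rather than a mere cohomology, is the choice of the lift $\widetilde\kappa_g\theta_g^{-1}$, which converts the $\theta_g^{-1}$ of the twisted action into the $\theta_h$ of the defining system via the homomorphism property of $\theta$.
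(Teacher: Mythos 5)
Your proof is correct and follows essentially the same route as the paper's: the identical lift $\tilde{\kappa}' = \tilde{\kappa}\theta^{-1}$ of $\kappa'$, the same use of the cocycle condition \eqref{cocyclecond} for $\kappa = \kappa'\theta$, and the same two ingredients that $\theta$ is a homomorphism and that the $(1,b+1)$-copy of $T$ is fixed under right multiplication by $\mr{U}_{b+1}(R)$. The only cosmetic difference is that the paper factors the twisted coboundary all at once as $\bigl(\tilde{\kappa}(g)\theta(h) + \phi(g)g\tilde{\kappa}(h) - \tilde{\kappa}(gh)\bigr)\theta(gh)^{-1}$ and compares $(1,b+1)$-entries, whereas you substitute the $T$-valued defect of the cocycle relation and cancel termwise to get the full matrix identity $d_\star\tilde{\kappa}'(g,h) = F(g,h)\,E_{1,b+1}$.
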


\begin{proof}
	Let $\kappa = \kappa' \theta \colon G \to \mf{U}'_{\phi,\theta}(T)$, and let $\tilde{\kappa}$ be its unique lift 
	to $\mf{U}_{\phi,\theta}(T)$ with $\tilde{\kappa}(g)$ having zero $(1,b+1)$-entry for all $g \in G$. The map
	$\tilde{\kappa}' = \tilde{\kappa}\theta^{-1} \colon G \to \mf{U}_{\phi,\theta}(T)$ is then a lift of $\kappa'$.
   	 By definition, the image of $\kappa$ is represented by the $2$-cocycle that is given by taking
    	the $(1,b+1)$-entry of $d\tilde{\kappa}'$. 
	We have
    	\begin{align*}
    		d\tilde{\kappa}'(g,h) &= \tilde{\kappa}'(g) + g \star \tilde{\kappa}'(h) - \tilde{\kappa}'(gh) \\
		&= \tilde{\kappa}(g)\theta(g)^{-1} + \phi(g) g \tilde{\kappa}(h) \theta(h)^{-1} \theta(g)^{-1} - \tilde{\kappa}(gh)\theta(gh)^{-1}\\
		&= (\tilde{\kappa}(g)\theta(h) + \phi(g)g\tilde{\kappa}(h) - \tilde{\kappa}(gh))\theta(gh)^{-1}.
    	\end{align*}
 	Since $\kappa$ satisfies \eqref{cocyclecond}, we have $\tilde{\kappa}(g)\theta(h) + \phi(g)g\tilde{\kappa}(h) - \tilde{\kappa}(gh) \in T$,
	and $T$ is fixed under the action of right multiplication by an element of $\mr{U}_{b+1}(R)$.  Since $\tilde{\kappa}(gh)$
	has zero $(1,b+1)$-entry, the $(1,b+1)$-entries of 
    	$d\tilde{\kappa}'(g,h)$ and $\tilde{\kappa}(g)\theta(h) + \phi(g)g\tilde{\kappa}(h)$ are equal.
    
    	The Massey product $(\alpha_1,\dots,\alpha_a, \kappa_{a+1,1}, \beta_1,\dots,\beta_b)_{\rho}$ (and note that
	$\kappa_{a+1,1} = \kappa'_{a+1,1}$) is the $(1,n+2)$-entry of 
    	$\tilde{\rho}(g) \cdot g\tilde{\rho}(h)$,  
	where
	$$
		\tilde{\rho} = \ttmat{\phi}{\tilde{\kappa}}{0}{\theta}.
	$$
	The result then follows from the fact that
	$$
		\tilde{\rho}(g) \cdot g\tilde{\rho}(h) = \ttmat{\phi(g)}{\tilde{\kappa}(g)}{0}{\theta(g)}\ttmat{\phi(h)}{g\tilde{\kappa}(h)}{0}{\theta(h)}
		= \ttmat{\phi(gh)}{\phi(g)g\tilde{\kappa}(h) + \tilde{\kappa}(g)\theta(h)}{0}{\theta(gh)}.
	$$
\end{proof}

In fact, the proof of Theorem \ref{connecting} gives an explicit map $Z^1(G,\mf{U}'_{\phi,\theta}(T)) \to Z^2(G,T)$, taking
a $1$-cocycle $\kappa'$ to the $(1,b+1)$-entry of $d\tilde{\kappa}'$, for the specific lift $\tilde{\kappa}'$ of $\kappa'$ defined therein.

\section{Massey products as values of Bockstein maps} \label{massey}
We return to the setting and notation of Section \ref{sec:bock}.
We first discuss a general result that gives partial information about the generalized Bockstein map $\Psi^{(n)}$ in terms of Massey products. Then we discuss specific examples where this information completely determines $\Psi^{(n)}$.

\subsection{Partial defining systems and Bockstein maps}
\label{subsec:general partial defining systems}

Fix integers $a,b \ge 0$ such that $a+b=n$ and group homomorphisms
\begin{eqnarray*}
	\phi \colon H \to \mr{U}_{a+1}(R) &\mr{and}& \theta \colon H \to \mr{U}_{b+1}(R),
\end{eqnarray*}
so viewing $\phi$ and $\theta$ as maps from $G$ via precomposition with the quotient map, 
the pair $(\phi,\theta)$ is an $(a,b)$-partial defining system.  
We let $\alpha = (\phi_{i,i+1})_i$ and $\beta = (\theta_{i,i+1})_i$, so this partial defining system is of
Massey products $(\alpha, \sdot, \beta)$.
If $b = 0$, we often refer to the pair $(\phi,\theta)$ simply as $\phi$.

\begin{lemma} \label{mapnilpotent}
    Let $e \in \mathfrak{U}_{a,b}(R)$ be the matrix with $(a+1,1)$-entry equal to $1$ and all other entries $0$.
    There is a continuous $R\ps{G}$-module homomorphism $p_{\phi,\theta} \colon \Omega/I^{n+1} \to \mathfrak{U}_{a,b}(R)$ given on
    the cosets of images of group elements by 
    $$
    	p_{\phi,\theta}([h]) = \phi(h) \cdot e \cdot \theta(h)^{-1}.
    $$ 
    The image of $I^n$ is 
    contained in the submodule of matrices that are zero outside of their $(1,b+1)$-entries.
\end{lemma}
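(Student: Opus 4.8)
The plan is to construct $p_{\phi,\theta}$ directly from the $\Omega$-module structure on $\mathfrak{U}_{a,b}(R)$ and then to control its behaviour on the augmentation filtration by means of a weight filtration on matrix entries.

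First I would recall that, as an $R\ps{G}$-module, $\mathfrak{U}_{a,b}(R)=M_{a+1,b+1}(R)$ has a group element $h$ of $H$ acting by $x\mapsto\phi(h)\,x\,\theta(h)^{-1}$; since this action is continuous and $\mathfrak{U}_{a,b}(R)$ is $R$-finitely generated, it is a compact $\Omega$-module. As $\Omega$ is free of rank one as a left $\Omega$-module, the assignment $\omega\mapsto\omega\cdot e$ is the unique continuous $R\ps{G}$-module homomorphism $p_{\phi,\theta}\colon\Omega\to\mathfrak{U}_{a,b}(R)$ sending $1$ to $e$, and it is $R\ps{G}$-linear because $G$ acts through $H$ on both sides. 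Evaluating on a group element gives $p_{\phi,\theta}([h])=[h]\cdot e=\phi(h)\,e\,\theta(h)^{-1}$, matching the stated formula. Both remaining claims then reduce, by continuity and $R$-linearity, to understanding the images of the topological $R$-module generators $([h_1]-1)\cdots([h_m]-1)$ of $I^m$, namely the elements $\delta_{h_1}\cdots\delta_{h_m}(e)$, where $\delta_h$ denotes the operator $x\mapsto\phi(h)\,x\,\theta(h)^{-1}-x$ on $\mathfrak{U}_{a,b}(R)$.

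Next I would set up the weight filtration. To the position $(i,j)$ of an entry of $M_{a+1,b+1}(R)$ I attach the weight $w(i,j)=(a+1-i)+(j-1)$, and I let $F_w$ be the $R$-submodule of matrices whose $(i,j)$-entry vanishes whenever $w(i,j)<w$. The weight takes values in $\{0,1,\dots,n\}$, the value $0$ occurring only at $(a+1,1)$ and the value $n=a+b$ only at $(1,b+1)$ (since $w(i,j)=n$ forces $j-i=b$, hence $i=1$, $j=b+1$); thus $e\in F_0$, the module $F_n$ consists exactly of the matrices supported in the $(1,b+1)$-entry, and $F_{n+1}=0$. The key step is to show that each $\delta_h$ maps $F_w$ into $F_{w+1}$. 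Writing $\phi(h)=1+P$ and $\theta(h)^{-1}=1+Q$ with $P$ and $Q$ strictly upper triangular, one computes $\delta_h(x)=Px+xQ+PxQ$. Left multiplication by $P$ feeds the $(k,j)$-entry of $x$ into rows $i<k$, strictly increasing $a+1-i$ and hence the weight; right multiplication by $Q$ feeds the $(i,k)$-entry into columns $j>k$, strictly increasing $j-1$ and hence the weight; and $PxQ$ raises the weight by at least two. Therefore $\delta_h(F_w)\subseteq F_{w+1}$.

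With this in hand the proof finishes formally: applying $n+1$ of the operators $\delta_h$ to $e\in F_0$ lands in $F_{n+1}=0$, so $p_{\phi,\theta}$ annihilates the generators of $I^{n+1}$ and descends to a continuous $R\ps{G}$-module homomorphism on $\Omega/I^{n+1}$; applying $n$ of them lands in $F_n$, which is precisely the assertion that the image of $I^n$ lies in the submodule of matrices that are zero outside the $(1,b+1)$-entry. I expect the only genuine obstacle to be the weight bookkeeping in the key step, namely checking that both the left action of $P$ and the right action of $Q$ strictly raise $w$, together with the uniqueness of the top weight at $(1,b+1)$; the construction of $p_{\phi,\theta}$ and the descent are then purely formal.
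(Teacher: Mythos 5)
Your proof is correct and is essentially the paper's argument: the paper defines the same map via the conjugation action of $\mr{U}_{n+2}(R)$ on $\mathfrak{U}_{a,b}(R) \subseteq \mathfrak{U}_{n+2}(R)$ and shows inductively that $J^k\mathfrak{U}_{n+2}(R)$ (for $J$ the augmentation ideal of $R\ps{\mr{U}_{n+2}(R)}$) consists of matrices vanishing at positions with $j-i\le k$, which under the block embedding $(i,j)\mapsto(i,a+1+j)$ is precisely your weight filtration $F_w$, the distance from the diagonal being $w+1$. Your computation $\delta_h(x)=Px+xQ+PxQ$ with $P,Q$ strictly upper triangular is the same mechanism as the paper's inductive step, carried out intrinsically on $M_{a+1,b+1}(R)$ rather than inside $\mathfrak{U}_{n+2}(R)$.
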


\begin{proof}
The map $\tilde{p}_{\phi,\theta} \colon \Omega \to \mathfrak{U}_{a,b}(R)$ inducing $p_{\phi,\theta}$ is given by the action of $H$ on $e \in \mathfrak{U}_{a,b}(R)$ via the composite homomorphism
\[
H \xrightarrow{\rho_{\phi,\theta}} \mr{U}_{n+2}(R) \xrightarrow{\mathrm{ad}} \Aut(\mathfrak{U}_{a,b}(R)),
\]
where $\rho_{\phi,\theta} \colon  H \to \mr{U}_{n+2}(R)$ is given by
$$
	\rho_{\phi,\theta}(h) = \ttmat{\phi(h)}{0}{0}{\theta(h)}
$$ 
and $\mathrm{ad}$ denotes the conjugation action. The action of $G$ on $\Omega$ is given by the homomorphism $G \to H$, and the action of $G$ on $\mathfrak{U}_{a,b}(R)$ is given by the composite of this map with $H \to \Aut(\mathfrak{U}_{a,b}(R))$, so $\tilde{p}_{\phi,\theta}$ is $G$-equivariant.  We must show it factors through $\Omega/I^{n+1}$.

Let $J \subset R\ps{\mr{U}_{n+2}(R)}$ be the augmentation ideal.  Since the $H$-action factors through $\mr{U}_{n+2}(R)$, we have $I^k\mathfrak{U}_{a,b}(R) \subseteq J^k\mf{U}_{a,b}(R)$ for all $k$.  It is easy to see inductively that
\[
J^k\mathfrak{U}_{n+2}(R) = \{ (a_{ij}) \in M_{n+2}(R) \ | \ a_{ij}=0 \text{ if } j-i \le k\}.
\]
In particular, $J^{n+1}\mathfrak{U}_{n+2}(R)=0$ and 
\[
J^n\mathfrak{U}_{n+2}(R)= \{ (a_{ij}) \in M_{n+2}(R) \ | \ a_{ij}=0 \text{ if } (i,j) \ne (1,n+2)\}.
\]
Still viewing $\mf{U}_{a,b}(R)$ as a subgroup of $\mf{U}_{n+2}(R)$, the containments 
\[
I^k\mathfrak{U}_{a,b}(R) \subseteq J^k\mathfrak{U}_{a,b}(R) \subseteq J^k\mathfrak{U}_{n+2}(R)
\]
imply the result.
\end{proof}

Lemma \ref{mapnilpotent} implies that there is a map of short exact sequences of $R\ps{G}$-modules
\begin{equation}
\label{eq:general diagram}
\begin{tikzcd}
0 \arrow[r] & T \otimes_R I^n/I^{n+1} \arrow[r] \arrow{d}{p_{\phi,\theta}} & T \otimes_R \Omega/I^{n+1} \arrow[r] \arrow{d}{p_{\phi,\theta}} & T \otimes_R \Omega/I^n \arrow[r] \arrow{d}{p_{\phi,\theta}} & 0 \\
0 \arrow[r] & T \arrow[r] & \mf{U}_{\phi,\theta}(T) \arrow[r] & \mf{U}'_{\phi,\theta}(T) \arrow[r] & 0,
\end{tikzcd}
\end{equation}
where $p_{\phi,\theta}$ is the tensor product with $T$ of the map in Lemma \ref{mapnilpotent} coming from $\rho_{\phi,\theta}$.  As a direct consequence of this commutativity and Theorem \ref{connecting}, we have the following.

\begin{theorem}
\label{prop:general}
Let $\phi \colon G \to \mr{U}_{a+1}(R)$ and $\theta \colon G \to \mr{U}_{b+1}(R)$ restrict to $\alpha \in Z^1(G,R)^a$ and $\beta \in 
Z^1(G,R)^b$ as above.
Let $f \in Z^1(G,T \otimes_R \Omega/I^n)$, and let $\rho$ denote the proper defining system relative to $(\phi,\theta)$ 
associated to $p_{\phi,\theta} \circ f$ by Lemma \ref{correspondence}.  Then we have
\[
p_{\phi,\theta} (\Psi^{(n)}([f])) = (\alpha,(p_{\phi,\theta} \circ f)_{a+1,1},\beta)_{\rho}
\]
in $H^2(G,T)$.  Here, the maps $p_{\phi,\theta}$ on the left and right are those induced on cohomology by
the left and right vertical maps in \eqref{eq:general diagram}.
\end{theorem}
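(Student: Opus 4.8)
The plan is to deduce the theorem directly from the compatibility of connecting homomorphisms under the map of short exact sequences in \eqref{eq:general diagram}, combined with the identification of the bottom connecting map as a Massey product from Theorem \ref{connecting}. The key observation is that the generalized Bockstein map $\Psi^{(n)}$ is by definition (up to the isomorphism \eqref{moveout}) the connecting homomorphism in $G$-cohomology attached to the top row of \eqref{eq:general diagram}, while the connecting homomorphism attached to the bottom row is, by Theorem \ref{connecting}, the Massey product associated to a proper defining system.

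First I would invoke the naturality of connecting homomorphisms with respect to the morphism of short exact sequences \eqref{eq:general diagram}. This yields a commutative square relating $\Psi^{(n)}$ for the top row to the connecting map $\partial$ for the bottom row, with the vertical maps being those induced by $p_{\phi,\theta}$ on $H^1$ of the quotient terms and on $H^2$ of the sub-terms. Concretely, applying $\partial$ to the image $p_{\phi,\theta}\circ f \in Z^1(G,\mf{U}'_{\phi,\theta}(T))$ of $f$ gives the same class in $H^2(G,T)$ as first applying $\Psi^{(n)}$ to $[f]$ and then pushing forward by $p_{\phi,\theta}$; here I should be careful that the map $\Omega/I^{n+1}\to\mf{U}_{\phi,\theta}(T)$ restricts on $I^n/I^{n+1}$ to the map $T\to T$ landing in the $(1,b+1)$-entry (this is exactly the content of the last sentence of Lemma \ref{mapnilpotent}, namely that $I^n$ maps into the matrices supported in the upper-right corner), so that the induced map on $H^2(G,T)$ matches the left vertical map claimed in the statement.

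Next I would apply Theorem \ref{connecting} to identify $\partial([p_{\phi,\theta}\circ f])$ with the Massey product. Setting $\kappa' = p_{\phi,\theta}\circ f \in Z^1(G,\mf{U}'_{\phi,\theta}(T))$ and letting $\rho$ be the associated proper defining system from Lemma \ref{correspondence}, Theorem \ref{connecting} gives $\partial([\kappa']) = (\alpha,\kappa'_{a+1,1},\beta)_{\rho}$, where $\kappa'_{a+1,1} = (p_{\phi,\theta}\circ f)_{a+1,1}$. Combining this with the commutative square from the previous step yields exactly the asserted identity $p_{\phi,\theta}(\Psi^{(n)}([f])) = (\alpha,(p_{\phi,\theta}\circ f)_{a+1,1},\beta)_{\rho}$ in $H^2(G,T)$.

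The main obstacle I anticipate is the bookkeeping needed to verify that the various maps labeled $p_{\phi,\theta}$ genuinely fit into a commuting square of connecting homomorphisms — in particular, confirming that the identification $\Omega/I \cong R$ together with the definition of $\mf{U}'_{\phi,\theta}(T)$ makes $p_{\phi,\theta}\circ f$ land in the correct cocycle group, and that the $(a+1,1)$-entry of $p_{\phi,\theta}\circ f$ is the $1$-cocycle $\lambda\colon G\to T$ that serves as the middle input of the Massey product. Since this compatibility is precisely what the commutativity of \eqref{eq:general diagram} encodes, the proof reduces to citing that diagram and Theorem \ref{connecting}, as the statement itself indicates by calling the result a \emph{direct consequence}; the remaining work is purely a matter of tracking how $p_{\phi,\theta}$ acts on cohomology via the left and right vertical maps.
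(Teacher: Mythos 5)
Your proposal is correct and follows exactly the paper's argument: the paper states the theorem as a direct consequence of the commutativity of the map of short exact sequences \eqref{eq:general diagram} (established via Lemma \ref{mapnilpotent}) together with Theorem \ref{connecting}, which is precisely your combination of naturality of connecting homomorphisms with the identification of the bottom connecting map as the Massey product. Your bookkeeping remarks (that $I^n$ maps into the $(1,b+1)$-corner and that $\kappa'_{a+1,1}=(p_{\phi,\theta}\circ f)_{a+1,1}$) are the same checks the paper implicitly relies on, so nothing is missing.
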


We will give examples of groups $H$ and integers $n$ such that there is a set $X$ of choices of $(\phi,\theta)$ for which the map
\[
	H^2(G,T) \otimes_R I^n/I^{n+1} \xrightarrow{\prod_{(\phi,\theta) \in X} p_{\phi,\theta}} \prod_{(\phi, \theta) \in X} H^2(G,T)
\]
is injective. In such cases, Theorem \ref{prop:general} shows that the generalized Bockstein map $\Psi^{(n)}$ is determined by Massey products. In the rest of this section, we consider some specific examples in detail.

\subsection{Unipotent binomial matrices}
\label{subsec:unipotent binomials}

We introduce the \emph{unipotent binomial matrices}, which are a source of many partial defining systems. Let $n$ denote a positive integer, and let $p$ be a prime number.  

Let $u_n$ denote the $(n+1)$-dimensional nilpotent upper triangular matrix 
\[
	u_n = \begin{pmatrix}
		0 & 1 & 0 & \cdots & 0 \\
		& 0 & 1 & \ddots & \vdots  \\
		&& 0 & \ddots & 0 \\
		&&& \ddots & 1  \\
		&&&& 0 
	\end{pmatrix}.
\]
For any $k \ge 1$, the matrix $u_n^k$ has $(i,j)$-entry $1$ if $j-i=k$ and $0$ otherwise.  In particular, we have $u_n^{n+1} = 0$.
 
Let $\binmat{\sdot}{n} \colon \zp \to \mr{U}_{n+1}(\zp)$ denote the unique continuous homomorphism to $(n+1)$-dimensional unipotent matrices with $\zp$-entries such that $\binmat{1}{n}=1+u_n$. By the binomial theorem, for $a\in \Z$ we have
\[
	\binmat{a}{n} = (1+u_n)^a = \sum_{k=0}^n \binom{a}{k} u_n^k
	= \begin{pmatrix}
		1 & a & \binom{a}{2} & \cdots & \binom{a}{n} \\
		& 1 & a & \ddots & \vdots  \\
		&& 1 & \ddots & \binom{a}{2} \\
		&&& \ddots & a  \\
		&&&& 1 
	\end{pmatrix}.
\]
If $t \ge s$ and $n < p^{t-s+1}$, then the composite map
\[
\Z \xrightarrow{\binmat{\sdot}{n}} \mr{U}_{n+1}(\Z_p) \to \mr{U}_{n+1}(\Z/p^s\Z)
\]
that sends $a$ to $(1+u_n)^a$ modulo $p^s$ factors through $\Z/p^t\Z$ by Lemma \ref{lem:binomial} applied with $x = u_n$.  By abuse of notation, we again denote the resulting map $\Z/p^t\Z \to  \mr{U}_{n+1}(\Z/p^s\Z)$ by $\binmat{\sdot}{n}$. 
In particular, the map $\binom{\sdot}{n} \colon \Z \to \Z/p^s\Z$ given by $a \mapsto \binom{a}{n} \pmod{p^s}$  factors through $\Z/p^t\Z$, and we abuse notation to also denote the resulting map $\Z/p^t\Z \to \Z/p^s\Z$ by $\binom{\sdot}{n}$.

The following lemma, phrased conveniently for our purposes, summarizes the above discussion.

\begin{lemma} \label{binommap}
	Let $A$ be a quotient of the ring $\zp$ and $R$ be a quotient of $A$.  Let $H$ be a profinite group and 
	$\chi \colon H \to A$ be a continuous homomorphism.  
	Suppose that either $A = \zp$ or $|R| < \frac{p}{n} |A|$.  Then there is a homomorphism
	$$
		\binmat{\chi}{n} \colon H \to \mr{U}_{n+1}(R),
	$$
	defined by $\binmat{\chi}{n}(h) = \binmat{\chi(h)}{n}$ for all $h\in H$.
\end{lemma}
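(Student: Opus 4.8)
The plan is to show that Lemma \ref{binommap} is essentially a repackaging of the discussion immediately preceding it, recording which hypotheses are needed to guarantee that the map $\binmat{\sdot}{n}$ descends appropriately. First I would observe that the statement factors through two steps: a factorization of $\binmat{\sdot}{n} \colon \zp \to \mr{U}_{n+1}(\zp)$ through a finite quotient when $A$ is finite, followed by a reduction of coefficients from $\zp$ to $R$. The content is entirely the first step; the coefficient reduction is just functoriality of $\mr{U}_{n+1}(\sdot)$ applied to the quotient map $\zp \to R$.

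\medskip

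I would split into the two cases dictated by the hypothesis. If $A = \zp$, then $\chi \colon H \to \zp$ is a continuous homomorphism into $\zp$, and I simply precompose it with $\binmat{\sdot}{n} \colon \zp \to \mr{U}_{n+1}(\zp)$ and then reduce entries modulo the kernel of $\zp \to R$. The resulting composite $H \to \mr{U}_{n+1}(R)$ is a continuous homomorphism, and by construction sends $h$ to $\binmat{\chi(h)}{n}$, so there is nothing further to check. If instead $A$ is a proper finite quotient $\Z/p^t\Z$ of $\zp$, then I must produce a well-defined homomorphism $\binmat{\sdot}{n} \colon \Z/p^t\Z \to \mr{U}_{n+1}(R)$. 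Writing $R = \Z/p^s\Z$ as a quotient of $A = \Z/p^t\Z$, the hypothesis $|R| < \frac{p}{n}|A|$ unwinds to $p^s < \frac{p}{n} p^t$, i.e. $n < p^{t-s+1}$, which is exactly the inequality appearing in Lemma \ref{lem:binomial} and in the discussion preceding the present lemma.

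\medskip

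With that inequality in hand, the factorization is precisely the one established in the paragraph before the lemma: the composite $\Z \xrightarrow{\binmat{\sdot}{n}} \mr{U}_{n+1}(\zp) \to \mr{U}_{n+1}(\Z/p^s\Z)$ factors through $\Z/p^t\Z$ by Lemma \ref{lem:binomial} applied with $x = u_n$, since $(1+u_n)^{p^t} - 1 \in (u_n^{n+1}, p^s) = (p^s)$ in $M_{n+1}(\zp)$ once $u_n^{n+1} = 0$. Thus $a \mapsto (1+u_n)^a \bmod p^s$ depends only on $a \bmod p^t$, giving the desired map $\Z/p^t\Z \to \mr{U}_{n+1}(R)$. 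Precomposing with $\chi \colon H \to A = \Z/p^t\Z$ and using continuity of $\chi$ together with the discreteness of the finite target yields the continuous homomorphism $\binmat{\chi}{n} \colon H \to \mr{U}_{n+1}(R)$ with $\binmat{\chi}{n}(h) = \binmat{\chi(h)}{n}$.

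\medskip

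I do not anticipate a genuine obstacle here, since the lemma is a formal summary of a computation already carried out. The only point requiring care is the bookkeeping translating the cardinality hypothesis $|R| < \frac{p}{n}|A|$ into the binomial-divisibility inequality $n < p^{t-s+1}$, and confirming that this is uniform across the two cases so that the single statement covers both $A = \zp$ and $A$ finite. Everything else is an application of Lemma \ref{lem:binomial} and functoriality of the unipotent matrix group under ring quotients.
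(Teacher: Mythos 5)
Your proposal is correct and matches the paper's argument: the paper's own proof consists precisely of the observation that $|R| = p^s$ and $|A| = p^t$ make the hypothesis $|R| < \frac{p}{n}|A|$ equivalent to $n < p^{t-s+1}$, with the factorization of $\binmat{\sdot}{n}$ through $\Z/p^t\Z$ (via Lemma \ref{lem:binomial} applied with $x = u_n$) and the reduction of coefficients already established in the discussion preceding the lemma. You have simply made explicit the same ingredients the paper leaves implicit, including the trivial case $A = \zp$.
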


\begin{proof}
	If $|R| = p^s$ and $|A| = p^t$, then $|R| < \frac{p}{n}|A|$ if and only if $n < p^{t-s+1}$.
\end{proof}

\subsection{Procyclic case} \label{procyclic}

In this subsection, we fix a surjective homomorphism $\chi \colon G \to A$, where $A$ is a nonzero quotient of $\Z_p$.  We suppose that our ring $R$
is a nonzero quotient of $A$ with $A = \zp$ or $n|R| < p|A|$. We define $H$ to be the coimage of $\chi$, so $H \cong A$. We fix $h \in H$ to be the preimage of $1 \in A$ and let $x=[h]-1 \in \Omega$, which is a generator of the augmentation ideal $I$. Our assumption on the size of $R$ implies that $\Omega/I^j \cong R[x]/(x^j)$ for all $j \le n+1$ by the discussion of Section \ref{abelian}. In particular, we have $I^n/I^{n+1} = R x^n$.

The $(n,0)$-proper defining systems relative to $\phi = \binmat{\chi}{n}$ and the trivial map $\theta$ to $\mr{U}_1(R) = \{1\}$ agree with the proper defining systems considered in \cite{sh-massey} for Galois groups.  We give an interpretation of the resulting Massey products in terms of generalized Bockstein maps.  That is, let us apply the discussion of Section \ref{subsec:general partial defining systems} to this situation.  We have $\alpha=(\chi,\dots,\chi) \in Z^1(G,R)^n$, which we denote by $\chi^{(n)}$. We denote $\mf{U}_{\phi,\theta}(T)$ by $\mf{U}_{\binmat{\chi}{n}}(T)$.
 
Set $p_n = p_{\binmat{\chi}{n},0}$ for brevity.  Then the diagram \eqref{eq:general diagram} becomes
\[
\begin{tikzcd}
0 \arrow[r] & T \cdot x^n \arrow[r] \arrow{d} & T \otimes_R \Omega/I^{n+1} \arrow[r] \arrow{d}{p_n} & T \otimes_R \Omega/I^n \arrow[r] \arrow{d}{p_n} & 0 \\
0 \arrow[r] & T \arrow[r] & \mf{U}_{\binmat{\chi}{n}}(T) \arrow[r] & \mf{U}'_{\binmat{\chi}{n}}(T) \arrow[r] & 0,
\end{tikzcd}
\]
where $p_n$ is the map attached to $(\binmat{\chi}{n},0)$ by Lemma \ref{mapnilpotent}.
Explicitly, the vector $p_n(\sum_{k=0}^n a_k x^k)$ in $M_{n+1,1}(T)$ has $i$th entry $a_{n+1-i}$.  (See
the more general case proven in Lemma \ref{bicycform} of the next subsection.)

By Lemma \ref{correspondence}, it follows that the proper defining system $\rho_{x^n}$ relative to $\binmat{\chi}{n}$ that is attached to $p_n \circ f$, where
$$
	f = \sum_{k=0}^{n-1} \lambda_k x^k \in Z^1(G,T \otimes_R \Omega/I^n),
$$ 
satisfies $(\rho_{x^n})_{n+1-k,n+2} = \lambda_k$ for $0 \le k \le n-1$.   In particular, the element $\lambda = \lambda_0 = (\rho_{x^n})_{n+1,n+2}$ is the image of $f$ under the map 
$$
	Z^1(G,T \otimes_R \Omega/I^n) \to Z^1(G,T)
$$ 
induced by the augmentation $\Omega/I^n \mapsto \Omega/I = R$.  

Theorem \ref{prop:general} then gives us an explicit description of the values of the generalized Bockstein homomorphism on classes in $H^1(G,T \otimes_R \Omega/I^n)$ as Massey products $(\chi^{(n)},\sdot)$ relative to $\binmat{\chi}{n}$, as follows.

\begin{theorem} \label{cyclic_case}
	For $f \in Z^1(G,T \otimes_R \Omega/I^n)$, we have
	$$
		\Psi^{(n)}([f]) = (\chi^{(n)},\lambda)_{\rho_{x^n}} \cdot x^n,
	$$
	where $\rho_{x^n}$ is the proper defining system relative to $\binmat{\chi}{n}$ attached to $f$, and
	$\lambda$ is the image of $f$ in $Z^1(G,T)$.
\end{theorem}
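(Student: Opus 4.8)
The plan is to obtain Theorem \ref{cyclic_case} as the specialization of Theorem \ref{prop:general} to the procyclic partial defining system $(\binmat{\chi}{n},0)$, for which $a=n$, $b=0$, and $\theta$ is the trivial map to $\mr{U}_1(R)=\{1\}$. With these choices $\alpha=\chi^{(n)}$ and the tuple $\beta$ is empty, so Theorem \ref{prop:general} reads
$$
p_n\bigl(\Psi^{(n)}([f])\bigr) = \bigl(\chi^{(n)},(p_n\circ f)_{n+1,1}\bigr)_{\rho_{x^n}}
$$
in $H^2(G,T)$. First I would invoke the identification already recorded in the discussion preceding the theorem, namely that $(p_n\circ f)_{n+1,1}$ equals $\lambda_0=\lambda$, the image of $f$ under the augmentation $\Omega/I^n\to R$; this rewrites the right-hand side as the Massey product $(\chi^{(n)},\lambda)_{\rho_{x^n}}$.

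The remaining and essential point is to identify the left-hand map $p_n$ on the top graded quotient. Since $I^n/I^{n+1}=Rx^n$ is free of rank one over $R$, I would write $\Psi^{(n)}([f])=c\cdot x^n$ for a unique $c\in H^2(G,T)$ and show that $p_n$ carries this class to $c$. For this I would compute $p_n$ on $I^n/I^{n+1}$ directly: from the explicit description $p_n(\sum_k a_kx^k)_i=a_{n+1-i}$ one gets $p_n(x^n)=e_1$, the generator of the $(1,1)$-block-entry, which is precisely the copy of the central $T$ inside $\mf{U}_{\binmat{\chi}{n}}(T)$ appearing in the bottom row of the diagram \eqref{eq:general diagram}. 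Hence the left vertical map $T\otimes_R I^n/I^{n+1}\to T$ of that diagram is the canonical isomorphism $t\otimes x^n\mapsto t$; applying $H^2(G,-)$ and composing with \eqref{moveout} shows that $p_n\colon H^2(G,T)\otimes_R I^n/I^{n+1}\to H^2(G,T)$ is the isomorphism sending $c\cdot x^n\mapsto c$.

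Combining the two displays then yields $c=(\chi^{(n)},\lambda)_{\rho_{x^n}}$, so that $\Psi^{(n)}([f])=(\chi^{(n)},\lambda)_{\rho_{x^n}}\cdot x^n$, as claimed. I expect no serious obstacle here: the substance of the argument was already carried out in Theorems \ref{connecting} and \ref{prop:general}, and the only genuine task is the bookkeeping that pins down $p_n(x^n)$ as the generator of the central $T$, which is what makes $p_n$ an \emph{isomorphism} on the relevant rank-one graded piece rather than merely some map. The one place warranting care is confirming that this left vertical map is $G$-equivariant and an isomorphism of $R\ps{G}$-modules, so that passing to $H^2$ is legitimate; this follows from Lemma \ref{mapnilpotent} together with the freeness of $I^n/I^{n+1}$ over $R$.
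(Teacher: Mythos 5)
Your proposal is correct and is essentially the paper's own argument: the paper likewise obtains Theorem \ref{cyclic_case} as the specialization of Theorem \ref{prop:general} to the partial defining system $(\binmat{\chi}{n},0)$, using the explicit formula $p_n(\sum_k a_k x^k)_i = a_{n+1-i}$ (the $b=0$ case of Lemma \ref{bicycform}) to identify $(p_n\circ f)_{n+1,1}=\lambda_0=\lambda$ via Lemma \ref{correspondence}. Your extra bookkeeping that $p_n$ restricts on $T\otimes_R I^n/I^{n+1}=T\cdot x^n$ to the canonical isomorphism $t\cdot x^n\mapsto t$ is exactly the implicit content of the left vertical arrow in the paper's specialized diagram, and your index computation $p_n(x^n)=e_1$ checks out.
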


In particular, we have the following description of the image of $\Psi^{(n)}$.

\begin{corollary} \label{image_cyclic_case}
	The image of the generalized Bockstein map $\Psi^{(n)}$ is the set of all $(\chi^{(n)},\lambda)_{\rho} \cdot x^n$ for 
	Massey products of $n$ copies of $\chi$ with $1$-cocycles $\lambda \in Z^1(G,T)$ for proper defining systems $\rho$
	relative to $\binmat{\chi}{n}$ with $\rho_{n+1,n+2} = \lambda$.
\end{corollary}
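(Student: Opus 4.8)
The plan is to deduce this directly from Theorem \ref{cyclic_case}, reducing the statement to the assertion that the construction $f \mapsto \rho_{x^n}$ is surjective onto the set of all proper defining systems relative to $\binmat{\chi}{n}$. One inclusion is immediate: Theorem \ref{cyclic_case} says that for each $f \in Z^1(G, T\otimes_R \Omega/I^n)$ we have $\Psi^{(n)}([f]) = (\chi^{(n)}, \lambda)_{\rho_{x^n}} \cdot x^n$, where $\lambda$ is the image of $f$ in $Z^1(G,T)$ and $\rho_{x^n}$ is the associated proper defining system, whose $(n+1,n+2)$-entry is precisely $\lambda$. Hence every element of the image of $\Psi^{(n)}$ lies in the described set, and it remains to produce, for each proper defining system $\rho$ relative to $\binmat{\chi}{n}$, an $f$ with $\rho_{x^n} = \rho$.

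For this reverse inclusion I would first check that the map $p_n \colon T \otimes_R \Omega/I^n \to \mf{U}'_{\binmat{\chi}{n}}(T)$ is an isomorphism of $R\ps{G}$-modules. By the explicit formula recalled in the procyclic case (the general version being Lemma \ref{bicycform}), $p_n$ sends $\sum_{k=0}^{n-1} a_k x^k$ to the class in $M'_{n+1,1}(T)$ of the column vector whose $i$th entry is $a_{n+1-i}$. Since $\Omega/I^n \cong R[x]/(x^n)$ is $R$-free on $1, x, \ldots, x^{n-1}$ and $M'_{n+1,1}(T) = M_{n+1,1}(T)/T$ is identified with the entries in rows $2, \ldots, n+1$, this formula exhibits $p_n$ as the $R$-linear reversal sending $(a_0,\ldots,a_{n-1})$ to the tuple of entries $(a_{n-1}, \ldots, a_0)$, which is a bijection of $R$-modules; and it is $G$-equivariant by Lemma \ref{mapnilpotent}. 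Consequently $p_n$ induces a bijection on $1$-cocycles $Z^1(G, T\otimes_R\Omega/I^n) \to Z^1(G, \mf{U}'_{\binmat{\chi}{n}}(T))$.

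Composing this with the bijection of Lemma \ref{correspondence} between $Z^1(G, \mf{U}'_{\binmat{\chi}{n}}(T))$ and proper defining systems relative to $\binmat{\chi}{n}$, I conclude that $f \mapsto \rho_{x^n}$ is a bijection of $Z^1(G, T\otimes_R\Omega/I^n)$ onto the set of all such proper defining systems. Because the $(n+1,n+2)$-entry of $\rho_{x^n}$ is the image $\lambda$ of $f$ in $Z^1(G,T)$, every proper defining system $\rho$ relative to $\binmat{\chi}{n}$ with $\rho_{n+1,n+2} = \lambda$ arises as $\rho_{x^n}$ for a suitable $f$, whence $(\chi^{(n)}, \lambda)_\rho \cdot x^n = \Psi^{(n)}([f])$ lies in the image. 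This closes the reverse inclusion and proves the equality of sets.

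The only content beyond Theorem \ref{cyclic_case} is the surjectivity of $f \mapsto \rho_{x^n}$, which rests entirely on $p_n$ being an isomorphism. I expect this to be the main point, though a modest one: the $G$-equivariance is already furnished by Lemma \ref{mapnilpotent}, so the verification amounts to observing that the explicit reversal map on coefficients is an $R$-linear bijection once one matches the basis $1, x, \ldots, x^{n-1}$ of $\Omega/I^n$ with the rows $2, \ldots, n+1$ of $M'_{n+1,1}(T)$.
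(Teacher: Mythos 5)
Your proposal is correct and matches the paper's (implicit) argument: the paper states this corollary as an immediate consequence of Theorem \ref{cyclic_case}, with the reverse inclusion resting on exactly the two facts you verify, namely that $p_n$ is the coefficient-reversal isomorphism (noted in the text before Theorem \ref{cyclic_case}, with the general case in Lemma \ref{bicycform}) and that Lemma \ref{correspondence} gives a bijection onto proper defining systems, so that $f \mapsto \rho_{x^n}$ hits every proper defining system relative to $\binmat{\chi}{n}$. Your write-up simply makes explicit what the paper leaves to the reader.
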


Theorem \ref{augfilt} provides the following application to the graded quotients of Iwasawa cohomology groups of $N = \ker (\chi
\colon G \to H)$.

\begin{corollary}
	Suppose that $G$ is $p$-cohomologically finite of $p$-cohomological dimension $2$.
	Let $P_n(H)$ denote the subgroup of $H^2(G,T) \otimes_R I^n/I^{n+1}$ generated by all 
	$(\chi^{(n)},\lambda)_{\rho} \cdot x^n$ for proper defining systems $\rho$ relative to
	$\binmat{\chi}{n}$ and $\lambda = \rho_{n+1,n+2}$.
	We have a canonical isomorphism of $R$-modules
	$$
		\frac{I^nH^2_{\Iw}(N,T)}{I^{n+1}H^2_{\Iw}(N,T)} \cong \frac{H^2(G,T) \otimes_R I^n/I^{n+1}}{P_n(H)}.
	$$
\end{corollary}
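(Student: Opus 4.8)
The plan is to deduce this corollary directly from the $d=2$ case of Theorem \ref{augfilt} together with the preceding corollary describing the image of $\Psi^{(n)}$, so that essentially no new computation is required. First I would check that the standing hypotheses of Section \ref{descent} are satisfied in the present procyclic setting, so that Theorem \ref{augfilt} applies with $d=2$. By assumption $G$ is $p$-cohomologically finite of $p$-cohomological dimension $2$. The ring $R$ is a nonzero quotient of $A$, hence of $\zp$, so it is one of $\zp$ or $\Z/p^s\Z$ and in particular a complete local Noetherian $\zp$-algebra with finite residue field. Finally $H \cong A$ is either $\zp$ or a finite cyclic $p$-group; in both cases it is a compact $p$-adic Lie group, so condition (i) holds and Proposition \ref{spectral} is available. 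Thus Theorem \ref{augfilt} yields the canonical isomorphism of $R$-modules
$$
	\frac{I^nH^2_{\Iw}(N,T)}{I^{n+1}H^2_{\Iw}(N,T)} \cong \frac{H^2(G,T) \otimes_R I^n/I^{n+1}}{\image \Psi^{(n)}}.
$$

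It then remains only to identify $\image \Psi^{(n)}$ with $P_n(H)$. Since $\Psi^{(n)}$ is a homomorphism of $R$-modules, its image is automatically an $R$-submodule of $H^2(G,T)\otimes_R I^n/I^{n+1}$. The corollary immediately preceding the statement (which itself rests on Theorem \ref{cyclic_case}) asserts that this image consists precisely of the elements $(\chi^{(n)},\lambda)_\rho \cdot x^n$, as $\lambda$ ranges over $Z^1(G,T)$ and $\rho$ over proper defining systems relative to $\binmat{\chi}{n}$ with $\rho_{n+1,n+2}=\lambda$. By definition $P_n(H)$ is the $R$-submodule generated by exactly this collection of elements; since the collection already coincides with the submodule $\image\Psi^{(n)}$, the submodule it generates is $\image\Psi^{(n)}$ itself, giving $P_n(H)=\image\Psi^{(n)}$. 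Substituting this equality into the displayed isomorphism yields the claim.

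I do not anticipate a genuine obstacle, as the mathematical content is carried entirely by Theorem \ref{augfilt} and the description of $\image\Psi^{(n)}$ via Theorem \ref{cyclic_case}. The only points meriting care are the two verifications above: that the hypotheses of Theorem \ref{augfilt} indeed hold in the procyclic case (in particular that $H\cong A$ is a compact $p$-adic Lie group even when $A$ is finite), and that the generating set for $P_n(H)$ is already closed under the $R$-module operations. The latter is immediate once one observes that this set equals the image of the homomorphism $\Psi^{(n)}$, so that taking the subgroup it generates changes nothing.
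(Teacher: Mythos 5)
Your proposal is correct and follows exactly the route the paper intends: the paper states this corollary without proof, noting only that it is an application of Theorem \ref{augfilt} combined with the preceding corollary's description of $\image\Psi^{(n)}$ as the set of Massey product values $(\chi^{(n)},\lambda)_{\rho}\cdot x^n$. Your added verifications---that $H\cong A$ is a compact $p$-adic Lie group even when finite, and that the generating set for $P_n(H)$ is already the $R$-submodule $\image\Psi^{(n)}$---are exactly the right points of care and are handled correctly.
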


\subsection{Pro-bicyclic case} \label{bicyc_case}

In this subsection, we
\begin{itemize}
\item fix a surjective homomorphism $(\chi,\psi) \colon G \twoheadrightarrow A \times B$, where $A$ and $B$ are nonzero quotients of $\Z_p$,
\item let $a,b \ge 0$ denote integers such that $a+b=n$, and
\item suppose that 
$R$ is a nonzero quotient of both $A$ and $B$ with $a|R| < p|A|$ if $A$ is finite and $b|R| < p|B|$ if $B$ is finite.
\end{itemize}
Let $H$ be the coimage of $(\chi,\psi)$ so that $H \cong A \times B$.   
Let $h_A,h_B \in H$ be the preimages of $(1,0),(0,1) \in A \times B$, respectively, and let $x = [h_A]-1$ and $y=[h_B]-1$
so that $(x,y)$ is the augmentation ideal $I$ of $\Omega = R\ps{H}$. We have $\Omega/I^j = R[x,y]/(x,y)^j$ for all $j \le n$. In particular, we have
\[
I^n/I^{n+1} = \bigoplus_{i+j=n} R x^iy^j.
\]

We apply the discussion of Section \ref{subsec:general partial defining systems} to this situation. We take $\phi = \binmat{\chi}{a} \colon H \to \mr{U}_a(R)$ and $\theta = \binmat{\psi}{b} \colon H \to \mr{U}_b(R)$.  We have $\alpha=\chi^{(a)} \in Z^1(G,R)^a$ and $\beta=\psi^{(b)} \in Z^1(G,R)^b$.

Set $p_{a,b} = p_{\binmat{\chi}{a},\binmat{\psi}{b}}$ for brevity.
In this setting, the diagram \eqref{eq:general diagram} becomes
\begin{equation} \label{bicyc_diag}
\begin{tikzcd}
0 \arrow[r] &  \bigoplus_{i+j=n} T \cdot x^iy^j  \arrow[r] \arrow{d} & T \otimes_R \Omega/I^{n+1} \arrow[r] \arrow{d}{p_{a,b}} & T \otimes_R \Omega/I^n \arrow[r] \arrow{d}{p_{a,b}} & 0 \\
0 \arrow[r] & T \arrow[r] & \mf{U}_{\binmat{\chi}{a},\binmat{\psi}{b}}(T) \arrow[r] & \mf{U}'_{\binmat{\chi}{a},\binmat{\psi}{b}}(T) \arrow[r] & 0.
\end{tikzcd}
\end{equation}

\begin{lemma} \label{bicycform}
	The $R\ps{G}$-module map $p_{a,b} \colon T \otimes_R \Omega/I^{n+1} \to \mf{U}_{\binmat{\chi}{a},\binmat{\psi}{b}}(T)$ 
	is an isomorphism satisfying
	$$
		p_{a,b}\left(\sum_{k_1+k_2 \le n} c_{k_1,k_2} x^{k_1}y^{k_2}\right)
		= (c_{a+1-i,j-1})_{i,j}.
	$$
	In particular, the left-hand vertical map in \eqref{bicyc_diag} is 
	given by projection onto the factor $T \cdot x^a y^b \cong T$.
\end{lemma}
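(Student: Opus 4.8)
The plan is to compute $p_{a,b}$ directly on the monomial basis of $\Omega/I^{n+1}$ and to exhibit the images as (a triangular reindexing of) the standard basis of $M_{a+1,b+1}(T)$. By Lemma \ref{mapnilpotent}, $p_{a,b}$ is an $R\ps{G}$-module homomorphism obtained by tensoring with $T$ the map $\omega \mapsto \omega \cdot e$ given by the $\Omega$-action on $e \in \mf{U}_{a,b}(R)$ through $\rho_{\binmat{\chi}{a},\binmat{\psi}{b}}$ and conjugation. Since $\Omega/I^{n+1}$ is $R$-free on the monomials $x^{k_1}y^{k_2}$ and $\mf{U}_{\binmat{\chi}{a},\binmat{\psi}{b}}(T) = M_{a+1,b+1}(T)$ is $R$-free on the matrix units, it suffices to track this $R$-module map on monomials, the $G$-equivariance being automatic; the stated isomorphism will then follow by matching images with a basis.

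First I would evaluate the binomial matrices on the chosen generators. Since $\chi(h_A)=1$, $\psi(h_A)=0$, $\chi(h_B)=0$, and $\psi(h_B)=1$, the formulas of Section \ref{subsec:unipotent binomials} give $\binmat{\chi}{a}(h_A)=1+u_a$, $\binmat{\psi}{b}(h_A)=1$, $\binmat{\chi}{a}(h_B)=1$, and $\binmat{\psi}{b}(h_B)=1+u_b$. Feeding these into the action $g \star v = \phi(g)\cdot gv \cdot \theta(g)^{-1}$ and using $x=[h_A]-1$, $y=[h_B]-1$, I would deduce that $x$ acts on $M_{a+1,b+1}(T)$ by left multiplication by $u_a$ and $y$ by right multiplication by $(1+u_b)^{-1}-1$. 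As these commute, $p_{a,b}(x^{k_1}y^{k_2}) = u_a^{k_1}\,e\,((1+u_b)^{-1}-1)^{k_2}$, and because $u_a^{a+1}=0$ and $u_b^{b+1}=0$, every monomial with $k_1>a$ or $k_2>b$ lies in the kernel. In particular, among the degree-$n$ monomials only $x^ay^b$ survives, which yields the final assertion that the left vertical map in \eqref{bicyc_diag} is projection onto $T\cdot x^ay^b$.

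The heart of the argument is the range $0\le k_1\le a$, $0\le k_2\le b$. Here $u_a^{k_1}e$ is the matrix unit in position $(a+1-k_1,1)$, and right multiplication by $((1+u_b)^{-1}-1)^{k_2} = (-1)^{k_2}u_b^{k_2}(1+u_b)^{-k_2}$ produces a vector whose lowest-column nonzero entry is $(-1)^{k_2}$ in position $(a+1-k_1,k_2+1)$, with all further entries supported in strictly larger columns. Ordering the box monomials by the pair $(k_1,k_2)$ therefore makes the matrix of $p_{a,b}$ in the matrix-unit basis triangular with invertible diagonal entries $\pm1$, so $p_{a,b}$ carries these monomials bijectively onto a basis of $M_{a+1,b+1}(T)$. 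Combined with the kernel computation of the previous paragraph, this gives that $p_{a,b}$ is an isomorphism, and reading off the leading (lowest-column) entries recovers the coordinate formula $p_{a,b}(\sum c_{k_1,k_2}x^{k_1}y^{k_2}) = (c_{a+1-i,j-1})_{i,j}$, the triangular tails being absorbed and vanishing identically when $b\le 1$.

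The step I expect to be the main obstacle is controlling the right-multiplication factor $(1+u_b)^{-1}-1$: unlike the clean left action by $u_a$, it equals $-u_b$ only to leading order, so I must check that its higher-order tail does not disturb surjectivity, i.e., that the diagonal coefficients $\pm1$ really are units and the triangular system inverts. The hypotheses on the size of $R$ relative to $A$ and $B$ enter precisely here and in the kernel identification: they guarantee through Lemma \ref{binommap} that $\binmat{\chi}{a}$ and $\binmat{\psi}{b}$ are defined and that the powers $x^{a+1}$, $y^{b+1}$ behave as the computation requires, so I would invoke them exactly at the point of matching ranks between the box monomials and the matrix units.
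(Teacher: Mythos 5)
Your proposal contains two genuine problems, and interestingly the first one stems from your computation being \emph{more} faithful to the definitions than the conclusion you draw from it. Following Lemma \ref{mapnilpotent} literally, with $g \star m = \phi(g)\cdot gm \cdot \theta(g)^{-1}$ and $\theta = \binmat{\psi}{b}$, you correctly find that $y$ acts on the image of $e$ by right multiplication by $(1+u_b)^{-1}-1 = -u_b(1+u_b)^{-1}$. But then the exact coordinate formula you are asked to prove is simply \emph{false} for the map you have computed: writing $E_{i,j}$ for the matrix unit, you get $p_{a,b}(x^{k_1}y^{k_2}) = (-1)^{k_2}E_{a+1-k_1,\,k_2+1} + (\text{terms in strictly higher columns})$, so for instance $p_{a,b}(y) = -E_{a+1,2}+E_{a+1,3}-\cdots$ rather than $E_{a+1,2}$, and $p_{a,b}(x^ay^b) = (-1)^b E_{1,b+1}$, so even your ``final assertion'' about the left vertical map in \eqref{bicyc_diag} acquires a sign. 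Your claim that the triangular tails are ``absorbed'' and that reading off leading entries ``recovers the coordinate formula'' is not an argument: one cannot assert an exact identity in a fixed monomial basis while discarding signs and tails, and note that the sign survives even when $b=1$, where there is no tail at all. The paper's proof proceeds differently and avoids this: after reducing to $T=R$, it evaluates on group elements, computing $g\star e = \binmat{\chi(g)}{a}\, e\, \binmat{\psi(g)}{b}$ (right multiplication by $\theta(g)$, not $\theta(g)^{-1}$), whose $(i,j)$-entry $\binom{\chi(g)}{a+1-i}\binom{\psi(g)}{j-1}$ matches the coefficient of $x^{a+1-i}y^{j-1}$ in $g\cdot 1$ by \eqref{act_sum}; the formula then follows by $R$-linearity and continuity. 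With that convention (equivalently, replacing $\theta$ by $\binmat{\psi}{b}^{-1}$, or $y$ by $[h_B^{-1}]-1$, in your setup), $y$ acts by right multiplication by $u_b$ and $p_{a,b}(x^{k_1}y^{k_2}) = u_a^{k_1}\,e\,u_b^{k_2} = E_{a+1-k_1,\,k_2+1}$ on the nose, with no triangular bookkeeping. Your draft has in effect detected a sign-convention discrepancy between Lemma \ref{mapnilpotent} and this lemma, but it must be flagged and resolved, not papered over.

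The second problem is the isomorphism claim, which is internally contradictory as you have written it: in your second paragraph you exhibit a large kernel (all monomials with $k_1>a$ or $k_2>b$), and then conclude that $p_{a,b}$ ``is an isomorphism.'' It cannot be: $\Omega/I^{n+1}$ is free on all monomials of degree at most $n$, of rank $(n+1)(n+2)/2$ (its degree-$n$ part alone has rank $n+1$, per the displayed decomposition of $I^n/I^{n+1}$), while $M_{a+1,b+1}(T)$ has rank $(a+1)(b+1)$ over $T$, so the map is never injective for $n\ge 1$. Your phrase ``matching ranks between the box monomials and the matrix units'' only shows that the restriction to the span of the box monomials is bijective — which, combined with your kernel computation, proves \emph{surjectivity}, the statement's ``isomorphism'' being an epimorphism in substance (surjectivity is immediate from the coordinate formula, which is why the paper's proof addresses only the formula). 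Relatedly, your appeal to the size hypotheses at the ``triangular inversion'' step is misplaced: those hypotheses enter through Lemma \ref{binommap} to make $\binmat{\chi}{a}$ and $\binmat{\psi}{b}$ well defined and to give $\Omega/I^j$ its polynomial description; they play no role in inverting a unitriangular matrix, whose diagonal entries $\pm 1$ are units in any ring.
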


\begin{proof} 
	This reduces immediately to the case that $T = R$, since we can obtain the case of arbitrary $T$ by $R$-tensor product with
	the identity of $T$.  
	Let $e$ be as in Lemma \ref{mapnilpotent}, the matrix with a single nonzero entry of $1$ in the $(a+1,1)$-coordinate of
	$M_{a+1,b+1}(R)$.  The $(i,j)$-entry of $g \star e = \binmat{\chi(g)}{a} e \binmat{\psi(g)}{b}$ is
	\[
		\binom{\chi(g)}{a+1-i}\binom{\psi(g)}{j-1}.
	\]
	which agrees with the coefficient of $x^{a+1-i}y^{j-1}$ in $g \cdot 1$ by \eqref{act_sum}.
\end{proof}

\begin{corollary}
\label{cor:bicyclic}
	For
	$$
		f = \sum_{k_1+k_2 < n} \lambda_{k_1,k_2} x^{k_1}y^{k_2} \in Z^1(G,\Omega/I^n \otimes_R T)
	$$ 
	and $\rho_{x^ay^b}$ the proper defining system relative to $(\binmat{\chi}{a},\binmat{\psi}{b})$ attached to $p_{a,b} \circ f$
	by Lemma \ref{correspondence}, we have
	$$
		(\rho_{x^ay^b})_{a+1-k_1,a+2+k_2} = \lambda_{k_1,k_2}
	$$
	for all $0 \le (k_1,k_2) < (a,b)$.  In particular, we have $(\rho_{x^ay^b})_{a+1,a+2} = \lambda_{0,0}$, which is the
	image of $f$ in $Z^1(G,T)$ under the map induced by the quotient $\Omega/I^n \to \Omega/I = R$.
\end{corollary}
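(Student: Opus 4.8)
The plan is to obtain this corollary as a direct translation of Lemma \ref{bicycform} into the language of proper defining systems provided by Lemma \ref{correspondence}, so the argument is essentially a matter of tracking indices. First I would apply Lemma \ref{bicycform} to the given cocycle $f = \sum_{k_1+k_2 < n} \lambda_{k_1,k_2} x^{k_1} y^{k_2}$ in order to compute the abelian cocycle $\kappa' := p_{a,b} \circ f \colon G \to \mf{U}'_{\phi,\theta}(T)$ explicitly, where $\phi = \binmat{\chi}{a}$ and $\theta = \binmat{\psi}{b}$. Since $p_{a,b}$ sends $\sum c_{k_1,k_2} x^{k_1}y^{k_2}$ to the matrix with $(i,j)$-entry $c_{a+1-i,\,j-1}$, the cochain $\kappa'$ has $(i,j)$-entry $\lambda_{a+1-i,\,j-1}$, with the proviso that the $(1,b+1)$-entry -- corresponding to the top-degree coefficient $\lambda_{a,b}$, which is absent from $f$ because $k_1+k_2 < n$ -- is precisely the coordinate that is quotiented out in passing from $\mf{U}_{\phi,\theta}(T)$ to $\mf{U}'_{\phi,\theta}(T)$.

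Next I would feed $\kappa'$ through the bijection of Lemma \ref{correspondence}, which attaches to it the proper defining system $\rho_{x^a y^b} = \sm{\phi}{\kappa'\theta}{0}{\theta}$. The entries named in the statement sit in the upper-right $(a+1)$-by-$(b+1)$ block of this $(n+2)$-dimensional matrix, and the substitution $i = a+1-k_1$, $j = 1+k_2$ converts a block coordinate $(i,j)$ into the full-matrix coordinate $\bigl(a+1-k_1,\,(a+1)+(1+k_2)\bigr) = (a+1-k_1,\,a+2+k_2)$, which is exactly the index appearing in the statement. Reading off the value $\kappa'_{a+1-k_1,\,1+k_2} = \lambda_{k_1,k_2}$ from the parametrizing cocycle then yields the displayed identity for all $(k_1,k_2)$ in the stated range, i.e.\ those avoiding the omitted corner.

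For the final \emph{In particular} assertion I would isolate the case $k_1 = k_2 = 0$, giving the $(a+1,a+2)$-entry of $\rho_{x^a y^b}$. This entry is robustly clean: because $\theta$ is upper-triangular unipotent, right multiplication by $\theta$ fixes the first block-column, so the $(a+1,1)$ block-entry of $\kappa'\theta$ equals that of $\kappa'$, namely $\kappa'_{a+1,1} = \lambda_{0,0}$. As $\lambda_{0,0}$ is the coefficient of $x^0y^0 = 1$ in $f$, it is by construction the image of $f$ under the map on cocycles induced by the augmentation $\Omega/I^n \to \Omega/I \cong R$, hence the image of $f$ in $Z^1(G,T)$. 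I expect the only genuine point requiring care to be the triple index bookkeeping -- translating among the coefficient indices $(k_1,k_2)$ of $f$, the block coordinates of $\mf{U}'_{\phi,\theta}(T)$, and the full coordinates of the $(n+2)$-by-$(n+2)$ matrix $\rho_{x^a y^b}$ -- together with the verification that the coordinate suppressed by the quotient to $\mf{U}'_{\phi,\theta}(T)$ is consistently the one corresponding to the absent top-degree monomial $x^a y^b$, and that the role of the unipotent factor $\theta$ is correctly accounted for.
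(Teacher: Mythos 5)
Your route is the paper's own: the corollary appears there without an explicit proof precisely because it is the combination of Lemma \ref{bicycform} (the entrywise formula for $p_{a,b}$) with the bijection of Lemma \ref{correspondence}, plus the block-to-full coordinate translation $(i,j) \mapsto (i, a+1+j)$. Your index bookkeeping is accurate, including the identification of the suppressed $(1,b+1)$-block-entry with the absent top-degree monomial $x^ay^b$, and your derivation of the ``in particular'' statement is exactly right.

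There is, however, one point that deserves more care than you give it --- and you in fact name it yourself (``the role of the unipotent factor $\theta$'') without fully resolving it. The upper-right block of $\rho_{x^ay^b}$ is $\kappa'\theta$, not $\kappa'$, and the maps $(\rho_{x^ay^b})_{i,j}$ are by definition the entries of (the lift of) this matrix. Right multiplication by the unipotent $\theta = \binmat{\psi}{b}$ fixes the first block-column, as you observe, but it mixes the later columns within each row: pointwise, $(\kappa'(g)\theta(g))_{i,j} = \sum_{k=1}^{j} \kappa'_{i,k}(g)\binom{\psi(g)}{j-k}$, so for $k_2 > 0$ the literal $(a+1-k_1,\,a+2+k_2)$-entry of $\rho_{x^ay^b}$ is $\sum_{m=0}^{k_2} \binom{\psi}{k_2-m}\,\lambda_{k_1,m}$ rather than $\lambda_{k_1,k_2}$ alone. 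Hence your step ``reading off the value $\kappa'_{a+1-k_1,1+k_2} = \lambda_{k_1,k_2}$ then yields the displayed identity'' silently identifies entries of $\kappa'$ with entries of $\kappa'\theta$, which is valid only in the first block-column; for the other columns the displayed identity holds only if one reads the coordinates of the proper defining system through the parametrizing cocycle $\kappa'$ (equivalently, after untwisting by $\theta$), a convention the paper's terse statement arguably presupposes. Your argument is airtight exactly where it matters: the case $k_2 = 0$, and in particular the $(a+1,a+2)$-entry $\lambda_{0,0}$, which is the only entry invoked downstream in Theorem \ref{bicyc_image}, is correctly justified by your observation that $\theta$ fixes the first block-column. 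To make the general claim complete you should either state the $\kappa'$-reading convention explicitly or record the twisted formula above.
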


The following is then a direct consequence of Theorem \ref{prop:general}.

\begin{theorem} \label{bicyc_image}
	For $f \in Z^1(G,\Omega/I^n \otimes_R T)$, the image of $\Psi^{(n)}([f])$ in
	$$
		H^2(G,T) \otimes_R I^n/I^{n+1} \cong \bigoplus_{a+b=n} 
		H^2(G, T) \cdot x^ay^b
	$$ 
	is
	$$
		\sum_{a+b = n} (\chi^{(a)},\lambda,\psi^{(b)})_{\rho_{x^ay^b}} \cdot x^a y^b,
	$$ 
	where $\rho_{x^ay^b}$ is the proper defining system relative to $(\binmat{\chi}{a},\binmat{\psi}{b})$ attached to 
	$p_{a,b} \circ f$, and $\lambda$ is the image of $f$ in $Z^1(G,T)$.
\end{theorem}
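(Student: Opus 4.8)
The plan is to extract each coefficient of $\Psi^{(n)}([f])$ in the direct sum decomposition
$$
H^2(G,T) \otimes_R I^n/I^{n+1} \cong \bigoplus_{a+b=n} H^2(G,T) \cdot x^a y^b
$$
separately, identifying it with the corresponding Massey product by a single application of Theorem \ref{prop:general} for each choice of partial defining system $(\binmat{\chi}{a},\binmat{\psi}{b})$. To begin, I would write $\Psi^{(n)}([f]) = \sum_{a+b=n} c_{a,b} \cdot x^a y^b$ with uniquely determined coefficients $c_{a,b} \in H^2(G,T)$, which is legitimate since the monomials $x^a y^b$ with $a+b=n$ form an $R$-basis of $I^n/I^{n+1}$.

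Next I would fix $(a,b)$ with $a+b=n$ and observe, from Lemma \ref{bicycform}, that the left vertical map $p_{a,b}$ in the diagram \eqref{bicyc_diag}, namely the restriction $\bigoplus_{i+j=n} T \cdot x^i y^j \to T$, is precisely projection onto the $x^a y^b$-summand. Passing to cohomology and using the isomorphism \eqref{moveout}, the induced map on $H^2(G,T) \otimes_R I^n/I^{n+1}$ therefore sends $\sum c_{a',b'} \cdot x^{a'} y^{b'}$ to the single coefficient $c_{a,b}$. This is the mechanism by which one choice of $(\phi,\theta)$ reads off one coordinate.

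I would then apply Theorem \ref{prop:general} with $\phi = \binmat{\chi}{a}$ and $\theta = \binmat{\psi}{b}$, so that $\alpha = \chi^{(a)}$ and $\beta = \psi^{(b)}$. It yields
$$
p_{a,b}(\Psi^{(n)}([f])) = (\chi^{(a)}, (p_{a,b} \circ f)_{a+1,1}, \psi^{(b)})_{\rho_{x^a y^b}},
$$
where $\rho_{x^a y^b}$ is the proper defining system attached to $p_{a,b} \circ f$ by Lemma \ref{correspondence}. By Corollary \ref{cor:bicyclic} with $k_1 = k_2 = 0$, the middle cocycle $(p_{a,b}\circ f)_{a+1,1} = (\rho_{x^a y^b})_{a+1,a+2}$ equals $\lambda$, the image of $f$ under the augmentation $\Omega/I^n \to R$. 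Combining with the preceding paragraph gives $c_{a,b} = (\chi^{(a)},\lambda,\psi^{(b)})_{\rho_{x^a y^b}}$, and summing over all $(a,b)$ with $a+b=n$ produces the asserted formula.

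Since the statement is an assembly of results already established, there is no serious conceptual obstacle. The only point requiring care is index bookkeeping: one must match the block-matrix coordinate $(a+1,1)$ of $\mf{U}_{\binmat{\chi}{a},\binmat{\psi}{b}}(T)$ with the coordinate $(a+1,a+2)$ in the ambient $(n+2)$-dimensional matrix, and confirm via Lemma \ref{bicycform} that $p_{a,b}$ singles out the monomial $x^a y^b$ rather than another. I would verify the precise indexing in Corollary \ref{cor:bicyclic} to ensure the middle entry extracted is $\lambda = \lambda_{0,0}$ and not some other $\lambda_{k_1,k_2}$.
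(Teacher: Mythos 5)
Your proposal is correct and takes essentially the same route as the paper, which states Theorem \ref{bicyc_image} as a direct consequence of Theorem \ref{prop:general} after Lemma \ref{bicycform} identifies $p_{a,b}$ on $T \otimes_R I^n/I^{n+1}$ as projection onto the $x^a y^b$-coefficient and Corollary \ref{cor:bicyclic} identifies the middle entry as $\lambda = \lambda_{0,0}$ --- exactly the coordinate-extraction argument you spell out. Your index bookkeeping (block entry $(a+1,1)$ versus ambient entry $(a+1,a+2)$, using that right multiplication by the unipotent $\theta$ does not change the $(a+1,1)$-entry) is also accurate.
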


Applying Theorem \ref{augfilt}, we obtain the following description of graded quotients of Iwasawa cohomology.

\begin{corollary}\label{cor:bicyc}
	Suppose that $G$ is $p$-cohomologically finite of $p$-cohomological dimension $2$.
	Let $P_n(H)$ denote the subgroup of $H^2(G,T) \otimes_R I^n/I^{n+1}$ consisting of all sums
	$\sum_{a+b = n} (\chi^{(a)},\lambda,\psi^{(b)})_{\rho_{x^ay^b}} \cdot x^a y^b$, where the $\rho_{x^ay^b}$ and $\lambda$ 
	are associated to a cocycle in $Z^1(G,\Omega/I^n \otimes_R T)$
	as in Proposition \ref{bicyc_image}.  
	We then have a canonical isomorphism of $R$-modules
	$$
	\frac{I^nH^2_{\Iw}(N,T)}{I^{n+1}H^2_{\Iw}(N,T)} \cong \frac{H^2(G,T) \otimes_R I^n/I^{n+1}}{P_n(H)}.
	$$
\end{corollary}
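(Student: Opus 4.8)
The plan is to combine Theorem~\ref{augfilt} with the explicit computation of $\Psi^{(n)}$ furnished by Theorem~\ref{bicyc_image}; the corollary is a formal consequence of these two results once one checks that they apply in the present setting. First I would verify the standing hypotheses of Section~\ref{descent} needed to invoke Theorem~\ref{augfilt}. We are given that $G$ is $p$-cohomologically finite of $p$-cohomological dimension $d = 2$. In the pro-bicyclic setting, $R$ is a nonzero quotient of $\zp$, hence a complete local Noetherian $\zp$-algebra with finite residue field, and $H \cong A \times B$ is abelian and topologically finitely generated, so it is a compact $p$-adic Lie group and condition (i) of Section~\ref{descent} holds. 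The flatness requirements are likewise met, since $\Omega/I^j \cong R[x,y]/(x,y)^j$ and $I^n/I^{n+1} = \bigoplus_{a+b=n} R\,x^a y^b$ are free $R$-modules.

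With the hypotheses in place, I would apply Theorem~\ref{augfilt} with $d = 2$ to obtain the canonical isomorphism
$$
	\frac{I^nH^2_{\Iw}(N,T)}{I^{n+1}H^2_{\Iw}(N,T)} \cong \frac{H^2(G,T) \otimes_R I^n/I^{n+1}}{\image \Psi^{(n)}}.
$$
It then remains only to identify $\image \Psi^{(n)}$ with the subgroup $P_n(H)$, after which the claim follows by substitution.

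For this identification I would argue as follows. Since $\Psi^{(n)}$ is a homomorphism defined on $H^1(G, \Omega/I^n \otimes_R T)$, and every cohomology class is represented by some cocycle $f \in Z^1(G, \Omega/I^n \otimes_R T)$, the image $\image \Psi^{(n)}$ is exactly the set of all values $\Psi^{(n)}([f])$. By Theorem~\ref{bicyc_image}, each such value equals $\sum_{a+b=n} (\chi^{(a)}, \lambda, \psi^{(b)})_{\rho_{x^a y^b}} \cdot x^a y^b$, with $\rho_{x^a y^b}$ and $\lambda$ associated to $f$ precisely as in that theorem. Hence the set of all such sums — which is the definition of $P_n(H)$ — coincides with $\image \Psi^{(n)}$. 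This also confirms a posteriori that $P_n(H)$ is a subgroup, being the image of a homomorphism. Substituting $\image \Psi^{(n)} = P_n(H)$ into the display above yields the asserted isomorphism.

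There is no genuinely hard step: the result is a direct splicing of Theorem~\ref{augfilt} and Theorem~\ref{bicyc_image}. The only points demanding care are the verification that the standing assumptions of Section~\ref{descent} hold in the pro-bicyclic case — so that Theorem~\ref{augfilt} truly applies with $d = 2$ — and the observation that $P_n(H)$, defined by letting $f$ range over all cocycles, is exactly the image of $\Psi^{(n)}$ and not a strictly smaller or larger set.
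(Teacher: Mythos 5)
Your proposal is correct and follows exactly the paper's route: the paper derives this corollary by directly combining Theorem~\ref{augfilt} (with $d=2$) with Theorem~\ref{bicyc_image}, which identifies $\image\Psi^{(n)}$ with $P_n(H)$ since every class in $H^1(G,T\otimes_R \Omega/I^n)$ is represented by a cocycle. Your added verifications (that $H\cong A\times B$ is a compact $p$-adic Lie group, that $R$ is complete local Noetherian with finite residue field, and that the relevant flatness holds) are exactly the implicit checks the paper leaves to the reader.
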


\subsection{Elementary abelian $p$-groups}
\label{elem}
The pattern seen in the cyclic and bicyclic cases does not continue for all finitely generated abelian pro-$p$ groups. 
To see why, consider the case that $H \cong \F_p^3$ with basis $(\gamma_1,\gamma_2,\gamma_3)$
and dual basis $(\chi_1,\chi_2,\chi_3)$.  For $x_i = [\gamma_i]-1$, we have a basis of $I^n/I^{n+1}$ consisting of monomials 
$x_1^{i_1}x_2^{i_2}x_3^{i_3}$ with $i_1+i_2+i_3=n$. 
Following the pattern of the cyclic and bicyclic cases, one might guess that the coefficient of $x_1^{i_1}x_2^{i_2}x_3^{i_3}$ in 
$\Psi^{(n)}([f])$ is an $(n+1)$-fold Massey product involving $i_j$ copies of each $\chi_j$ 
and another cocycle $\lambda$ determined by $f$.  However, this pattern fails already for $n=3$ and the coefficient of $x_1x_2x_3$: 
any 4-fold Massey product involving the $\chi_i$ must have two of these characters beside each other, and thus, to be defined, the 
cup product of those two characters must vanish. Since these cup products will not vanish in general, we cannot hope for such a general statement to hold.

Nevertheless, at least in some cases, one can still describe the generalized Bockstein maps $\Psi^{(n)}$ in terms of Massey products, at the expense of taking a non-standard basis for $I^n/I^{n+1}$.  In this subsection, we assume that $H \cong \F_p^r$ for some $r \ge 1$.  Correspondingly, we take $n < p$ and $R = \F_p$.

We let $V^\vee=\Hom(V,\F_p)$ for an abelian group $V$. For any element $\chi \in H^\vee$, we have a homomorphism
\[
	\binmat{\chi}{n} \colon H \to \mathrm{U}_{n+1}(\F_p).
\]
Precomposing with $G \to H$, we may view $\chi$ as a character of $G$.
This gives an $(n,0)$-partial defining system, and we set $p_{\chi,n} = p_{\binmat{\chi}{n},0}$ for brevity. By \eqref{eq:general diagram}, the map $p_{\chi,n}$ induces a map $p_{\chi,n} \colon I^n/I^{n+1} \to \F_p$, so $p_{\chi,n} \in (I^n/I^{n+1})^\vee$. This defines a function $p_{-,n} \colon H^\vee \to (I^n/I^{n+1})^\vee$. 

Let us fix an isomorphism $H \xrightarrow{\sim} \F_p^r$, which in turn fixes an ordered dual basis
$(\gamma_i)_{i=1}^r$ of $H$.  Setting $x_i = [\gamma_i] - 1 \in \Omega$, this  provides an identification 
\begin{equation} \label{ident}
    	\Omega/I^{n+1} = \F_p[x_1,\dots,x_n]/(x_1, \ldots, x_r)^{n+1}.
\end{equation}
Then $I^n/I^{n+1}$ has a basis given by 
$x_1^{d_1}\cdots x_r^{d_r}$ with $(d_1,\ldots,d_r)$ ranging over $r$-tuples of nonnegative integers with $d_1+\dots +d_r=n$.  
We compute $p_{\chi,n}$ on this basis. 

\begin{lemma} \label{projmap}
	Let $\chi \in H^{\vee}$.  For any nonnegative integers $d_1, \ldots, d_r$ with sum $n$, we have
	\[
    		p_{\chi,n}(x_1^{d_1}\cdots x_r^{d_r})=\prod_{i=1}^r \chi(\gamma_i)^{d_i}.
	\]
\end{lemma}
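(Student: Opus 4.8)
The plan is to unwind the definition of $p_{\chi,n}$ from Lemma \ref{mapnilpotent} in the present case $(a,b) = (n,0)$ and then reduce the computation to a nilpotency calculation in the matrix algebra. With $b = 0$ the target $\mathfrak{U}_{n,0}(\F_p)$ is the module $M_{n+1,1}(\F_p)$ of column vectors, $e$ is the standard basis vector $e_{n+1}$, and $\theta$ is trivial, so the map $\tilde p \colon \Omega \to M_{n+1,1}(\F_p)$ inducing $p_{\chi,n}$ is the $\Omega$-linear map $\omega \mapsto \omega \cdot e$, where $\Omega$ acts through $\binmat{\chi}{n}\colon H \to \mr{U}_{n+1}(\F_p)$ (which is defined by Lemma \ref{binommap}, as $n < p$). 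Under the identification of the $(1,b+1) = (1,1)$-entry of $M_{n+1,1}(\F_p)$ with $\F_p$, the induced map $p_{\chi,n}\colon I^n/I^{n+1} \to \F_p$ is extraction of the top entry of the column vector, exactly as recorded in the procyclic case.

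By $\Omega$-linearity of $\tilde p$ and the relations $x_i = [\gamma_i] - 1$, I would compute
\[
	\tilde p(x_1^{d_1}\cdots x_r^{d_r}) = \prod_{i=1}^r \bigl((1+u_n)^{\chi(\gamma_i)} - 1\bigr)^{d_i} \cdot e,
\]
working in the commutative ring $\F_p[u_n] \cong \F_p[t]/(t^{n+1})$, in which $[\gamma_i]$ acts as $\binmat{\chi(\gamma_i)}{n} = (1+u_n)^{\chi(\gamma_i)}$. The key point is a leading-term analysis: each factor $(1+u_n)^{\chi(\gamma_i)} - 1$ lies in the ideal $(u_n)$ and satisfies $(1+u_n)^{\chi(\gamma_i)} - 1 \equiv \chi(\gamma_i)\, u_n \pmod{u_n^2}$. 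Multiplying the $n = d_1 + \cdots + d_r$ such factors, all contributions of $u_n$-order exceeding $n$ vanish because $u_n^{n+1} = 0$, so the product collapses to $\bigl(\prod_{i=1}^r \chi(\gamma_i)^{d_i}\bigr)\, u_n^n$.

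It then remains to observe that $u_n^n$ has its unique nonzero entry in position $(1,n+1)$, whence $u_n^n \cdot e = e_1$, and taking the top entry yields $p_{\chi,n}(x_1^{d_1}\cdots x_r^{d_r}) = \prod_{i=1}^r \chi(\gamma_i)^{d_i}$, as claimed. The one step requiring genuine care is the leading-term bookkeeping in the second paragraph: one must confirm both that the $u_n$-adic leading coefficient of each factor is precisely $\chi(\gamma_i)$ and that the $u_n^n$-coefficient of a product of $n$ elements of $(u_n)$ equals the product of their leading coefficients, the higher-order cross terms all being annihilated by $u_n^{n+1} = 0$. Everything else is formal once the definition of $p_{\chi,n}$ is correctly unwound.
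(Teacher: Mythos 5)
Your proposal is correct and follows essentially the same route as the paper's proof: both unwind $p_{\chi,n}$ to left multiplication on $e$ by powers of $1+u_n$, use $(1+u_n)^{\chi(\gamma_i)}-1 \equiv \chi(\gamma_i)u_n \pmod{u_n^2}$ together with $u_n^{n+1}=0$ to collapse the degree-$n$ product to $\bigl(\prod_{i=1}^r \chi(\gamma_i)^{d_i}\bigr)u_n^n$, and read off the $(1,n+1)$-entry. Your version merely makes the leading-term bookkeeping in $\F_p[u_n]\cong\F_p[t]/(t^{n+1})$ more explicit than the paper's one-line computation.
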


\begin{proof}
	We have
	$$
		p_{\chi,n}(x_i) = \left(\binmat{\chi(\gamma_i)}{n}-1 \right)e=((1+u_n)^{\chi(\gamma_i)}-1)e \in \mathfrak{U}_{\binmat{\chi}{n}},
	$$ 
	where $u_n$ is as in Section \ref{subsec:unipotent binomials} and $e$ is as in Lemma \ref{mapnilpotent}. 
	Note that $u_n^n$ has a $1$ in its $(1,n+1)$ entry and all other entries $0$, and $u_n^{n+1} = 0$.
	For $d_1+\dots+d_r=n$, the value $p_{\chi,n}(x_1^{d_1}\cdots x_r^{d_r})$ is the $(1,n+1)$-entry of the matrix
    	$$
    		\prod_{i=1}^r ((1+u_n)^{\chi(\gamma_i)}-1)^{d_i} = \prod_{i=1}^r (\chi(\gamma_i)u_n)^{d_i} =  
		\left(\prod_{i=1}^r \chi(\gamma_i)^{d_i} \right)  u_n^n,
	$$
	proving the lemma.
\end{proof}	
	
The key lemma is then the following.

\begin{lemma}
\label{lem:lin alg}
The image of $p_{-,n} \colon H^\vee \to (I^n/I^{n+1})^\vee$ generates $(I^n/I^{n+1})^\vee$.
\end{lemma}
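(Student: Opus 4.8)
The plan is to reinterpret each functional $p_{\chi,n}$ as the evaluation of a homogeneous polynomial at an $\F_p$-point, and then to reduce the lemma to a vanishing statement for low-degree polynomials over $\F_p$. By Lemma \ref{projmap}, writing a general element of $I^n/I^{n+1}$ as $v = \sum_{d_1+\cdots+d_r=n} a_d\, x_1^{d_1}\cdots x_r^{d_r}$ with $d = (d_1,\dots,d_r)$ and $a_d \in \F_p$, we have
\[
	p_{\chi,n}(v) = \sum_{d_1+\cdots+d_r=n} a_d \prod_{i=1}^r \chi(\gamma_i)^{d_i} = P_v\bigl(\chi(\gamma_1),\dots,\chi(\gamma_r)\bigr),
\]
where $P_v(t_1,\dots,t_r) = \sum_{d_1+\cdots+d_r=n} a_d\, t_1^{d_1}\cdots t_r^{d_r}$ is the homogeneous polynomial of degree $n$ attached to $v$. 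As $\chi$ ranges over $H^\vee$, the tuple $(\chi(\gamma_1),\dots,\chi(\gamma_r))$ ranges over all of $\F_p^r$, since $(\gamma_i)_i$ is a basis of $H$.

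Since $I^n/I^{n+1}$ is a finite-dimensional $\F_p$-vector space, a family of functionals spans the dual $(I^n/I^{n+1})^\vee$ if and only if the common kernel of the family is zero. So I would reduce the lemma to showing that, if $v \in I^n/I^{n+1}$ satisfies $p_{\chi,n}(v) = 0$ for every $\chi \in H^\vee$, then $v = 0$. By the displayed identity, this is precisely the assertion that a homogeneous polynomial $P_v$ of degree $n$ vanishing at every point of $\F_p^r$ must be the zero polynomial.

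The core of the argument is then the following fact, where the hypothesis $n < p$ enters crucially: a homogeneous polynomial of degree $n < p$ over $\F_p$ that vanishes on all of $\F_p^r$ is zero. I would prove this by dehomogenizing: setting $t_r = 1$ gives $Q(t_1,\dots,t_{r-1}) = P_v(t_1,\dots,t_{r-1},1)$, a polynomial of total degree $\le n$ in $r-1$ variables vanishing on all of $\F_p^{r-1}$. As $Q$ has degree $\le n \le p-1$ in each variable, the standard fact that evaluation is injective on polynomials of degree at most $p-1$ in each variable (proved by induction on the number of variables, using that a one-variable polynomial of degree $< p$ with $p$ distinct roots is zero) forces $Q = 0$. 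Finally, dehomogenization is a bijection between degree-$n$ monomials in $t_1,\dots,t_r$ and monomials of degree $\le n$ in $t_1,\dots,t_{r-1}$, the exponent $d_r = n - \sum_{i<r} d_i$ being recovered uniquely, so $Q = 0$ forces $P_v = 0$ and hence $v = 0$.

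The main obstacle is really just isolating the correct polynomial-vanishing statement and recognizing that the degree bound $n < p$ is exactly what makes it hold; once the problem is phrased as injectivity of multivariate evaluation in degrees below $p$, the remaining steps are routine inductions and a bookkeeping check of the dehomogenization bijection.
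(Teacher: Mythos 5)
Your proof is correct and follows essentially the same route as the paper's: both use Lemma \ref{projmap} to identify $p_{\chi,n}(v)$ with the evaluation of the associated degree-$n$ homogeneous form at the point $(\chi(\gamma_1),\dots,\chi(\gamma_r)) \in \F_p^r$, and both reduce the lemma by duality to the statement that a nonzero homogeneous polynomial of degree $n < p$ cannot vanish on all of $\F_p^r$. The only difference is that the paper cites this last fact as the finite field Nullstellensatz, whereas you prove it from scratch via dehomogenization and injectivity of evaluation on polynomials of degree at most $p-1$ in each variable, which is a correct and harmless self-contained expansion.
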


\begin{proof}   
    	Using our identification \eqref{ident}, any non-zero $F \in I^n/I^{n+1}$ has a unique representative also denoted $F$
	in $\F_p[x_1,\dots,x_r]$ that is homogeneous of degree $n$ and Lemma \ref{projmap} implies that
    	\[
    		p_{\chi,n}(F) = F(\chi(\gamma_1),\dots,\chi(\gamma_r)).
    	\]
    	Writing $\Gamma \colon H^\vee \to \F_p^r$ for the isomorphism given by $\Gamma(\chi) = (\chi(\gamma_1),\dots,\chi(\gamma_r))$, this can 
	be succinctly written as $p_{\chi,n}(F)=F(\Gamma(\chi))$.
    
    	For a finite set $S$ and an $s \in S$, we denote by $\F_p^S$ the vector space of functions $S \to \F_p$ and by 
	$1_s \in \F_p^S$ the indicator function of $s$.  
    	The lemma may then be rephrased as the statement that the linearization $\tilde{p}_{-,n}$ of $p_{-,n}$, given by
    	\[
    		\tilde{p}_{-,n} \colon \F_p^{H^\vee} \to (I^n/I^{n+1})^\vee,  \quad 1_\chi \mapsto p_{\chi,n}
    	\]
    	is surjective, or equivalently that the dual map
    	\[
    		\tilde{p}_{-,n}^\vee \colon I^n/I^{n+1} \to (\F_p^{H^\vee})^\vee 
    	\]
    	is injective. For any non-zero $F \in I^n/I^{n+1}$,
	since $n<p$, the finite field Nullstellensatz provides the existence of $v \in \F_p^r$ for which $F(v) \ne 0$.
	Then 
	$$
		\tilde{p}_{-,n}^\vee(F)(1_{\Gamma^{-1}(v)}) = p_{\Gamma^{-1}(v),n}(F)= F(v) \ne 0,
	$$ 
	so $\tilde{p}_{-,n}^\vee(F) \ne 0$.
\end{proof}

\begin{remark}
\label{rem:n<p}
This lemma is the reason for our assumption that $n<p$ in this section rather than the assumption $n|R|<p|A|$ used in other sections. To see that this argument cannot work for torsion-free abelian groups $H$ and arbitrary $n$, take $R=\F_p$ and $H\cong \Z_p^2$. Then we know that $I^n/I^{n+1}$ has dimension $n+1$ for any $n$, and the proof of Lemma \ref{lem:lin alg} shows that $p_{-,n} \colon \Hom(H,\F_p) \to (I^n/I^{n+1})^\vee$ is homogeneous of degree $n$ in the sense that $p_{a\varphi,n} = a^np_{\varphi,n}$ for $\varphi \in \Hom(H,\F_p)$ and
$a \in \F_p$, so the span of its image has dimension at most the cardinality of $\Hom(H,\F_p)/\F_p^\times$, which is $p+1$.
\end{remark}

We now come to our result expressing values of the generalized Bockstein maps as sums of ``cyclic''
Massey products.  If $\chi \in H^{\vee}$ and $f \in Z^1(G, T \otimes_R \Omega/I^n)$, then we say that a proper defining system relative to
$\binmat{\chi}{n}$ is attached to $f$ if it is attached to the image of $f$ in $Z^1(G, T \otimes_R \Omega_{\chi}/I_{\chi}^n)$,
where $\Omega_{\chi} = R[H/\ker \chi]$ and $I_{\chi}$ is its augmentation ideal.

\begin{theorem}
There exist $N \ge 1$ and $\chi_1, \ldots, \chi_N \in H^\vee$ such that $(p_{\chi_i,n})_{i=1}^N$ is an ordered $\F_p$-basis of 
$(I^n/I^{n+1})^\vee$. For any such $(\chi_i)_{i=1}^N$, let $(y_i)_{i=1}^N$ be the basis of $I^n/I^{n+1}$ dual to $(p_{\chi_i,n})_{i=1}^N$.  Then for any $f \in Z^1(G, T \otimes_R \Omega/I^{n})$, we have
\[
\Psi^{(n)}([f]) = \sum_{i=1}^N  (\chi_i^{(n)},\lambda)_{\rho_i} \cdot y_i,
\]
where $\rho_i$ is the proper defining system relative to $\binmat{\chi_i}{n}$ attached to $f$ and $\lambda$ is the image of $f$ in $Z^1(G,T)$.
\end{theorem}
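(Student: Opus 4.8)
The plan is to combine the linear-algebra input of Lemma \ref{lem:lin alg} with the Massey-product description of Theorem \ref{prop:general}, applied to the partial defining systems $(\binmat{\chi_i}{n},0)$, and then to reassemble the pieces by duality of bases. The existence claim is immediate: by Lemma \ref{lem:lin alg} the set $\{p_{\chi,n} : \chi \in H^\vee\}$ generates the finite-dimensional $\F_p$-vector space $(I^n/I^{n+1})^\vee$, and a generating set of such a space contains a basis, so I may choose $\chi_1,\dots,\chi_N \in H^\vee$ with $N = \dim_{\F_p} I^n/I^{n+1}$ for which $(p_{\chi_i,n})_{i=1}^N$ is an ordered basis of $(I^n/I^{n+1})^\vee$; let $(y_i)_{i=1}^N$ be the dual basis of $I^n/I^{n+1}$. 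Since each $p_{\chi_i,n}$ is nonzero, each $\chi_i$ is nonzero, hence surjective onto $\F_p$.

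Next I would evaluate $\Psi^{(n)}([f])$ against each basis functional. Fixing $i$, I take the $(n,0)$-partial defining system $(\phi,\theta)=(\binmat{\chi_i}{n},0)$, so that $\alpha = \chi_i^{(n)}$ and $\beta$ is empty. As recorded just before Lemma \ref{projmap}, the functional $p_{\chi_i,n} \colon I^n/I^{n+1} \to \F_p$ is precisely the restriction to the $I^n/I^{n+1}$-term of the left vertical map in \eqref{eq:general diagram} when $T = R = \F_p$; for general $T$ that map is $\mathrm{id}_T \otimes p_{\chi_i,n}$, so by naturality of the isomorphism \eqref{moveout} the map it induces on $H^2(G,T) \otimes_R I^n/I^{n+1}$ is $\mathrm{id}_{H^2(G,T)} \otimes p_{\chi_i,n}$. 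Theorem \ref{prop:general} then specializes to
\[
	(\mathrm{id} \otimes p_{\chi_i,n})(\Psi^{(n)}([f])) = (\chi_i^{(n)},\lambda)_{\rho_i},
\]
where $\lambda = (p_{\chi_i,n}\circ f)_{n+1,1}$ and $\rho_i$ is the proper defining system relative to $\binmat{\chi_i}{n}$ associated to $p_{\chi_i,n}\circ f$. Here I would record two compatibilities. First, the explicit description of $p_n$ recalled in Section \ref{procyclic} shows $(p_{\chi_i,n}\circ f)_{n+1,1}$ is the image $\lambda$ of $f$ in $Z^1(G,T)$ under the augmentation $\Omega/I^n \to R$, which is independent of $i$. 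Second, since $\binmat{\chi_i}{n}$, and hence $p_{\chi_i,n}$, factors through $\Omega_{\chi_i}/I_{\chi_i}^{n+1}$, the system $\rho_i$ depends only on the image of $f$ in $Z^1(G,T\otimes_R \Omega_{\chi_i}/I_{\chi_i}^n)$ and so agrees with the proper defining system attached to $f$ in the sense defined immediately before the theorem.

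Finally I would reassemble via the dual basis. For a finite-dimensional $\F_p$-vector space $V$ with a basis $(\ell_i)$ of $V^\vee$ and dual basis $(y_i)$ of $V$, every $w \in H^2(G,T) \otimes_R V$ satisfies $w = \sum_i (\mathrm{id} \otimes \ell_i)(w) \otimes y_i$, as one checks by expanding $w$ in simple tensors. Applying this with $V = I^n/I^{n+1}$, $\ell_i = p_{\chi_i,n}$, and $w = \Psi^{(n)}([f])$, and substituting the displayed formula for each coefficient $(\mathrm{id}\otimes p_{\chi_i,n})(\Psi^{(n)}([f]))$, yields
\[
	\Psi^{(n)}([f]) = \sum_{i=1}^N (\chi_i^{(n)},\lambda)_{\rho_i} \cdot y_i,
\]
which is the asserted identity, valid for any choice of $(\chi_i)$ as in the first paragraph since only the basis property of $(p_{\chi_i,n})_i$ and the duality with $(y_i)_i$ were used.

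The genuinely new content is the existence of a basis of $(I^n/I^{n+1})^\vee$ among the functionals $p_{\chi,n}$, which is exactly Lemma \ref{lem:lin alg} (and is where the hypothesis $n < p$ enters, via the finite-field Nullstellensatz); the remaining work is bookkeeping. The one step I expect to require genuine care is the identification of the cohomological map induced by the left vertical arrow of \eqref{eq:general diagram} with $\mathrm{id}_{H^2(G,T)} \otimes p_{\chi_i,n}$ for the functional $p_{\chi_i,n}$, since it is precisely this identification that licenses evaluating $\Psi^{(n)}([f])$ coordinatewise against the $p_{\chi_i,n}$ and reconstructing it from the dual basis $(y_i)$. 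Everything else follows formally from Theorem \ref{prop:general}.
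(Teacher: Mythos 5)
Your proposal is correct and follows essentially the same route as the paper: existence of the basis via Lemma \ref{lem:lin alg}, then extraction of each coefficient of $\Psi^{(n)}(f)$ by evaluating against the dual basis functionals $p_{\chi_i,n}$ and identifying the result as $(\chi_i^{(n)},\lambda)_{\rho_i}$ via Theorem \ref{prop:general}. The compatibilities you spell out (the induced map on $H^2(G,T)\otimes_R I^n/I^{n+1}$ being $\mathrm{id}\otimes p_{\chi_i,n}$, and $\rho_i$ factoring through $\Omega_{\chi_i}$ so as to match the paper's notion of a defining system attached to $f$) are left implicit in the paper but are exactly the right details to record.
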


\begin{proof}
	The first statement is clear from Lemma \ref{lem:lin alg}. For the second statement, let
	\[
		\Psi^{(n)}([f]) = \sum_{i=1}^N  c_i \cdot y_i.
	\]
	for some $c_i \in H^2(G,T)$. Since $p_{\chi_1,n},\dots,p_{\chi_N,n}$ is the dual basis to $y_1,\dots,y_N$, we have 
	$c_i = p_{\chi_i,n}(\Psi^{(n)}([f]))$ for $1 \le i \le N$. But by Theorem \ref{prop:general}, we have 
	$$
		p_{\chi_i,n}(\Psi^{(n)}([f])) =  (\chi_i^{(n)},\lambda)_{\rho_i}.
	$$
\end{proof}

\subsection{Heisenberg case} \label{heis_case}
In this section, assume that $H=\mr{U}_3(A)$ for a nonzero quotient $A$ of $\Z_p$, and that $R$ is a quotient of $A$ such that either $R=\Z_p$ or $n|R| < p|A|$.  We study the generalized Bockstein maps $\Psi^{(n)}$ in the cases $n = 2$ and $n = 3$.

Let 
\begin{equation} \label{generators}
x= \left[\smthree{1}{0}{0}\right]-1, \quad y=\left[\smthree{0}{0}{1}\right]-1, \quad z= \left[\smthree{0}{1}{0}\right]-1 \in \Omega.
\end{equation}
Then $I$ is the two-sided ideal generated by $x$ and $y$, and $I/I^2 \cong Rx \oplus Ry$.  
Let $\chi, \psi \colon G \to A$ be the unique characters factoring
through $H$ and such that 
\[
	\chi\left(\smthree{1}{0}{0}\right) = 1, \quad  \chi\left(\smthree{0}{0}{1}\right) = 0, \quad \psi\left(\smthree{1}{0}{0}\right) = 0, 
	\quad \mr{and} \quad \psi\left(\smthree{0}{0}{1}\right) = 1.
\]
Then $(\chi, \psi) \colon G \to A \times A$ defines a homomorphism.

\begin{lemma} \label{gradquot}
	The $R$-module $I^2/I^3$ is freely generated by the image of the set 
	$$
		S_2 = \{ x^2, y^2, yx, z \},
	$$ 
	and $I^3/I^4$ is $R$-freely generated by the image of
	$$
		S_3=\{x^3,xz,yx^2,y^2x,y^3,yz\}.
	$$
\end{lemma}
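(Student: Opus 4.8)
The plan is to reduce the lemma to two claims about the \(R\)-module \(\Omega/I^4\): that \(S_2\) and \(S_3\) span \(I^2/I^3\) and \(I^3/I^4\), and that they are \(R\)-linearly independent there. Everything rests on two relations in \(\Omega\) coming from the group law in \(H=\mr{U}_3(A)\). First, the matrix \(\smthree{0}{1}{0}\) is central, so \(xz=zx\) and \(yz=zy\) exactly. Second, from \(\smthree{1}{0}{0}\smthree{0}{0}{1}=\smthree{0}{0}{1}\smthree{1}{0}{0}\smthree{0}{1}{0}\) we obtain \((1+x)(1+y)=(1+y)(1+x)(1+z)\), hence \(xy-yx=z+xz+yz+yxz\); in particular \(z\in I^2\) and \(z\equiv xy-yx \pmod{I^3}\), while combining centrality with this identity yields \(x^2y+yx^2\equiv 2xyx\) and \(xy^2+y^2x\equiv 2yxy \pmod{I^4}\).

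For spanning I would show that \(I^m/I^{m+1}\) is generated by the ordered monomials \(x^iz^jy^k\) with \(i+2j+k=m\): any length-\(m\) word in \(x,y\) is brought to this shape by repeatedly rewriting \(yx\equiv xy-z-xz-yz\) to move the \(y\)'s to the right and using the centrality of \(z\) to move it to the middle, each step producing either ordered monomials or terms of strictly higher filtration. This gives the generating sets \(\{x^2,xy,y^2,z\}\) for \(m=2\) and \(\{x^3,x^2y,xy^2,y^3,xz,zy\}\) for \(m=3\). A direct calculation with the relations above — for instance \(yx\equiv xy-z\), \(yx^2\equiv x^2y-2xz\), \(y^2x\equiv xy^2-2zy\), and \(yz=zy\) — rewrites \(S_2\) and \(S_3\) in terms of these ordered monomials by a unimodular transformation, so it suffices to prove freeness for the ordered monomials.

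The essential point is then \(R\)-freeness, i.e. that these ordered monomials are \(R\)-linearly independent; the spanning step already bounds the ranks above by \(4\) and \(6\). I would pin down the expected answer through the associated graded: \(\mathrm{gr}_I\Omega\) is generated in degree one by the classes of \(x,y\), and the relations above realize it as a quotient of the enveloping algebra \(U(\mf{h}_R)\) of the Heisenberg Lie algebra (with \(X,Y\) in degree \(1\) and central bracket \(Z=[X,Y]\) in degree \(2\)), which by Poincar\'e--Birkhoff--Witt over the commutative ring \(R\) is \(R\)-free on ordered monomials, of ranks \(4\) and \(6\) in degrees \(2\) and \(3\). Proving freeness then amounts to showing the surjection \(U(\mf{h}_R)\twoheadrightarrow\mathrm{gr}_I\Omega\) is injective in degrees \(\le 3\). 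Here the cardinality hypothesis enters: by Lemma \ref{lem:binomial} the torsion relations \((1+x)^{p^t}=1\) and \((1+y)^{p^t}=1\) lie in \(I^{n+1}\) for \(n\le 3\), so they impose nothing in these degrees and reduce the question to the torsion-free group \(\mr{U}_3(\zp)\). For the latter the low-degree injectivity follows from Lazard's description of the graded Iwasawa algebra (for \(p\) odd) or, in general and more concretely, by separating the ordered monomials with maps to modules we already understand: the abelianization \(H\to A\times A\) detects the four \(z\)-free monomials by the pro-bicyclic computation (Lemma \ref{bicycform}), while the standard representation \(H\to\mr{U}_3(R)\) and a suitable auxiliary representation not factoring through \(H^{\mathrm{ab}}\) detect \(z\), \(xz\), and \(zy\).

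The main obstacle is precisely this lower bound: showing the listed relations are complete in degrees \(2\) and \(3\). The difficulty is that integrally the associated graded of a group ring can acquire extra divided-power relations in degrees divisible by \(p\), so that \(U(\mf{h}_R)\to\mathrm{gr}_I\Omega\) need not be injective in general; the role of the hypothesis \(R=\zp\) or \(n|R|<p|A|\), through Lemma \ref{lem:binomial}, is exactly to push all such phenomena into degree greater than \(3\). Once the ordered monomials are known to be an \(R\)-basis of \(I^2/I^3\) and \(I^3/I^4\), the unimodular changes of basis of the second paragraph transfer freeness to \(S_2\) and \(S_3\), completing the proof.
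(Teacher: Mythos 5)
Your reduction to $H=\mr{U}_3(\zp)$ via Lemma \ref{lem:binomial}, the relation $(1+x)(1+y)=(1+y)(1+x)(1+z)$ (equivalently the paper's $w=(1+y)(1+x)z-(xy-yx)$), the spanning argument by ordered monomials $x^iz^jy^k$ with $i+2j+k=m$, and the unimodular change of basis relating these to $S_2$ and $S_3$ are all correct and track the first half of the paper's proof. The divergence, and the gap, is in the linear-independence step for $n=3$. The paper proves independence by a direct computation inside the noncommutative power series ring $\Sigma$ over $R\ps{z}$ in $x$ and $y$, graded with $x,y$ in degree $1$ and $z$ in degree $2$: an element of the $R$-span of $S_3$ lying in $(w)+J_4$ must equal $(ax+by)w+w(cx+dy)$ modulo $J_4$, and comparing coefficients of the monomials $x^2y$, $xyx$, $yxy$, $xy^2$, which are not in $S_3$, forces $a=b=c=d=0$. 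Your separating-functionals replacement does not suffice as stated: the standard representation $H\to\mr{U}_3(R)$ sends $I$ into the strictly upper-triangular part of $M_3(R)$, which is nilpotent of index $3$, so it annihilates all of $I^3$ and detects nothing in $I^3/I^4$, in particular neither $xz$ nor $zy$. After the abelianization kills the four $z$-free degree-$3$ monomials (a correct use of the bicyclic case), the entire remaining burden, namely that $c_1xz+c_2zy\in I^4$ forces $c_1=c_2=0$, rests on your unspecified ``suitable auxiliary representation,'' which is never constructed; this is exactly the hard half of the lemma. The Lazard fallback also fails as stated: $\mr{U}_3(\zp)$ is not uniform (its commutator subgroup, the center, is not contained in the subgroup generated by $p$th powers), Lazard's theory concerns the $(p,I)$-adic rather than the $I$-adic filtration over $R=\Z/p^s\Z$, and the restriction to odd $p$ is not in the lemma. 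The PBW framing correctly predicts the ranks $4$ and $6$ but merely repackages the independence question; it supplies no lower bound.

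The gap is fillable within your own strategy: the paper's partial defining systems $\phi_{xz}\colon H\to\mr{U}_3(R)\times\mr{U}_2(R)$ (via $(\mathrm{id},\chi)$) and $\phi_{yz}$ (via $(\psi,\mathrm{id})$) give, through Lemma \ref{mapnilpotent}, functionals $p_{xz},p_{yz}\colon I^3/I^4\to R$ coming from five-dimensional block representations not factoring through $H^{\ab}$, and a computation as in the paper's worked example for $p_z$ shows these separate $xz$ from $zy$ on the spanning set (in the paper the $p_s$ for $s\in S_3$ form the dual basis). But as written, your argument proves the lemma only for $I^2/I^3$; for $I^3/I^4$ the decisive step is asserted, not proved.
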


\begin{proof}
	For any $n$, Lemma \ref{lem:binomial} and the condition that $n|R| < p|A|$ in the case that $A$ is finite are 
	enough to guarantee that the
	quotient $\Omega/I^{n+1}$ is isomorphic to the analogous quotient with $A$ replaced by $\zp$, so we may
	suppose in this proof that $H = \mr{U}_3(\zp)$.  
 
 	Let $\Sigma$ be the noncommutative $R\ps{z}$-power series ring $\Sigma$ in variables $x$ and $y$.
	It follows from the standard presentation of $\mr{U}_3(\zp)$ as a finitely generated pro-$p$ group that
	$\Omega = R\ps{\mr{U}_3(\zp)}$ is the quotient of $\Sigma$ by the ideal generated by
	\begin{equation} \label{w}
		w = (1+y)(1+x)z - (xy-yx).
	\end{equation}
	The augmentation ideal $I$ of $\Omega$ is $(x,y)$, so $I^n$ is generated by the monomials
	in $x$ and $y$ of degree at least $n$.  Using \eqref{w}, we can reduce this to
	$$
		I^n = (y^j x^i z^k \mid i+j+2k \ge n).
	$$
	It is therefore enough to check that the image of the set $S_n$ is $R$-linearly independent in $I^n/I^{n+1}$ for $n\in \{2,3\}$.
	
	Consider $\Sigma$ as a graded $R$-algebra with $x$, $y$, and $z$ in degrees $1$, $1$, and $2$, respectively.  Let $J_n$ denote the ideal of elements of $\Sigma$ of degree at least $n$.
	Suppose that $f\in \Sigma$ lies in the intersection of the $R$-span of the elements of $S_n$ with $(w)+J_{n+1}$. When $n=2$, one can easily see that $f=0$. When $n=3$, there are $a, b, c, d \in R$ such that 
	\[
		f+J_4 = (ax+by)w + w(cx+dy)+J_4.
	\]
	By the hypothesis on $f$, the degree $3$ terms above are in the $R$-span of $S_3$, which forces $a=b=c=d=0$, 
	and hence $f=0$.
\end{proof}

Let us first consider the case $n = 2$.  By Lemma \ref{gradquot}, we see that $I^2/I^3$ is a free
$R$-module on the set $S_2 = \{x^2,y^2,yx,z\}$.
We consider the three partial defining systems
\[
\phi_{x^2} = \binmat{\chi}{2}, \quad \phi_{y^2} = \binmat{\psi}{2}, \quad \phi_z \colon H \to \mr{U}_3(R) \times \mr{U}_1(R) = \mr{U}_3(R),
\]
with $a=2$ and $b=0$, where $\phi_z$ is the quotient map on coefficients, and the partial defining system
\[
\phi_{yx} = (\chi,\psi) \colon H \to \mr{U}_2(R) \times \mr{U}_2(R) = R \times R
\]
for $a=b=1$.  By Theorem \ref{connecting}, the partial defining systems $\phi_{x^2}$, $\phi_{y^2}$, $\phi_{z}$, and $\phi_{yx}$ correspond to Massey products $(\chi,\chi, \sdot)$, $(\psi,\psi,\sdot)$, $(\chi,\psi,\sdot)$, and $(\chi, \sdot, \psi)$, respectively.

As for $n = 3$, the graded quotient $I^3/I^4$ is a free $R$-module on $S_3$ of Lemma \ref{gradquot}.  
For each $s \in S_3$, we define a partial defining system $\phi_s$ (viewed as a pair of homomorphisms) as follows:
\begin{align*}
\phi_{x^3}\colon H \xrightarrow{\binmat{\chi}{3},0} \mr{U}_4(R) \times \mr{U}_1(R), \\
\phi_{xz}\colon H \xrightarrow{\mathrm{id},\chi} \mr{U}_3(R) \times \mr{U}_2(R), \\
\phi_{yx^2}\colon H \xrightarrow{\psi,\binmat{\chi}{2}} \mr{U}_2(R) \times \mr{U}_3(R), \\
\phi_{y^2x}\colon H \xrightarrow{\binmat{\psi}{2},\chi} \mr{U}_3(R) \times \mr{U}_2(R), \\
\phi_{y^3}\colon H  \xrightarrow{\binmat{\psi}{3},0} \mr{U}_4(R) \times \mr{U}_1(R), \\
\phi_{yz}\colon H \xrightarrow{\psi,\mathrm{id}} \mr{U}_2(R) \times \mr{U}_3(R).
\end{align*}
By Theorem \ref{connecting}, each partial defining system corresponds to a collection of Massey products as follows:
\begin{align*}
\phi_{x^3} \longleftrightarrow (\chi,\chi,\chi,\sdot), \\
\phi_{xz} \longleftrightarrow (\chi,\psi,\sdot,\chi), \\
\phi_{yx^2} \longleftrightarrow (\psi,\sdot,\chi,\chi), \\
\phi_{y^2x} \longleftrightarrow (\psi,\psi,\sdot,\chi), \\
\phi_{y^3} \longleftrightarrow (\psi,\psi,\psi,\sdot), \\
\phi_{yz}\longleftrightarrow (\psi,\sdot,\chi,\psi). 
\end{align*}
 
For each $s \in S_n$ with $n \in \{2,3\}$, the diagram \eqref{eq:general diagram} becomes
$$
\begin{tikzcd}
0 \arrow[r] & T \otimes_R I^n/I^{n+1} \arrow[r] \arrow{d}{p_s} & T \otimes_R \Omega/I^{n+1} \arrow[r] \arrow{d}{p_s} & T \otimes_R \Omega/I^n \arrow[r] \arrow{d}{p_s} & 0 \\
0 \arrow[r] & T \arrow[r] & \mf{U}_s(T) \arrow[r] & \mf{U}'_s(T) \arrow[r] & 0,
\end{tikzcd}
$$
where the maps $p_s$ are induced by the map $\phi_s$ and we have used the shorthand $\mf{U}_s(T)$ for $\mf{U}_{\phi_s}(T)$ 
(and similarly for the quotients).
Note that $p_s \colon T \otimes_R I^n/I^{n+1} \to T$ is just the $R$-tensor product of the likewise-defined
$p_s \colon I^n/I^{n+1} \to R$ with the identity on $T$.  
The maps $p_s \colon I^n/I^{n+1} \to R$ for $s \in S_n$ form the dual basis to the $R$-basis $S_n$ of $I^n/I^{n+1}$.  
This can be seen by an omitted direct computation, proceeding as in the following example.

\begin{example}
Suppose that $n = 2$, and take $s = z \in S_2$.
Recall that $\phi_z \colon H \to \mr{U}_3(R)$ is given by the canonical surjection $A \to R$ on coefficients.  By definition of $p_z \colon \Omega/I^3 \to 
\mf{U}_{\phi_z}(R) = M_{3,1}(R)$ in Lemma \ref{mapnilpotent}, 
we have 
$$
	p_z([h]) = \phi_z(h)  \left( \begin{smallmatrix} 0 \\0\\1 \end{smallmatrix} \right) \in M_{3,1}(R)
$$ 
for all $h \in H$.
Recalling that $x+1$, $y+1$, and $z+1$ are the group elements of matrices as in \eqref{generators}, we compute
\begin{equation}
\begin{aligned}
&p_{z}(x^2)
=  \left( \begin{smallmatrix} 0 \\0\\1 \end{smallmatrix} \right) - 2 \left( \begin{smallmatrix} 0 \\0\\1 \end{smallmatrix} \right)  +  \left( \begin{smallmatrix} 0 \\0\\1 \end{smallmatrix} \right)  = 0,\\
& p_{z}(y^2)=
\left( \begin{smallmatrix} 0 \\2\\1 \end{smallmatrix} \right) - 2  \left( \begin{smallmatrix} 0 \\1\\1 \end{smallmatrix} \right) +  \left( \begin{smallmatrix} 0 \\0\\1 \end{smallmatrix} \right) = 0,\\
& p_{z}(yx) =
 \left( \begin{smallmatrix} 0 \\1\\1 \end{smallmatrix} \right) -  \left( \begin{smallmatrix} 0 \\1\\1 \end{smallmatrix} \right) +
  \left( \begin{smallmatrix} 0 \\0\\1 \end{smallmatrix} \right) -  \left( \begin{smallmatrix} 0 \\0\\1 \end{smallmatrix} \right)  = 0,
\\
& p_{z}(z) =  \left( \begin{smallmatrix} 1 \\0\\1 \end{smallmatrix} \right) -  \left( \begin{smallmatrix} 0 \\0\\1 \end{smallmatrix} \right) 
= \left( \begin{smallmatrix} 1\\0\\0\\
\end{smallmatrix}\right),
\end{aligned}
\end{equation}
and note that $\left( \begin{smallmatrix} 1\\0\\0\\
\end{smallmatrix}\right)$ gives the identity of $R \subset \mf{U}_{\phi_z}(R)$.
\end{example}

By Theorem \ref{prop:general}, we then have the following.

\begin{theorem} \label{heis_image}
For $n \in \{2,3\}$ and $f \in Z^1(G,T \otimes_R \Omega/I^n)$, the element $\Psi^{(n)}([f])$ of
$$
	H^2(G,T) \otimes_R I^n/I^{n+1} \cong \bigoplus_{s \in S_n} H^2(G,T) s
$$ 
is the sum
\begin{equation}
\label{eq:Psi2 image}
(\chi,\chi,\lambda)_{\rho_{x^2}} x^2 + (\chi,\lambda,\psi)_{\rho_{yx}} yx + (\psi,\psi,\lambda)_{\rho_{y^2}}y^2 + (\chi,\psi,\lambda)_{\rho_{z} }z
\end{equation}
for $n = 2$ and the sum
\begin{align}
\begin{split}
\label{eq:Psi3 image}
(\chi,\chi,&\chi,\lambda)_{\rho_{x^3}} x^3 + (\chi,\psi,\lambda,\chi)_{\rho_{xz}} xz + (\psi,\lambda,\chi,\chi)_{\rho_{yx^2}}yx^2 \\
&+ (\psi,\psi,\lambda,\chi)_{\rho_{y^2x} }y^2x+ (\psi,\psi,\psi,\lambda)_{\rho_{y^3} }y^3+ (\psi,\lambda,\chi,\psi)_{\rho_{yz} }yz
\end{split}
\end{align}
for $n = 3$,
where each $\rho_s$ for $s \in S_n$ is the proper defining system relative to $\phi_s$ attached to $p_s \circ f$ by Lemma \ref{correspondence},
and $\lambda$ is the image of $f$ in $Z^1(G,T)$.
\end{theorem}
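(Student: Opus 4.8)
The plan is to extract the coefficient of each basis element $s \in S_n$ from $\Psi^{(n)}(f)$ and identify it with the claimed Massey product via Theorem \ref{prop:general}. By Lemma \ref{gradquot}, the set $S_n$ is an $R$-basis of $I^n/I^{n+1}$, so there is a canonical decomposition $H^2(G,T) \otimes_R I^n/I^{n+1} \cong \bigoplus_{s \in S_n} H^2(G,T)\,s$, and computing $\Psi^{(n)}(f)$ reduces to computing its component in each summand $H^2(G,T)\,s$. Since the functionals $p_s \colon I^n/I^{n+1} \to R$ form the dual basis to $S_n$, that component is obtained by applying $\mathrm{id}_{H^2(G,T)} \otimes p_s$, which is exactly the map on $H^2$ induced by the lefthand vertical arrow of the instance of \eqref{eq:general diagram} attached to $\phi_s$.

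I would then apply Theorem \ref{prop:general} once for each $s \in S_n$. For the partial defining system $\phi_s$, with off-diagonals $(\alpha_s,\beta_s)$, this gives
\[
	p_s\bigl(\Psi^{(n)}([f])\bigr) = \bigl(\alpha_s,\, (p_s \circ f)_{a+1,1},\, \beta_s\bigr)_{\rho_s},
\]
where $\rho_s$ is the proper defining system attached to $p_s \circ f$ by Lemma \ref{correspondence}. The systems $\phi_s$ were selected precisely so that the pairs $(\alpha_s,\sdot,\beta_s)$ are the Massey products tabulated before the theorem; for example $\phi_{yx} = (\chi,\psi)$ produces $(\chi,\sdot,\psi)$, while $\phi_z$, being the coefficient reduction $\mr{U}_3(A) \to \mr{U}_3(R)$ with off-diagonal $(\chi,\psi)$, produces $(\chi,\psi,\sdot)$, and similarly in the six cases for $n=3$.

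What remains is to check that the middle cocycle $(p_s \circ f)_{a+1,1}$ is $\lambda$, the image of $f$ under the augmentation $\Omega/I^n \to R$. This is the analogue of Corollary \ref{cor:bicyclic}, and I would deduce it from the description of $p_s$ in Lemma \ref{mapnilpotent}: the constant term of $f$ maps to the matrix $e$, whose sole nonzero entry is the $1$ in position $(a+1,1)$, whereas every monomial of positive degree lies in $I\,\mathfrak{U}_{a,b}(R) \subseteq J\,\mathfrak{U}_{n+2}(R)$ and hence has vanishing $(a+1,1)$-entry. Thus the $(a+1,1)$-entry of $p_s \circ f$ reads off the constant coefficient of $f$, namely $\lambda$. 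Summing $p_s(\Psi^{(n)}(f))\,s$ over $s \in S_n$ then reproduces the displayed sums \eqref{eq:Psi2 image} and \eqref{eq:Psi3 image}.

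The main obstacle is the bookkeeping behind the dual-basis claim for the $p_s$, i.e.\ the omitted verification that $p_s(s') = \delta_{s,s'}$ for all $s,s' \in S_n$. For $n=3$ this is a $6 \times 6$ table of matrix computations in the spirit of the worked Example for $s=z$; I would organize it by observing that $p_s$ annihilates any monomial whose degrees in $x$, $y$, and $z$ do not match the block structure prescribed by $\phi_s$, and returns the identity of $R$ on $s$ itself. Once this orthonormality is in hand, the rest of the argument is the formal extraction of coefficients described above.
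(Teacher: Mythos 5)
Your proposal is correct and is essentially the paper's own proof: the paper likewise shows that the maps $p_s \colon I^n/I^{n+1} \to R$ form the dual basis to $S_n$ (via an omitted direct computation, illustrated only for $s = z$) and then deduces the theorem by applying Theorem \ref{prop:general} once for each partial defining system $\phi_s$. Your explicit check that $(p_s \circ f)_{a+1,1} = \lambda$, using the containment $I\,\mathfrak{U}_{a,b}(R) \subseteq J\,\mathfrak{U}_{n+2}(R)$ from Lemma \ref{mapnilpotent}, correctly fills in a step the paper leaves implicit (its analogue of Corollary \ref{cor:bicyclic} for the Heisenberg case), but the overall route is the same.
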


As before, Theorem \ref{augfilt} then provides the following isomorphisms.

\begin{corollary} \label{heis_cor}
	Suppose that $G$ is $p$-cohomologically finite of $p$-cohomological dimension $2$.
	For $n \in \{2,3\}$, 
	let $P_n(H)$ denote the subgroup of $H^2(G,T) \otimes_R I^n/I^{n+1}$ consisting of all sums in \eqref{eq:Psi2 image}
	for $n = 2$ and in \eqref{eq:Psi3 image} for $n = 3$.  We then  have a canonical isomorphism of $R$-modules
	$$ 
	\frac{I^nH^2_{\Iw}(N,T)}{I^{n+1} H^2_{\Iw}(N,T)} \cong \frac{H^2(G,T) \otimes_R I^n/I^{n+1}}{P_n(H)}.
	$$
\end{corollary}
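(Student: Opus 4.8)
The plan is to read the corollary off from the explicit computation of $\Psi^{(n)}$ in Theorem \ref{heis_image} together with the general descent isomorphism of Theorem \ref{augfilt}. First I would confirm that the standing hypotheses of Section \ref{descent} required to invoke Theorem \ref{augfilt} hold in the present situation. The group $G$ is assumed $p$-cohomologically finite of $p$-cohomological dimension $d = 2$, which supplies the value of $d$ used below. The ring $R$ is by construction a nonzero quotient of $A$, hence is isomorphic to $\Z_p$ or to $\Z/p^s\Z$, each of which is a complete commutative local Noetherian $\Z_p$-algebra with finite residue field $\F_p$. The group $H = \mr{U}_3(A)$ is a compact $p$-adic Lie group, being (for $A = \Z_p$) a closed subgroup of $\GL_3(\Z_p)$ and (for finite $A$) itself finite, so hypothesis (i) of that section is met. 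Finally, the flatness of $\Omega/I^n$ and $I^n/I^{n+1}$ over $R$ for $n \in \{2,3\}$ is exactly what is established in Lemma \ref{gradquot} under the running assumption that $R = \Z_p$ or $n|R| < p|A|$.

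With these checks in place, Theorem \ref{augfilt} applied with $d = 2$ furnishes the canonical isomorphism of $R$-modules
$$
\frac{I^n H^2_{\Iw}(N,T)}{I^{n+1} H^2_{\Iw}(N,T)} \cong \frac{H^2(G,T) \otimes_R I^n/I^{n+1}}{\image \Psi^{(n)}},
$$
so the entire content of the corollary reduces to the identification $\image \Psi^{(n)} = P_n(H)$. This identification is immediate from Theorem \ref{heis_image}. Indeed, that theorem exhibits $\Psi^{(n)}([f])$, for each $1$-cocycle $f \in Z^1(G, T \otimes_R \Omega/I^n)$, as exactly a sum of the form \eqref{eq:Psi2 image} when $n = 2$ and of the form \eqref{eq:Psi3 image} when $n = 3$, with the proper defining systems $\rho_s$ and the cocycle $\lambda$ determined by $f$. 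As $f$ ranges over all of $Z^1(G, T \otimes_R \Omega/I^n)$, its class ranges over all of $H^1(G, T \otimes_R \Omega/I^n)$, so the collection of these sums is precisely the image of the $R$-linear map $\Psi^{(n)}$; by the definition given in the statement this collection is the submodule $P_n(H)$. Substituting $\image \Psi^{(n)} = P_n(H)$ into the displayed isomorphism yields the claim.

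I expect essentially no obstacle here beyond bookkeeping, since the corollary is a formal consequence of the two cited theorems once the running hypotheses are confirmed. The two points that warrant a moment's care are the verification that $P_n(H)$, defined as a set of sums indexed by cocycles, genuinely coincides with the image of a single homomorphism (which it does, since each sum is a value $\Psi^{(n)}([f])$ and $\Psi^{(n)}$ is $R$-linear, so its image is automatically an $R$-submodule), and the confirmation that the flatness hypotheses demanded by Theorem \ref{augfilt} are precisely those secured by Lemma \ref{gradquot}. Neither of these introduces any genuine difficulty.
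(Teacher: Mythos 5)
Your proof is correct and follows exactly the paper's route: the paper deduces this corollary in the same way, by combining the descent isomorphism of Theorem \ref{augfilt} (with $d=2$) with the identification $\image \Psi^{(n)} = P_n(H)$ immediate from Theorem \ref{heis_image}. Your verification of the standing hypotheses (that $\mr{U}_3(A)$ is a compact $p$-adic Lie group, and that the needed $R$-flatness follows from the free graded pieces in Lemma \ref{gradquot}) is sound bookkeeping that the paper leaves implicit.
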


\section{Applications to cyclotomic fields}\label{sec:cyclo}

In this section, we apply our general results to study class groups of finite extensions of a cyclotomic field $\Q(\mu_p)$ with $p$ an irregular prime. That is, under assumptions that include vanishing of certain cup products, we are able to bound the sizes of the $p$-parts of the class groups from below. We satisfy ourselves with describing a particularly clean setting of $p$-ramified $p$-extensions of $\Q(\mu_p)$, wherein the $p$-parts of class groups can be directly identified with second cohomology groups. Rather general results in an Iwasawa-theoretic context may be obtained as in \cite[Section 4]{reciprocity}. We consider $p$-ramified bicyclic and Heisenberg extensions; some simpler examples over cyclic extensions can be gleaned from the Iwasawa-theoretic treatment given in \cite[Section 7]{sh-massey}. 

We note the existence of a variety of works on Massey products in Galois groups with restricted ramification and the structure of class groups from perspectives different than ours, ranging from the much earlier work of Morishita \cite{morishita2004} and Vogel \cite{vogelthesis} to the very recent preprint of Ahlqvist--Carlson \cite{AC2022} concerning Massey products in \'etale cohomology.

\subsection{Notation and preliminaries} 
\label{subsec:cyclo setup}
In this subsection, we recall some standard facts regarding the mod $p$ unramified outside $p$ cohomology of the $p$th cyclotomic field, for an odd prime $p$. Most of these may be found, for instance, in \cite{mcs}.  Let $\Cl_K$ denote the ideal class group of a number field $K$. Let $S$ denote the set of primes over $p$ in any number field. Let $\Cl_{K,S}$ denote the $S$-class group of $K$, which is to say the class group of the ring $\mc{O}_{K,S}$ of $S$-integers of $K$. Let $G_{K,S}$ denote the Galois group of the maximal unramified outside $S$, or \emph{$p$-ramified}, extension of $K$. 

For any number field $K$ and prime $p$, Kummer theory provides an exact sequence
\[
0 \to \mc{O}_{K,S}^\times \otimes_{\Z} \F_p \to H^1(G_{K,S},\mu_p) \to \Cl_{K,S}[p] \to 0
\]
and a 
canonical injection
\[
\Cl_{K,S}  \otimes_{\Z} \F_p \xhookrightarrow{} H^2(G_{K,S},\mu_p)
\]
of $\F_p[\Delta]$-modules. The latter injection is an isomorphism if $K$ is a $p$-ramified, purely imaginary extension of $\Q$ with a unique prime over $p$.  
We shall write Massey products of elements of $H^1(G_{F,S},\mu_p)$ as products of elements of $F^{\times}/F^{\times p}$ whose Kummer cocycles (in this case characters) give classes in $H^1(G_{F,S},\mu_p)$, as opposed to the cocycles themselves. 

Now let $F = \Q(\zeta_p)$ for an odd prime $p$ and a primitive $p$th root of unity $\zeta_p$.
Note that $\mc{O}_{F,S} = \Z[\zeta_p,\frac{1}{p}]$ and $\Cl_{F,S} = \Cl_F$, since the prime $(1-\zeta_p)$ over
$p$ is principal.
Let $\Delta = \Gal(F/\Q)$, and let $\omega \colon \Delta \to \Z_p^\times$ be unique lift of the mod $p$ cyclotomic character. 
For $j \in \Z$, the $\omega^j$-isotypical component, or \emph{eigenspace}, of a $\Z_p[\Delta]$-module $M$ is
$$
	M\up{j} = \{ m \in M \mid \delta m = \omega(\delta)^j m \text{ for all } \delta \in \Delta \}.
$$ 

We say that a positive even integer $k < p$ is an \emph{irregular index} for $p$ if $\Cl_F[p]\up{1-k} \ne 0$, or equivalently, $p$ divides the numerator of the $k$th Bernoulli number $B_k$. As $p$ divides the denominator of $B_{p-1}$, every irregular index $k$ for $p$ satisfies $k \le p-3$.

We suppose that $p$ satisfies Vandiver's conjecture that $\Cl_{\Q(\zeta_p+\zeta_p^{-1})}[p] = 0$. By Leopoldt's reflection principle, this implies that for each irregular index $k$, the eigenspace $\Cl_F[p]\up{1-k}$ is cyclic, so we fix a generator and let $\alpha_k \in H^1(G_{F,S},\mu_p)^{(1-k)}$ be its unique lift.  
This also allows us to identify $H^2(G_{F,S},\mu_p)\up{1-k}$ with $\F_p$ via the isomorphisms
\[
\F_p \xrightarrow{1 \mapsto \alpha_k} H^1(G_{F,S},\mu_p)^{(1-k)} \xrightarrow{\sim} \Cl_F[p]\up{1-k} \xrightarrow{\sim} (\Cl_F \otimes_\Z \F_p)^{(1-k)} \xrightarrow{\sim} H^2(G_{F,S},\mu_p)\up{1-k},
\]
where the isomorphism $(\Cl_F \otimes_\Z \F_p)^{(1-k)} \xrightarrow{\sim} \Cl_F[p]\up{1-k}$ is multiplication by a power of $p$.

For an odd integer $i$, we define
\[
\eta_i \in (\mc{O}_{F,S}^\times \otimes_{\Z} \F_p)\up{1-i}
\]
to be the projection of $1-\zeta_p$ into that eigenspace. We often refer to the index $i$ as taking values in $\Z/(p-1)\Z$.
Via Kummer theory, we identify $\eta_i$ with an element of 
\[
H^1(G_{F,S},\mu_p)\up{1-i} \cong (H^1(G_{F,S},\mu_p) \otimes_{\Z} \mu_p^{\otimes (i-1)})^\Delta \cong H^1(G_{F,S},\mu_p^{\otimes i})^\Delta
\]
and $\zeta_p$ with an element of $H^1(G_{F,S},\mu_p)^{(1)}$. 
Vandiver's conjecture for $p$ is equivalent to the statement that
every $\eta_i$ is nontrivial. We codify all this and a bit more in the following remark.

\begin{remark} \label{eiggens}
For any positive integer $j < p$, the eigenspace $H^1(G_{F,S},\mu_p)^{(1-j)}$ is cyclic, generated by the element
\begin{itemize}
	\item $\eta_j$ if $j$ is odd,
	\item $\zeta_p$ if $j = p-1$,
	\item $\alpha_j$ if $j$ is an irregular index,
\end{itemize}
and is trivial for all other $j$.
If $i$ is odd, then the cup product with $\eta_i$ vanishes on $H^1(G_{F,S},\mu_p)$ if and only if $\eta_i \cup \eta_{k-i} = 0$ for all irregular indices $k$ for $p$.
\end{remark}

Given a $p$-extension $L/F$ that is unramified outside $p$ and for which $L/\Q$ is Galois, we can consider its Galois group $H = \Gal(L/F)$, which is of course normal inside $\Gal(L/\Q)$. Set $\Omega = \F_p[H]$ as before. We have an action of $G_{\Q,S}$ on $\Omega$ such that $g \in G_{\Q,S}$ sends the group element $[h]$ of $h \in H$ to $[\bar{g}h\bar{g}^{-1}]$, where $\bar{g}$ is the image of $g$ in $G$.

Since $G_{F,S}$ is normal in $G_{\Q,S}$, this $G_{\Q,S}$-action (together with right conjugation on $G_{F,S}$ in the usual fashion) induces a $\Gal(L/\Q)$-action on $H^*(G_{F,S},\mu_p \otimes_{\F_p} I^n/I^m)$ for every $0 \le n < m$. For $m = n+1$, this action factors through $\Delta$. We fix a lift of $\Delta$ to a subgroup of $\Gal(L/\Q)$ so that we may speak of the $\Delta$-action on these cohomology groups for all $n < m$, though in general this action depends upon the choice of lift. The generalized Bockstein maps $\Psi^{(n)}$ are then $\Delta$-equivariant.

\subsection{Class groups of bicyclic and Heisenberg extensions}

In this subsection, we let $i, j < p$ be distinct odd positive integers and set $K = F(\eta_i^{1/p},\eta_j^{1/p})$, with the slight abuse of notation
that we are in fact taking $p$th roots of any lifts of $\eta_i$ and $\eta_j$.  Note that $K/\Q$ is Galois. We assume throughout this subsection that 
\begin{itemize}
	\item $\Cl_F[p]$ is cyclic, and
	\item the cup products $\eta_i \cup \eta_{k-i}$ and $\eta_j \cup \eta_{k-j}$ vanish.
\end{itemize}
By the first assumption, $p$ has a unique irregular index $k$ and Vandiver's conjecture holds for $p$. In particular, $K$ is an $\F_p^2$-extension
of $F$. The interested reader might calculate how the bounds we give are worsened as one weakens these assumptions.

For $h \in \Z/(p-1)\Z$, let 
\[
	\delta_h = \begin{cases} 1 & \text{if } h \in \{0,k\}, \\ 0 & \text{otherwise}. \end{cases}
\]

\begin{proposition}\label{prop:bicycbound} 
We have
\[
\dim_{\F_p} H^2(G_{K,S},\mu_p) \ge 6-\delta_{2i}-\delta_{i+j}-\delta_{2j}.
\]
\end{proposition}

\begin{proof}
We will apply the results of Section \ref{bicyc_case} in the case that $G=G_{F,S}$ and $H=\Gal(K/F)$. 
To construct our lower bound, we will use the fact that
$\dim_{\F_p} H^2(G_{K,S},\mu_p) \ge \sum_{n=0}^2 d_n$,
where
$$
	d_n = \dim_{\F_p} \frac{I^nH^2(G_{K,S},\mu_p)}{I^{n+1}H^2(G_{K,S},\mu_p)}.
$$
So, first note that
\[
\frac{H^2(G_{K,S},\mu_p)}{IH^2(G_{K,S},\mu_p)} \cong  H^2(G_{F,S},\mu_p) \cong (\Cl_{F} \otimes_{\Z} \F_p)^{(1-k)},
\]
and the latter group has $\F_p$-dimension 1 by our assumption of Vandiver's conjecture, so $d_0 = 1$.

Let $x$ and $y$ be the ordered basis of $I/I^2 \cong H$ that is Kummer dual to $\eta_i$ and $\eta_j$.
The quantity $(\eta_i \cup \lambda)x + (\eta_j \cup \lambda)y$ is zero for $\lambda$ one of the generators of 
$H^1(G_{F,S},\mu_p)$ listed in Remark \ref{eiggens} unless (perhaps) if $\lambda$ is one of $\eta_{k-i}$ or $\eta_{k-j}$, in which cases it equals
$(\eta_i \cup \eta_{k-i})x$ and $(\eta_j \cup \eta_{k-j})y$, respectively.
Thus, by Proposition \ref{prop:abelian} and Theorem \ref{augfilt} for $n = 1$, we have
\[
\frac{IH^2(G_{K,S},\mu_p)}{I^2H^2(G_{K,S},\mu_p)} \cong \frac{H^2(G_{F,S},\mu_p) \otimes_{\F_p} I/I^2}{\langle(\eta_i \cup \eta_{k-i})  x , (\eta_j \cup \eta_{k-j} )y\rangle },
\]
and given the vanishing of the cup products on the right, we see that $d_1 =  2$.

Theorem \ref{augfilt} tells us that $d_2 = \dim_{\F_p} \coker \Psi^{(2)}$. For $\tilde{\lambda} \in H^1(G_{F,S},\Omega/I^2 \otimes_{\F_p} \mu_p)$ with image $\lambda \in H^1(G_{F,S},\mu_p)$,
Corollary \ref{cor:bicyc} provides the explicit formula
\begin{equation} \label{eq:psi2}
	\Psi^{(2)}(\tilde{\lambda}) = (\eta_i,\eta_i,\lambda)_{\rho_{x^2}} x^2 + (\eta_i,\lambda,\eta_j)_{\rho_{xy}} xy 
	+ (\lambda,\eta_j,\eta_j)_{\rho_{y^2}} y^2. 
\end{equation}
Since cup products with $\eta_i$ and $\eta_j$ are trivial by assumption and Remark \ref{eiggens}, we see that the expression on the right
of \eqref{eq:psi2} is independent of the proper defining systems, and therefore $\Psi^{(2)}$ factors through $H^1(G_{F,S},\mu_p)$.

Now suppose that for some $h$ we have $\lambda \in H^1(G_{F,S},\mu_p)^{(1-h)}$, a space of dimension at most $1$. Note that $\Delta$ acts on $x^2$ through $\omega^{2i}$, on $xy$ through $\omega^{i+j}$, and on $y^2$ through $\omega^{2j}$. We then see by Remark \ref{eiggens} that the Massey products in \eqref{eq:psi2} can be nontrivial if and only if $h-2i$, $h-i-j$, or $h-2j$ (in that order) is congruent to $0$ or $k$ modulo $p-1$, which is to say if and only if $\delta_{2i} = 1$, $\delta_{i+j} = 1$, or $\delta_{2j} = 1$. 
Since $\dim_{\F_p} H^2(G_{F,S},\mu_p) \otimes_{\F_p} I^2/I^3 = \dim_{\F_p} I^2/I^3 = 3$, 
we have $d_2 \ge 3-\delta_{2i}-\delta_{i+j}-\delta_{2j}$, as required.
\end{proof}

Note that $\eta_i \cup \eta_j = 0$, since we must have $j \equiv k-i \bmod p-1$ for this cup product not to vanish, and we have assumed that
$\eta_i \cup \eta_{k-i} = 0$. Thus, there exists a degree $p$ extension $L$ of $K$, Galois over $F$ and unramified outside $p$, such 
that $\Gal(L/F) \cong \mr{U}_3(\F_p)$.  We can and do choose $L$ to be Galois over $\Q$: in fact,
\cite[Proposition 2.7]{sh-thesis} provides the following description of Kummer generators of such fields $L$, viewed as extensions of $K$.

\begin{remark}
	Set $E = F(\eta_i^{1/p})$, and let $\sigma$ be a generator of $\Gal(E/F)$. 
	Write $\eta_j = \prod_{i=0}^{p-1} \sigma^i \beta'$ for some $\beta' \in E^{\times}/E^{\times p}$.
	Pick a lift of $\Delta$ to a subgroup of $\Gal(E/\Q)$. Let $\beta$ be 
	the projection of $\beta'$ to the $\omega^j$-eigenspace of $E^{\times}/E^{\times p}$ for the action of
	this lift. The fact that $\eta_j \in (F^{\times}/F^{\times p})^{(j)}$ implies that $\eta_j = \prod_{i=0}^{p-1} \sigma^i \beta$ as well. 
	We then have $L = K((c\gamma)^{1/p})$ for $\gamma = \prod_{i=1}^{p-1} \sigma^i \beta^i$ and any
	$c \in H^1(G_{F,S},\mu_p)^{(1-i-j)}$.
	The latter group is zero if $\delta_{i+j} = 0$
	(see Remark \ref{eiggens}), in which case $L$ is unique.
\end{remark}

The group $\Delta$ acts on $\Gal(L/K)$ by conjugation through $\omega^{i+j}$. It may be helpful for the reader to view
$\Gal(L/\Q)$ as the group of matrices
\[
\begin{pmatrix}
\omega(\delta)^i & * & *\\
0 & 1 & *\\
0 & 0 & \omega(\delta)^{-j}
\end{pmatrix}
\]
for some $\delta \in \Delta$.

\begin{proposition}\label{prop:heisbound}
We have
\[
\dim_{\F_p} H^2(G_{L,S},\mu_p) \ge 7-\delta_{2i}-\delta_{2j}-\delta_{i+j}.
\]
\end{proposition}

\begin{proof}
We will apply the results of Section \ref{heis_case} in the case that $G=G_{F,S}$ and $H=\Gal(L/F)$. Set
$$
	d_n = \dim_{\F_p} \frac{I^nH^2(G_{L,S},\mu_p)}{I^{n+1}H^2(G_{L,S},\mu_p)}.
$$
We have $d_0 = 1$ and $d_1 = 2$ by the same arguments as for $K/F$ (noting that in the Heisenberg case, we still have $I/I^2 \cong H^{\ab}
\cong \F_p^2$). As in the proof of Proposition \ref{prop:bicycbound}, we must give a lower bound on $d_2 = \dim_{\F_p} \coker \Psi^{(2)}$.

Corollary \ref{heis_cor} tells us that for $\tilde{\lambda} \in H^1(G_{F,S},\Omega/I^2 \otimes_{\F_p} \mu_p)$ with image $\lambda \in H^1(G_{F,S},\mu_p)$, we have
\[
	\Psi^{(2)}(\tilde{\lambda}) = (\eta_i,\eta_i,\lambda)_{\rho_{x^2}} x^2 + (\eta_i,\lambda,\eta_j)_{\rho_{yx}} yx + 
	(\eta_j,\eta_j,\lambda)_{\rho_{y^2}} y^2 + (\eta_i,\eta_j,\lambda)_{\rho_z} z.
\]
Again, the vanishing of $\eta_i \cup \eta_{k-i}$ and $\eta_j \cup \eta_{k-j}$ ensures that $\Psi^{(2)}(\tilde{\lambda})$ depends only on $\lambda$. As before, but now noting also that $\Delta$ acts on $z \in I^2/I^3$ by $\omega^{i+j}$, we see that these Massey products
must vanish unless $\delta_{2i} = 1$, $\delta_{i+j} = 1$, $\delta_{2j} = 1$, and $\delta_{i+j} = 1$, respectively. Moreover, if $\delta_{i+j} = 1$,
then the image of $\Psi^{(2)}$ on $H^1(G_{F,S},I/I^2 \otimes_{\F_p} \mu_p)^{(k-i-j)}$ is at most one-dimensional, generated by $(\eta_i,\lambda,\eta_j)yx + (\eta_i,\eta_j,\lambda)z$ for $\lambda = \zeta_p$ or $\lambda = \alpha_k$, by Remark \ref{eiggens} (and similarly for the other cases). Thus, we have $d_2 \ge 4 - \delta_{2i} - \delta_{2j} - \delta_{i+j}$.
\end{proof}

\begin{remark} \label{rem:pairings} \
	\begin{enumerate}
		\item If $2i \equiv k \bmod p-1$, so in particular $\delta_{2i} = 1$, 
		then the condition that $\eta_i \cup \eta_i = 0$ is automatic by antisymmetry of the cup product. 
		\item It occurs that $\delta_{2i}$, $\delta_{2j}$, and $\delta_{i+j}$ are all $1$ if and only if $p$ is $1$ modulo
		$4$ but not $8$ and $k = \frac{p-1}{2}$, so that we have $\{i,j\} = \{\frac{p-1}{4},\frac{3(p-1)}{4}\}$. 
		
		We can have $2i \equiv 0 \bmod p-1$ only if $p \equiv 3 \bmod 4$, in which case $i = \frac{p-1}{2}$. We
		can then also choose $j$ such that $2j \equiv k \bmod p-1$ (see
		Example \ref{ex:twodeltas}), but then $i+j$ is either $\frac{k}{2}$ or $\frac{k}{2}+\frac{p-1}{2}$
		modulo $p-1$, which cannot be $0$ or $k$, so $\delta_{i+j} = 0$.
		
		\item The $p$th root of $\eta_{p-k}$ generates the unique degree $p$ unramified extension of $F$, and it satisfies
		$\eta_{p-k} \cup \eta_{2k-1} = 0$. In such a setting, $\dim_{\F_p} H^2(G_{F(\eta_{p-k}^{1/p}),S},\mu_p) = p-1$, coming
		entirely from the Brauer part of this second cohomology group. 
		
		On the other hand, suppose that $i$ and $j$ are not $p-k$ modulo $p-1$. (By
		what we have just said, we can take one of them to be $2k-1$, so long as $2k-1$ is not $p-k$ modulo $p-1$, i.e.,  
		$3k \not\equiv 2 \bmod p-1$.)
		Then $K/\Q$ is totally ramified at $p$, which forces $L/\Q$ to be as well. 
		This implies that
		\begin{eqnarray*} 
			H^2(G_{K,S},\mu_p) \cong \Cl_{K,S} \otimes \F_p &\mr{and}& H^2(G_{L,S},\mu_p) \cong \Cl_{L,S} \otimes \F_p.
		\end{eqnarray*} 
		In particular, our lower bounds on the $\F_p$-dimensions of these $S$-class groups 
		give lower bounds on the dimensions of the class groups $\Cl_K \otimes \F_p$ and $\Cl_L \otimes \F_p$.
	\end{enumerate}
\end{remark}

We conclude with some numerical examples. Many more are available using the tables referenced in \cite{mcs}, which compute
the cup product pairings up to scalar for primes less than 25,000.

\begin{example} \label{ex:twodeltas}
Let $p=59$, for which $k = 44$ is the unique irregular index. For $i=29$ and $j=51$, we have $2i \equiv 0 \bmod p-1$ and $2j \equiv k \bmod
p-1$, so $\delta_{2i} = \delta_{2j}=1$, while $\delta_{i+j} = 0$. We then have $\eta_j \cup \eta_{k-j} = \eta_{51} \cup \eta_{51} =0$ and $\eta_i \cup \eta_{k-i} = \eta_{29} \cup \eta_{15} = 0$ as in Remark \ref{rem:pairings}, noting that $15 = p-k$.
Given this, Propositions \ref{prop:bicycbound} and \ref{prop:heisbound} provide the following lower bounds on the $p$-ranks of the class groups of $K$ and $L$:
\begin{eqnarray*}
	\dim_{\F_p} \Cl_K \otimes \F_p \ge 4 &\mr{and}& \dim_{\F_p}\Cl_L \otimes \F_p \ge 5.
\end{eqnarray*}
The relevant, potentially nonzero, Massey triple products in this example are $(\eta_{51},\eta_{51},\zeta_{59})$ and $(\eta_{29},\eta_{29},\alpha_{44})$.
\end{example}

\begin{example} \label{ex:onedelta}
Let $p=67$, for which $k = 58$ is the unique irregular index. For $i = 29$ and $j = 49$, we have $\delta_{2i} = 1$ and $\delta_{2j} = \delta_{i+j} = 0$. Since $p-k = 9 \notin \{29,49\}$, we have 
\begin{eqnarray*}
	\dim_{\F_p} \Cl_K \otimes \F_p \ge 5 &\mr{and}& \dim_{\F_p}\Cl_L \otimes \F_p \ge 6.
\end{eqnarray*}
Here, the interesting Massey product is $(\eta_{29},\eta_{29},\zeta_{67})$.
\end{example}

It is not hard to find examples in which all the error terms vanish, so the maximal lower bounds are achieved.

\begin{example}
Let $p=101$, which has unique irregular index $k = 68$. Take $i=13$ and $j=35$. The computations referenced in \cite{mcs} show that $\eta_{13} \cup \eta_{55}=0$, and $\eta_{35} \cup \eta_{33}=0$ holds since $p-k = 33$. Since $\delta_{i+j}=\delta_{2i}=\delta_{2j}=0$, we have
\begin{eqnarray*}
	\dim_{\F_p} \Cl_K \otimes \F_p \ge 6 &\mr{and}& \dim_{\F_p}\Cl_L \otimes \F_p \ge 7.
\end{eqnarray*}
The same lower bounds are achieved for $\{i,j\} = \{35,55\}$. (Note that $\{i,j\} = \{13,55\}$ has $\delta_{i+j} = 1$, so the bounds for this
pair are one worse.)
\end{example}

Notice that genus theory has no contribution to the lower bounds (for $S$-class groups) in the above examples. Indeed, an unramified $\F_p$-extension of either $K$ or $L$ which descends to an abelian extension of $F$ would contribute to the zeroth graded piece in the augmentation filtration, but in these examples this is entirely accounted for the class group of $F$ (i.e., all such extensions are already unramified over $F$).

\section{Massey vanishing for absolute Galois groups} \label{masseyvanish}

In this final section of this paper, we apply our techniques to study absolute Galois groups of fields. The motivating problem is to determine which profinite groups can be isomorphic to the absolute Galois group $G_F$ of a field $F$.  Artin and Schreier showed in 1927 that any nontrivial finite group with this property is the cyclic group of order two.  Other restrictions are reflected in the cohomological properties of $G_F$.

The norm residue isomorphism theorem, or Milnor-Bloch-Kato conjecture, proven by Voevodsky and Rost (see \cite{voevodsky}), tells us that the algebra $H^*(G_F,\F_p)$ under cup product is isomorphic to the mod-$p$ Milnor $K$-theory of $F$ (for $F$ containing a primitive $p$th root of $1$). In particular, this implies that the $\F_p$-cohomology algebra is generated in degree $1$ with all relations generated in degree $2$.

Going beyond cup products to higher cohomological operations, Min\'a\v{c} and T\^an formulated a remarkable conjecture, known as the \emph{Massey vanishing conjecture}, for Massey products of $\fp$-valued characters on the absolute Galois group $G_F$ of a field $F$ in \cite{mt1}.  For $n \ge 3$, it states that any $n$-fold Massey product of characters $G_F \to \fp$ that has a defining system has some defining system for which the resulting Massey product is zero. The Massey product is said to \emph{contain zero} if such a defining system exists.  As evidence for this, Efrat--Matzri \cite{em} and Min\'a\v{c}--T\^an \cite{mt2} independently proved \emph{triple Massey vanishing}, which is to say the conjecture for $n = 3$ and arbitrary $p$. 

The Massey vanishing conjecture was inspired by work of Hopkins--Wickelgren \cite{hw}: using splitting varieties, they had proven that $3$-fold Massey products over number fields that are defined contain zero when $p=2$ \cite{hw}.
Massey vanishing over number fields was extended to successively general $n$ for arbitrary primes $p$: to $n=3$ in \cite{mt3}, to $n=4$ in \cite{gmt}, and to all $n$ in work of Harpaz--Wittenberg \cite{HW}. In each of these cases, the method is specific to number fields because it uses a local-to-global principal to prove the existence of rational points on a splitting variety.
For local fields, the Massey vanishing conjecture is known due to \cite{mt1}.

The differential graded ring $C(G_F,\F_p)$ of continuous $\F_p$-valued $G_F$-cochains is said to be \emph{formal} if it is quasi-isomorphic to $H^*(G_F,\F_p)$. The question of whether or not $C(G_F,\F_p)$ is always formal was raised by Hopkins and Wickelgren in their aforementioned work and answered in the negative by Positselski \cite{positselski2017}. If formality holds for some $F$ and $p$, then a stronger version of Massey vanishing, that moreover the vanishing of the consecutive cup products yields definedness, holds in that instance. 
Pal and Quick \cite{PQ2022} have recently shown that if $G_F$ is real projective (e.g., has virtual cohomological dimension at most $1$), then
$C(G_F,\F_p)$ is in fact formal. Also very recently, Quadrelli \cite{Q} showed that if $G$ is a pro-$p$ group of elementary type, then $G$ has the strong Massey vanishing property, which applies to several classes of fields.

The latter two results suppose a condition on the structure of $G_F$. Other results tend to require that several of the characters in the Massey products be the same. For instance, the third author had long ago proved in \cite{sh-massey} what we refer to here as the \emph{$p$-cyclic Massey vanishing property} for absolute Galois groups of fields containing a primitive $p$th root of unity: for $n \le p-1$, all definable $(n+1)$-fold Massey products with identical first $n$ entries vanish with respect to some proper defining system (i.e., $(\chi^{(n)},\psi)$ contains $0$ for $\chi, \psi \in H^1(G_F,\F_p)$ with $\chi \cup \psi = 0$).
Beyond this, Min\'a\v{c} and T\^an \cite{mt rigid} proved the vanishing of $n$-fold Massey products when all $n$ characters are the same, for arbitrary $n$ and fields containing $2p$th roots of unity. In sufficiently large characteristic, Efrat proved the vanishing of $n$-fold Massey products with all entries coming from either $z$ or $1-z$ for a fixed field element $z \in F^{\times} - \{1\}$, improving upon a result of Wickelgren \cite{wickelgren}. In another very recent preprint, Merkurjev and Scavia \cite{MS2022} prove that quadruple Massey products with the same first and last entries vanish for $p = 2$ for $F$ of characteristic not $2$.

As should be expected, the Massey vanishing conjecture has strong implications for the structure of absolute Galois groups. For instance, it often allows for the realization of nilpotent field extensions: we mention \cite{GM19} as an example of a recent work in this direction.

\subsection{The cyclic Massey vanishing property}
\begin{definition} \label{defn:cyclic massey vanishing}
Let $G$ be a profinite group, and let $p$ be a prime number. We say that $G$ has the \emph{$p$-cyclic Massey vanishing property}
if for all homomorphisms $\chi, \lambda \colon G \to \F_p$ with $\chi \cup \lambda = 0$, there exists a proper defining system such that
$(\chi^{(p-1)},\lambda)$ vanishes.
\end{definition}

As a simple corollary of \cite[Theorem 4.3]{sh-massey}, the absolute Galois group of field $F$ containing a primitive $p$th root of unity has the $p$-cyclic Massey vanishing property.  (For this, consider the case that $\Omega$ is the separable closure of $K$ and $m=1$ in the notation of said theorem.)  The proof uses only the fact that if the norm residue symbol $(a,b)_{p,F}$ vanishes, then $b$ is a norm from $F(a^{1/p})$. We shall give a streamlined proof of this and more, using the following abstract characterization of a standard property of absolute Galois groups.

\begin{definition} \label{defn:absGaltype}
	Let $m \ge 1$, and set $R = \Z/m\Z$.  We say that a profinite group $G$ is of 
	\emph{$m$-absolute Galois type} if it has the property that, for any $\chi \in H^1(G,R)$, the sequence
	\begin{equation} \label{eq:galoistype}
		H^1(G,R[H_\chi]) \to H^1(G,R) \xrightarrow{\chi\, \cup} H^2(G,R) \to H^2(G,R[H_\chi])
	\end{equation}
	is exact, where $H_\chi = G /\ker(\chi)$ is the coimage of $\chi$.
\end{definition}

Under Shapiro's lemma, the first and last maps in \eqref{eq:galoistype} are identified with corestriction and
restriction maps, respectively \cite[Proposition 1.6.5]{nsw}. It is well-known that an absolute Galois group $G_F$ is of $m$-absolute Galois type if $F$ contains a primitive $m$th root of unity (see for instance \cite[Propositions XIV.2 and XIV.4]{serre}).
This condition on $G_F$ generalized to arbitrary cohomological degree is heavily used in the proof of the norm residue isomorphism theorem: see \cite[Theorem 3.6]{hawe}.  We focus on the comparison of $p$-absolute Galois type with $p$-cyclic Massey vanishing.  In fact, our results would allow us to prove a more general but analogous result for profinite groups with the property that characters on $G$ of order $p^s$ lift to characters of order $p^t$ for some large enough $t$ relative to $s$, under conditions as in Section \ref{procyclic}.

\begin{remark}
It is known that there exist groups that of $p$-absolute Galois type that are not isomorphic to the absolute Galois group of any field \cite{BCQ}.
\end{remark}

\begin{proposition} \label{pcyclic}
	Let $G$ be a profinite group. Then $G$ has the $p$-cyclic Massey vanishing property if and only if the sequence
	\eqref{eq:galoistype} is exact at $H^1(G,\F_p)$.  
\end{proposition}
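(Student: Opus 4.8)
The plan is to specialize the procyclic analysis of Section \ref{procyclic} to the case $A = R = \F_p$, with $T = R$ the trivial module, $H = H_\chi$ the coimage of a nonzero $\chi \colon G \to \F_p$, and $n = p-1$. The size condition $n|R| < p|A|$ then reads $(p-1)p < p^2$ and so holds, so the results of that section apply; the case $\chi = 0$ is degenerate, since then $H_\chi$ is trivial, $R[H_\chi] = R$, and both sides of the asserted equivalence hold trivially. The crucial observation is that $R[H_\chi] = \Omega$ with $I^p = 0$, so that $\Omega = \Omega/I^p$ and $\Omega/I^{p-1} \cong \F_p[x]/(x^{p-1})$. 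Since the first map in \eqref{eq:galoistype} is induced by the augmentation $\Omega \to \Omega/I = R$ (which is corestriction under Shapiro's lemma, as remarked after Definition \ref{defn:absGaltype}), its image is exactly the set of classes $\lambda \in H^1(G,\F_p)$ that lift to $H^1(G, \Omega/I^p)$. As $\ker(\chi\,\cup)$ is by definition the set of $\lambda$ with $\chi \cup \lambda = 0$, exactness of \eqref{eq:galoistype} at $H^1(G,\F_p)$ is equivalent to the implication that every such $\lambda$ lifts to $H^1(G,\Omega/I^p)$. The reverse containment needed for this reduction is automatic: a class lifting to $\Omega/I^p$ projects to a lift over $\Omega/I^2$, and $\Psi^{(1)} = \chi\,\cup$ by Proposition \ref{prop:abelian}, so $\chi \cup \lambda = 0$.

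Next I would translate ``$\lambda$ lifts to $H^1(G,\Omega/I^p)$'' into the language of Massey products. Applying the long exact cohomology sequence to $0 \to I^{p-1}/I^p \to \Omega/I^p \to \Omega/I^{p-1} \to 0$, whose connecting map is $\Psi^{(p-1)}$, shows that $\lambda$ lifts to $H^1(G,\Omega/I^p)$ if and only if some lift $c \in H^1(G,\Omega/I^{p-1})$ of $\lambda$ satisfies $\Psi^{(p-1)}(c) = 0$. In the procyclic setting the map $p_{p-1}$ is an isomorphism, given by coordinate reversal (the analog of Lemma \ref{bicycform}), so Lemma \ref{correspondence} yields a bijection between cocycles $f \in Z^1(G,\Omega/I^{p-1})$ and proper defining systems $\rho$ relative to $\binmat{\chi}{p-1}$, under which the augmentation of $f$ is the relevant off-diagonal entry of $\rho$ and, by Theorem \ref{cyclic_case}, $\Psi^{(p-1)}([f]) = (\chi^{(p-1)},\lambda)_\rho \cdot x^{p-1}$. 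Here I would use that $B^1(G,\F_p) = 0$, so that the augmentation of any cocycle representative of $c$ equals the fixed homomorphism $\lambda$ on the nose, not merely up to a coboundary. Consequently $\lambda$ lifts to $H^1(G,\Omega/I^p)$ if and only if there is a proper defining system $\rho$ relative to $\binmat{\chi}{p-1}$ with off-diagonal $\lambda$ for which $(\chi^{(p-1)},\lambda)_\rho = 0$, which is precisely the $p$-cyclic Massey vanishing condition for the pair $(\chi,\lambda)$ of Definition \ref{defn:cyclic massey vanishing}.

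Combining the two equivalences, quantified over all $\chi$ and all $\lambda$ with $\chi \cup \lambda = 0$, yields the proposition. It is worth emphasizing how the existence of defining systems is handled in each direction, since this is where care is required. If \eqref{eq:galoistype} is exact, then exactness produces an honest lift $c$ of $\lambda$, from which a cocycle $f$ and hence the required proper defining system with vanishing Massey product is read off; conversely, the vanishing property itself supplies a proper defining system, which via the correspondence produces a cocycle $f$ with $\Psi^{(p-1)}([f]) = 0$ and therefore the lift to $H^1(G,\Omega/I^p)$. Thus no separate existence statement is needed, and the possibility that $\lambda$ lifts over $\Omega/I^2$ but not over $\Omega/I^{p-1}$ causes both sides to fail in tandem. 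The step demanding the most attention is this bookkeeping: verifying that the correspondence of Lemma \ref{correspondence} is a genuine bijection between lifts and proper defining systems (which rests on $p_{p-1}$ being an isomorphism), and that the augmentation of the lifting cocycle recovers $\lambda$ exactly.
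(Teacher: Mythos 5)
Your proof is correct and takes essentially the same route as the paper's: both identify $\F_p[H_\chi]$ with $\Omega/I^p$, observe that the kernel of $\Psi^{(p-1)}$ is the image of $H^1(G,\Omega) \to H^1(G,\Omega/I^{p-1})$, and use Theorem \ref{cyclic_case} together with the correspondence of Lemma \ref{correspondence} to equate the existence of a proper defining system with vanishing $(\chi^{(p-1)},\lambda)$ to the liftability of $\lambda$ to $H^1(G,\Omega)$. The extra details you supply beyond the paper's terse write-up --- the degenerate case $\chi = 0$, the automatic containment $\image \subseteq \ker(\chi\,\cup)$ via $\Psi^{(1)} = \chi\,\cup$, and the $B^1(G,\F_p) = 0$ bookkeeping matching cocycles to classes --- are precisely the steps the paper leaves implicit.
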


\begin{proof}
    	Let $\chi, \lambda \colon G \to \F_p$ with $\chi \cup \lambda = 0$, and set $\Omega = \F_p[H_{\chi}]$. The $p$th power
	of the augmentation ideal in $\Omega$ is zero, and the kernel of the generalized Bockstein map $\Psi^{(p-1)}$ is the image of
	$H^1(G,\Omega) \to H^1(G,\Omega/I^{p-1})$.  
	Theorem \ref{cyclic_case} tells us that the Massey product $(\chi^{(p-1)},\lambda)$ is defined and vanishes
	for some choice of proper defining system in $H^1(G,\Omega/I^{p-1})$ if and only if $\lambda$ lifts to $H^1(G,\Omega)$.  	
	From this, we have the proposition.
\end{proof}

Proposition \ref{pcyclic} applies in particular to the absolute Galois group of any field $F$ containing a primitive $p$th root of unity, i.e., $G_F$ has the $p$-cyclic Massey vanishing property.  We also have the following result, which may be of independent interest.

\begin{proposition}
	Let $G$ be a profinite group.  If \eqref{eq:galoistype} is exact at $H^2(G,\F_p)$ for a given $\chi \in H^1(G,\F_p)$, then it is
	exact at $H^1(G,\F_p)$, so $G$ is of $p$-absolute Galois type.
\end{proposition}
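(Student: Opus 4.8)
The plan is to fix $\chi$ and prove exactness of \eqref{eq:galoistype} at $H^1(G,\F_p)$; since being of $p$-absolute Galois type means this holds together with the hypothesized exactness at $H^2$ for every $\chi$, the stated conclusion then follows by applying the argument to each $\chi$. If $\chi=0$ the sequence is visibly exact, so assume $\chi\neq 0$. Then $H_\chi\cong\Z/p\Z$ and, as in Section~\ref{procyclic} with $R=\F_p$, we have $\Omega=\F_p[H_\chi]\cong\F_p[x]/(x^p)$ with augmentation ideal $I=(x)$, $I^p=0$, and $\Omega/I^{k+1}\cong\F_p[x]/(x^{k+1})$ for all $k\le p-1$. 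Write $N=\ker\chi$ and let $\epsilon_*\colon H^1(G,\Omega)\to H^1(G,\F_p)$ be the corestriction, induced by the augmentation $\epsilon\colon\Omega\to\F_p$. The inclusion $\image(\epsilon_*)\subseteq\ker(\chi\cup)$ is automatic from the projection formula since $\chi|_N=0$, so the content is to show that every $\lambda\in H^1(G,\F_p)$ with $\chi\cup\lambda=0$ lies in $\image(\epsilon_*)$, i.e.\ lifts through $\epsilon$ to a class in $H^1(G,\Omega)$.

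I would construct such a lift by climbing the filtration $\F_p=\Omega/I\leftarrow\Omega/I^2\leftarrow\cdots\leftarrow\Omega/I^p=\Omega$ one step at a time. By the long exact sequence attached to \eqref{bocksteinT}, a class $\lambda_k\in H^1(G,\Omega/I^k)$ lifts to $H^1(G,\Omega/I^{k+1})$ if and only if $\Psi^{(k)}(\lambda_k)=0$ in $H^2(G,\F_p)$, after identifying $I^k/I^{k+1}$ with $\F_p\cdot x^k$. For $k=1$ this obstruction is $\chi\cup\lambda$ by Proposition~\ref{prop:abelian}, which vanishes by hypothesis, giving a first lift $\lambda_2$. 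For the inductive step I would establish two facts. First, $\Psi^{(k)}(\lambda_k)\in\ker\bigl(\Res\colon H^2(G,\F_p)\to H^2(N,\F_p)\bigr)$: by naturality of $\Psi^{(k)}$ under restriction it equals the restricted Bockstein applied to $\Res\lambda_k$, and that restricted map vanishes because $N$ acts trivially on $\Omega$ (as $\chi|_N=0$), so \eqref{bocksteinT} over $N$ is an $\F_p$-split extension of trivial modules, whose connecting map is zero. Second, replacing $\lambda_k$ by $\lambda_k-\nu x^{k-1}$ for $\nu\in H^1(G,\F_p)$ leaves it a lift of $\lambda$ and changes the obstruction by $\Psi^{(k)}(\nu x^{k-1})=\chi\cup\nu$, which follows from the explicit formula of Proposition~\ref{connhomcomp}; hence the obstruction is well defined only modulo $\image(\chi\cup)$.

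Now I would invoke the hypothesis that \eqref{eq:galoistype} is exact at $H^2(G,\F_p)$, which under the Shapiro identification $H^2(G,\Omega)\cong H^2(N,\F_p)$ of the last map with $\Res$ reads $\ker(\Res)=\image(\chi\cup)$. Combined with the first fact, $\Psi^{(k)}(\lambda_k)=\chi\cup\nu$ for some $\nu$, and the second fact then shows the adjusted lift $\lambda_k-\nu x^{k-1}$ has vanishing obstruction, so it lifts to $H^1(G,\Omega/I^{k+1})$. Iterating from $k=1$ to $k=p-1$ yields a class in $H^1(G,\Omega/I^p)=H^1(G,\Omega)$ whose image under $\epsilon_*$ is $\lambda$, proving exactness at $H^1$. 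The \emph{main obstacle} is the careful handling of the inductive step: one must verify both that the restricted Bockstein $\Psi^{(k)}_N$ genuinely vanishes and that the indeterminacy of the obstruction under change of lift is precisely $\image(\chi\cup)$, for it is exactly the agreement of this indeterminacy with $\ker(\Res)$ furnished by the hypothesis that lets each successive obstruction be annihilated. The remaining verifications are routine diagram chases, and no finiteness hypothesis on $G$ is required because $R=\F_p$ is a field, making the comparison map \eqref{moveout} automatically an isomorphism.
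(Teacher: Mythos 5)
Your proof is correct and follows essentially the same route as the paper's: an induction up the filtration by the $\Omega/I^k$, in which the obstruction to the next lift is shown to restrict to zero on $\ker(\chi)$, hence by the exactness hypothesis at $H^2$ equals $\chi \cup \nu$ for some $\nu \in H^1(G,\F_p)$, and is then killed by adjusting the lift by $\nu x^{k-1}$, whose obstruction is exactly $\chi \cup \nu$. The only cosmetic difference is that the paper identifies the obstruction with the Massey product $(\chi^{(k)},\lambda)_f$ via Theorem \ref{cyclic_case} and sees its restriction to $\ker(\chi)$ vanish term-by-term from the explicit formula $\sum_i \binom{\chi}{i} \cup \lambda_{k-i}$ of Proposition \ref{connhomcomp}, whereas you argue by naturality of the connecting map together with the $N$-equivariant splitting of \eqref{bocksteinT} over $N = \ker(\chi)$.
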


\begin{proof}    
	Let $\chi, \lambda \colon G \to \F_p$ with $\chi \cup \lambda = 0$, and
    	suppose that \eqref{eq:galoistype} is exact at $H^2(G,\F_p)$. 
	We have to show that there is a proper defining system $\rho$
	such that $(\chi^{(p-1)},\lambda)_{\rho}$ vanishes. We may suppose that $\chi \neq 0$.  Let $x = [h]-1$ for $h \in H_{\chi}$
	with $\chi(h) = 1$.  By induction on $n$, we can assume that there is a proper 
	defining system $\rho_{x^n}$ for $(\chi^{(n)},\lambda)$ with $n < p$ determined by some 
	$f = \sum_{k=0}^{n-1} \lambda_k x^k \in Z^1(G, \Omega/I^n)$, with $\lambda$ necessarily equal to $\lambda_0$.  
	Writing $(\chi^{(n)},\lambda)_f$ for the corresponding Massey product $(\chi^{(n)},\lambda)_{\rho_{x^n}}$, we have
    	\[
    		(\chi^{(n)},\lambda)_f = \chi \cup \lambda_{n-1} + \binom{\chi}{2} \cup \lambda_{n-2} + \dots + \binom{\chi}{n} \cup \lambda.
    	\]
    	Clearly the restriction of $(\chi^{(n)},\lambda)_f$ to $\ker(\chi)$ vanishes, so, by the exactness of \eqref{eq:galoistype} at $H^2(G,\F_p)$, 
	we have $(\chi^{(n)},\lambda)_f  =\chi \cup \psi$ for some $\psi \in H^1(G,\F_p)$. Then we see that 
	$f'= f - \psi x^{n-1}$ is a proper defining system such that the Massey product $(\chi^{(n)},\lambda)_{f'}$ vanishes.
	By Theorem \ref{cyclic_case}, this implies that the class of $f'$ is in the kernel of $\Psi^{(n)}$, so it lifts to the class of some 
	$\tilde{f} \in Z^1(G,\Omega/I^{n+1})$,
	which gives rise to a proper defining system $\rho_{x^{n+1}}$ for $(\chi^{(n+1)},\lambda)$.  If $n+1 = p$, then the class of
	$\tilde{f}$ is the desired lift to $H^1(G,\Omega)$.
\end{proof}

It is unclear that exactness of \eqref{eq:galoistype} at $H^1(G,\F_p)$ should imply exactness at $H^2(G,\F_p)$.  

\subsection{Triple Massey vanishing}

In this subsection, let us suppose that $p$ is an odd prime.
The following theorem gives a new proof of the vanishing of Massey triple products for absolute Galois groups due to Efrat--Matzri \cite{em} and Min\'a\v{c}--T\^an \cite{mt2}.  Both proofs utilized the fact that the absolute Galois groups of a field containing a primitive $p$th root of unity are of $p$-absolute Galois type.  We show that the potentially weaker condition of $p$-cyclic Massey vanishing suffices.

\begin{theorem}
\label{thm:triple vanishing}
	Let $G$ be a profinite group with the $p$-cyclic Massey vanishing property for an odd prime $p$.
	Let $\chi,\psi,\lambda \in H^1(G,\F_p)$ be such that 
	$\chi \cup \lambda = \lambda \cup \psi = 0$. Then there exists a defining system $\rho$ for $(\chi,\lambda,\psi)$ 
	such that the Massey triple product $(\chi,\lambda,\psi)_{\rho}$ vanishes.
\end{theorem}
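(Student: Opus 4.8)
\emph{Setup and reduction.} The plan is to exhibit $(\chi,\lambda,\psi)$ as a value of a generalized Bockstein map and then to constrain that value using the $p$-cyclic Massey vanishing property along the cyclic quotients of $G$ cut out by $\chi$, $\psi$, and $\chi+\psi$. I would assume throughout that $\chi$ and $\psi$ are $\fp$-linearly independent (the degenerate cases are addressed at the end), let $H$ be the coimage of $(\chi,\psi)\colon G\to\fp^2$, so $H\cong\fp^2$, and take $R=T=\fp$ and $\Omega=\fp\ps{H}$ with generators $x,y$ of $I$ as in the pro-bicyclic case of Section~\ref{procyclic} and the subsection following it. Because $\chi\cup\lambda=0$ and $\psi\cup\lambda=-\lambda\cup\psi=0$, Proposition~\ref{prop:abelian} gives $\Psi^{(1)}(\lambda)=0$, so $\lambda$ lifts to a class $[f]\in H^1(G,\Omega/I^2)$ with $f=\lambda+\lambda_x x+\lambda_y y$.

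\emph{The obstruction as a Massey product.} Next I would introduce the $G$-stable ideal $J=(x^2,y^2)+I^3$, for which $\Omega/J\cong\fp[x,y]/(x^2,y^2)$ and $I^2/J\cong\fp\cdot xy$. By naturality of connecting homomorphisms, the connecting map of $0\to I^2/J\to\Omega/J\to\Omega/I^2\to0$ is the composite of $\Psi^{(2)}$ with the projection $I^2/I^3\to I^2/J$ that kills $x^2$ and $y^2$; by Theorem~\ref{bicyc_image} its value on $[f]$ is $(\chi,\lambda,\psi)_{\rho_f}\cdot xy$, where $\rho_f$ is the proper defining system attached to $f$. Thus $(\chi,\lambda,\psi)_{\rho_f}$ is exactly the obstruction to lifting $[f]$ from $H^1(G,\Omega/I^2)$ to $H^1(G,\Omega/J)$, and it remains to show it lies in the indeterminacy $\chi\cup H^1(G,\fp)+\psi\cup H^1(G,\fp)$.

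\emph{Polarization across the cyclic quotients.} For $\eta\in\{\chi,\psi,\chi+\psi\}$ let $\Omega_\eta=\fp\ps{H_\eta}$ be the group ring of the coimage of $\eta$, with augmentation generator $s_\eta$. The quotient maps $\Omega\to\Omega_\eta$ act on $I^2/I^3$ by $x^2\mapsto s_\chi^2$ and $xy,y^2\mapsto0$ for $\eta=\chi$, symmetrically for $\eta=\psi$, and $x^2,xy,y^2\mapsto s_{\chi+\psi}^2$ for $\eta=\chi+\psi$. By naturality of the generalized Bockstein map under these quotient maps together with Theorem~\ref{cyclic_case}, the $x^2$- and $y^2$-coefficients of $\Psi^{(2)}([f])$ are the cyclic triple products $(\chi^{(2)},\lambda)_{f_\chi}$ and $(\psi^{(2)},\lambda)_{f_\psi}$, while the $\chi+\psi$ quotient yields the polarization identity
\[
((\chi+\psi)^{(2)},\lambda)_{f_{\chi+\psi}}=(\chi^{(2)},\lambda)_{f_\chi}+(\chi,\lambda,\psi)_{\rho_f}+(\psi^{(2)},\lambda)_{f_\psi},
\]
where $f_\eta$ denotes the image of $[f]$ in $H^1(G,\Omega_\eta/I_\eta^2)$. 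Solving for the middle term expresses $(\chi,\lambda,\psi)_{\rho_f}$ as an alternating sum of the three cyclic products.

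\emph{Applying cyclic Massey vanishing, and conclusion.} For each $\eta$ we have $\eta\cup\lambda=0$, so Proposition~\ref{pcyclic} (through Theorem~\ref{cyclic_case}) shows $\lambda$ lifts all the way to $H^1(G,\Omega_\eta)$; hence some proper defining system makes $(\eta^{(2)},\lambda)$ vanish, and since the set of its values over all such systems is a coset of $\eta\cup H^1(G,\fp)$ — the cyclic indeterminacy computed as in Proposition~\ref{connhomcomp} — every value $(\eta^{(2)},\lambda)_{f_\eta}$ lies in $\eta\cup H^1(G,\fp)$. As $(\chi+\psi)\cup H^1(G,\fp)\subseteq\chi\cup H^1(G,\fp)+\psi\cup H^1(G,\fp)$, the identity above places $(\chi,\lambda,\psi)_{\rho_f}$ in the indeterminacy. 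Finally, varying $f$ by $\mu_1x+\mu_2y$ changes this value by $\psi\cup\mu_1+\chi\cup\mu_2$, which ranges over the whole indeterminacy, so some choice of $f$ yields a proper defining system for which $(\chi,\lambda,\psi)$ vanishes, i.e., the triple Massey product contains zero. I expect the main obstacle to be the third step: establishing the naturality of the Bockstein maps at the level of defining systems and the precise polarization identity (the ``involved diagram chase''), and matching the indeterminacy exactly. The prime $p=2$, where the polarization collapses because $x^2=y^2=0$, together with the proportional degenerate cases, require separate and more elementary treatment.
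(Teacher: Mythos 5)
Your argument is correct in substance and takes a genuinely different route from the paper's. The paper introduces the ideal $J = I^3 + xy\Omega$ — note this is \emph{complementary} to your $J=(x^2,y^2)+I^3$: the paper's $\Omega/J$ retains $x,x^2,y,y^2$ and kills $xy$ — and first lifts $\lambda$ itself to $H^1(G,\Omega/J)$ by a diagram chase (its diagram \eqref{eq:main diagram} and Lemma \ref{commdiag}) comparing the two-variable augmentation sequence with the direct sum of the three cyclic sequences for $\chi$, $\psi$, $\chi+\psi$; the single remaining obstruction to lifting to $\Omega/I^3$ is then $\partial'(\tilde{\lambda})=(\chi,\lambda,\psi)_{\rho_{xy}}\cdot xy$, and the exactness $\ker(h)=\image(\alpha_3\,\cup)$, which falls out of the long exact sequence of the bottom row of \eqref{eq:main diagram} via Proposition \ref{prop:abelian}, forces this value to be a single cup product $\alpha_3\cup\nu$. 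You instead lift $\lambda$ only to $\Omega/I^2$ and push the full formula of Theorem \ref{bicyc_image} forward along the three ring quotients $\alpha_i$; your polarization identity is valid, since the $\alpha_i$ induce maps of the short exact sequences defining $\Psi^{(2)}$, so naturality of connecting maps together with Theorem \ref{cyclic_case} gives exactly $((\chi+\psi)^{(2)},\lambda)_{f_3}=(\chi^{(2)},\lambda)_{f_1}+(\chi,\lambda,\psi)_{\rho_f}+(\psi^{(2)},\lambda)_{f_2}$, and your coset computation is also right: two cocycle lifts of $\lambda$ to $\Omega_\eta/I_\eta^2$ differ by $\mu x_\eta$ and change $\Psi^{(2)}$ by $\eta\cup\mu$ (Proposition \ref{connhomcomp}), while cyclic Massey vanishing, through Proposition \ref{pcyclic}, puts $0$ in the coset. (Your auxiliary ideal $J$ is actually decorative — you use only the $xy$-coefficient of $\Psi^{(2)}([f])$, which Theorem \ref{bicyc_image} gives directly.) What each approach buys: the paper's chase yields the sharper conclusion that its specific defining system's value is $\alpha_3\cup\nu$, one cup product, with no indeterminacy computations needed; yours avoids constructing the two comparison diagrams entirely, works for an arbitrary lift $f$, and your final adjustment of $f$ by $\mu_1 x+\mu_2 y$ even produces a \emph{proper} defining system with vanishing product, slightly more than the theorem asserts. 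Your caution about $p=2$ and the proportional case is warranted but applies equally to the paper: the paper's leftmost vertical arrow in \eqref{eq:main diagram} fails to be injective when $p=2$ (there $I_3^2=0$ while $J/I^3=\F_2\,xy\neq 0$), so both proofs implicitly require $p\ge 3$, and both dispose of linearly dependent $\chi,\psi$ by the cyclic property alone.
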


The case where $\chi$ and $\psi$ are linearly dependent follows easily from the $p$-cyclic Massey vanishing property, so we can and do assume that $(\chi,\psi) \colon G \to \F_p^2$ is surjective, and we let $H$ be the coimage. Let $\Omega=\F_p[H]$ and let $I \subset \Omega$ be the augmentation ideal. Let $h_\chi, h_\psi \in H$ be the dual basis to $(\chi,\psi)$, and let $x=[h_\chi]-1$ and $y=[h_\psi]-1$ so that $I=x\Omega+y\Omega$.

We want to make maximal use of the fact that $G$ has the cyclic Massey vanishing property. For this, we let $C_1$, $C_2$, and $C_3$ be the coimages of $\alpha_1=\chi$, $\alpha_2=\psi$, and $\alpha_3=\chi+\psi$, respectively. 
Let $\Omega_i=\F_p[C_i]$, and let $I_i \subset \Omega_i$ be its augmentation ideal. Let $\gamma_i \in C_i$ with $\alpha_i(\gamma_i) = 1$, and let $x_i = [\gamma_i]-1 \in I_i$. Note that each $\alpha_i$ factors through $H$, so $\alpha_i$ induces a surjective ring homomorphism $\Omega \to \Omega_i$ that we also call $\alpha_i$. Then note that
\begin{equation}\label{eq:alpha of variables}
(\alpha_1(x),\alpha_1(y)) = (x_1,0), \quad (\alpha_2(x),\alpha_2(y)) = (0,x_2), \quad  (\alpha_3(x),\alpha_3(y)) = (x_3, x_3).
\end{equation}

Now consider the ideal $J = I^3 + xy\Omega$, and let $J_i = \alpha_i(J)$. By \eqref{eq:alpha of variables}, we have $J_1=I_1^3$, $J_2=I_2^3$, and $J_3=I_3^2$. Hence we have a commutative diagram with exact rows
\begin{equation}
	\label{eq:main diagram}
	\begin{tikzcd}
	0 \ar[r] & J/I^3 \ar[r] \ar[d,"\wr"] & I/I^3 \ar[r] \ar[d, hook] & I/J \ar[r] \ar[d, hook]& 0 \\ 
	0 \ar[r] & I_3^{2}/I_3^3 \ar[r] & \bigoplus_{i=1}^3 I_i/I_i^3 \ar[r] &  \bigoplus_{i=1}^3 I_i/J_i \ar[r] 
& 0,
	\end{tikzcd}
\end{equation}
where the vertical maps are induced by the maps $\alpha_i$. Note that $J/I^3 = \F_p xy$, so the leftmost vertical arrow is an isomorphism, and $I/J= \F_p x \oplus \F_p x^2 \oplus  \F_p y \oplus \F_p y^2$, and the map $I/J \to I_1/I_1^3 \oplus I_2/I_2^3$ is an isomorphism, so the rightmost vertical arrow is split injective.

\begin{lemma} \label{commdiag}
	There is a commutative diagram with exact rows
	\begin{equation}
	\label{eq:cohom of main}
	\begin{tikzcd}
		H^1(G,I/J) \ar[r] \ar[d]  & H^2(G,J/I^3) \ar[r,"\iota"] \ar[d,"\wr","f"'] & H^2(G,I/I^3) \ar[d,"g"'] \ar[r] & H^2(G,I/J) \ar[d, hook] \\
		H^1(G, \F_p) \ar[r,"\alpha_3 \, \cup"] & H^2(G,\F_p) \ar[r,"h"] & \bigoplus_{i=1}^3 H^2(G,I_i/I_i^3) \ar[r] & \bigoplus_{i=1}^3 H^2(G,I_i/J_i),
	\end{tikzcd}
	\end{equation}
	where $f$ is the isomorphism $\xi \cdot xy \mapsto \xi$ and $g$ is the map induced by the center vertical arrow in \eqref{eq:main diagram}.
\end{lemma}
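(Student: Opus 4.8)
The plan is to obtain both rows of \eqref{eq:cohom of main} as the long exact $G$-cohomology sequences attached to the two short exact sequences of \eqref{eq:main diagram}, deriving the vertical maps and their commutativity from naturality of these long exact sequences together with two elementary identifications of one-dimensional pieces with $\F_p$. Concretely, the top row is the relevant stretch of the long exact sequence of $0 \to J/I^3 \to I/I^3 \to I/J \to 0$, so its exactness is automatic and the unlabeled first arrow is the connecting map $\partial$. For the bottom row I would first record that the lower short exact sequence of \eqref{eq:main diagram} decomposes as a direct sum over $i \in \{1,2,3\}$: for $i = 1, 2$ the map $I_i/I_i^3 \to I_i/J_i$ is an isomorphism (as $J_i = I_i^3$), while for $i = 3$ it is the ``Bockstein'' sequence $0 \to I_3^2/I_3^3 \to I_3/I_3^3 \to I_3/I_3^2 \to 0$ (as $J_3 = I_3^2$). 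Hence the kernel $I_3^2/I_3^3$ lies entirely in the $i=3$ summand, and the connecting map of the lower sequence is supported there.

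Next I would set up the identifications. Since $J/I^3 = \F_p\, xy$ and $I_3^2/I_3^3 = \F_p\, x_3^2$ are one-dimensional with trivial $G$-action, the maps $H^2(G, J/I^3) \cong H^2(G,\F_p)$ (this is $f$) and $H^2(G, I_3^2/I_3^3) \cong H^2(G,\F_p)$ are canonical, and the left vertical arrow $J/I^3 \to I_3^2/I_3^3$ of \eqref{eq:main diagram} sends $xy \mapsto x_3^2$, so it is compatible with $f$. I would then \emph{define} the leftmost vertical map of \eqref{eq:cohom of main} to be the one induced on $H^1$ by the $i=3$ component $I/J \xrightarrow{\alpha_3} I_3/J_3 = I_3/I_3^2 \cong \F_p$ of the right vertical map of \eqref{eq:main diagram}. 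The middle and right squares of \eqref{eq:cohom of main} then commute by naturality of $H^2(G,-)$ applied to the left and right commutative squares of \eqref{eq:main diagram}, with $g$ induced by the center vertical arrow; and the rightmost vertical map is injective because the corresponding arrow of \eqref{eq:main diagram} is split injective.

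The one genuine computation, which I expect to be the crux, is the identification of the connecting map $\partial_3 \colon H^1(G, I_3/I_3^2) \to H^2(G, I_3^2/I_3^3)$ of the $i=3$ Bockstein sequence with the cup product $\alpha_3 \cup (-)$ under the identifications $I_3/I_3^2 \cong \F_p$ and $I_3^2/I_3^3 \cong \F_p$. Here I would use that $g \in G$ acts on $\Omega_3 = \F_p[C_3]$ by multiplication by $(1+x_3)^{\alpha_3(g)}$: lifting a cocycle $\lambda_0 x_3 \in Z^1(G, I_3/I_3^2)$ naively to $I_3/I_3^3$ and computing the coboundary, the degree-one terms cancel because $\lambda_0$ is a cocycle, leaving the value $\alpha_3(g)\lambda_0(h)\, x_3^2$, which represents $(\alpha_3 \cup \lambda_0)\, x_3^2$. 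This is the shift-by-one incarnation of the formula of Proposition \ref{prop:abelian}, and the care needed lies in keeping the nontrivial $G$-action on $I_3/I_3^3$ straight against the trivial actions on the two graded pieces.

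With $\partial_3 = \alpha_3 \cup$ in hand, the remaining assertions follow formally. The left square commutes because, by naturality of connecting maps applied to the map of short exact sequences induced by $\alpha_3$, one has $f \circ \partial = \partial_3 \circ (\alpha_3)_* = (\alpha_3 \cup)\circ(\text{left vertical})$. Finally, exactness of the bottom row at $H^2(G,\F_p)$ holds because the connecting map of the lower sequence equals $\partial_3$ precomposed with the surjective projection onto the $i=3$ summand, so that $\image(\alpha_3 \cup) = \ker h$, while exactness at $\bigoplus_i H^2(G, I_i/I_i^3)$ is merely exactness of the lower long exact sequence. The principal difficulty is thus isolating and executing the cocycle computation for $\partial_3$; everything else is bookkeeping of the several $\F_p$-identifications and appeals to naturality.
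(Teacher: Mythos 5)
Your proposal is correct and follows essentially the same route as the paper: take $G$-cohomology of \eqref{eq:main diagram}, use that the lower short exact sequence splits into its $i=1,2,3$ summands with the connecting map supported on the $i=3$ (Bockstein) piece, deduce injectivity of the right vertical arrow from split injectivity on coefficients, and reduce everything to the identification $\partial_3 = \alpha_3\,\cup$. The only difference is at that crux: where you verify $\partial_3 = \alpha_3 \cup (\sdot)$ by a direct coboundary computation on the naive lift (which is correct as written), the paper instead identifies the $i=3$ sequence with $0 \to I_3/I_3^2 \to \Omega_3/I_3^2 \to \F_p \to 0$ via the degree-shift isomorphism given by multiplication by $x_3$ and then quotes Proposition \ref{prop:abelian} --- the same calculation, already recorded there.
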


\begin{proof}
	The lower sequence in \eqref{eq:main diagram} is a direct sum of three exact sequences for $i \in \{1,2,3\}$, where for $i \in \{1,2\}$ 
	the sequence has zero as its first term.  Taking cohomology of \eqref{eq:main diagram}, we obtain the commutative diagram with exact rows
	\begin{equation}
	\begin{tikzcd}[column sep = small]
		H^1(G,I/J) \ar[r] \ar[d]  & H^2(G,J/I^3) \ar[r,"\iota"] \ar[d,"\wr"] & H^2(G,I/I^3) \ar[d,"g"'] \ar[r] & H^2(G,I/J) \ar[d,hook] \\
		\bigoplus_{i=1}^3 H^1(G,I_i/J_i)\ar[r,"\partial_3"] & H^2(G,I_3^2/I_3^3) \ar[r] & \bigoplus_{i=1}^3 H^2(G,I_i/I_i^3) \ar[r] & \bigoplus_{i=1}^3 		H^2(G,I_i/J_i),
	\end{tikzcd}
	\end{equation}
	where $\partial_3$ is zero on the first two terms of the summand and the connecting map on the third.   Note that the rightmost
	vertical arrow is injective by the split injectivity of the underlying map on coefficients.
	To complete the proof, we have to show that $\partial_3(\beta) = \alpha_3 \cup \beta$ for $\beta \in H^1(G,I_3/J_3)$. 
	But the lower sequence in \eqref{eq:main diagram} for $i=3$ is isomorphic to
	\[
		0 \to I_3/I_3^2 \to \Omega_3/I_3^2 \to \F_p \to 0
	\]
	via the isomorphism $I_3/J_3 \xrightarrow{\sim} \F_p$ taking the image of $x_3$ to $1$, 
	so this follows from Proposition \ref{prop:abelian}.
\end{proof}

\begin{proof}[Proof of Theorem \ref{thm:triple vanishing}]
	Consider the commutative diagram of exact sequences
	\[
	\begin{tikzcd}
		0 \ar[r]  & J/I^3 \ar[r] \ar[d] & \Omega/I^3 \ar[r] \ar[d,equal] & \Omega/J \ar[r] \ar[d] & 0 \\
		0 \ar[r] & I/I^3 \ar[r] & \Omega/I^3 \ar[r] & \F_p \ar[r] & 0
	\end{tikzcd}
	\]
	and the associated diagram in cohomology
	\begin{equation}
	\label{eq:J and I}
	\begin{tikzcd}
		H^1(G, \Omega/I^3) \ar[d,equal] \ar[r] & H^1(G, \Omega/J) \ar[r,"\partial'"] \ar[d] & H^2(G, J/I^3) \ar[d,"\iota"] \ar[r] & H^2(G, \Omega/I^3) 
		\ar[d,equal] \\
		H^1(G, \Omega/I^3) \ar[r] & H^1(G,\F_p) \ar[r,"\partial"] & H^2(G,I/I^3) \ar[r] & H^2(G, \Omega/I^3),
	\end{tikzcd}
	\end{equation}
	where $\iota$ is as in \eqref{eq:cohom of main}.

	Now let $\lambda \in H^1(G,\F_p)$ be as in the statement of the theorem and consider the element $\partial(\lambda) \in H^2(G,I/I^3)$. 
	Then $g(\partial(\lambda)) \in \bigoplus_{i=1}^3 H^2(G,I_i/I_i^3)$ is the obstruction to lifting $\lambda$ to $H^1(G,\Omega/I_i^3)$ for 
	all $i \in \{1,2,3\}$, and this vanishes by the $p$-cyclic Massey vanishing property.  Hence $g(\partial(\lambda))=0$. 

	By Lemma \ref{commdiag} and the injectivity of the rightmost vertical arrow in \eqref{eq:cohom of main}, this implies that 
	$\partial(\lambda)$ is in the image of $\iota$.  By the commutativity of \eqref{eq:J and I}, there is then a lift 
	$\tilde{\lambda} \in H^1(G, \Omega/J)$ 
	of $\lambda$. Using Corollary \ref{cor:bicyclic}, we see that $\tilde{\lambda}$ 
	determines a proper defining system $\rho_{xy}$ for $(\chi,\lambda,\psi)$ such that
	 $\partial'(\tilde{\lambda}) =(\chi,\lambda,\psi)_{\rho_{xy}} \cdot xy$.

	By Lemma \ref{commdiag}, we have
	\[
		hf(\partial'(\tilde{\lambda})) = g\iota(\partial'(\tilde{\lambda}))=g(\partial(\lambda))=0,
	\]
	so $f(\partial'(\tilde{\lambda})) \in \ker(h) = \image(\alpha_3\, \cup)$. Hence we have
	\begin{equation}
	\label{eq:psi}
		f(\partial'(\tilde{\lambda})) = (\chi,\lambda,\psi)_{\rho_{xy}} = \alpha_3 \cup \nu = \chi \cup \nu - \nu \cup \psi
	\end{equation}
	in $H^2(G,\F_p)$, for some $\nu \in H^1(G,\F_p)$.  In particular, we have that
	$$
		(\chi,\lambda,\psi)_{\rho_{xy}} \in \image(\chi\, \cup) + \image(\cup\, \psi),
	$$ 
	which implies that there is a defining system $\rho$ such that $(\chi,\lambda,\psi)_{\rho} = 0$.
\end{proof}

The reader may note that in Theorem \ref{thm:triple vanishing}, we used something weaker than $p$-cyclic Massey
vanishing.  Namely, the actual condition employed is that for any character $\chi \colon G \to \F_p$, the sequence
\begin{equation} \label{weaker_exactness}
	H^1(G,\F_p[H_{\chi}]/I_{\chi}^3) \to H^1(G,\F_p) \xrightarrow{\chi \,\cup} H^2(G,\F_p)
\end{equation}
is exact, where $H_{\chi} = G/\ker(\chi)$ and $I_{\chi}$ is the augmentation ideal in $\F_p[H_{\chi}]$.  This is
equivalent to the statement that if $\chi \cup \lambda = 0$ for some $\lambda \in H^1(G,\F_p)$, then 
$(\chi,\chi,\lambda)$ is zero for some proper defining system.

\begin{remark}
	In \cite[Corollary 3.5]{matzri}, Matzri proved that triple Massey vanishing follows from defined Massey products of the form 
	$(\chi,\lambda,\chi)$ containing zero. The proof exploits the exactness of \eqref{eq:galoistype} at $H^2(G,\F_p)$
	to obtain this vanishing.  From our perspective, the vanishing of these Massey products follows directly from the exactness 
	of \eqref{weaker_exactness}.
\end{remark}

\begin{remark}
	The proof of Theorem \ref{thm:triple vanishing} does not show that $G$ has the ``bicyclic Massey vanishing property'' that 
	any $\lambda \in H^1(G,\F_p)$ that lifts to 
	$H^1(G,\Omega/I^2)$ lifts further to $H^1(G,\Omega/I^3)$.  Equivalently, this condition can be formulated as saying that if 
	$\chi \cup \lambda = \lambda \cup \psi = 0$, then $\partial(\lambda)=0$. 
	One can show that this is equivalent to showing that there exists $\nu \in H^1(G,\F_p)$ satisfying \eqref{eq:psi} that lies in
	the subgroup
	$$
		\ker(\chi\, \cup)+\ker(\psi \,\cup) + \ker((\chi+\psi)\,\cup).
	$$  
\end{remark}

\appendix
\section{Two lemmas from homological algebra}
\label{appendix}

We provide a proof of the following simple lemma from homological algebra for the reader's convenience.

\begin{lemma} \label{commsquare}
	Let $\mc{Q}$, $\mc{R}$,
	and $\mc{S}$ be abelian categories such that $\mc{Q}$ and $\mc{R}$ have enough projectives,
	and let $F \colon \mc{R} \to \mc{S}$ and
	$F' \colon \mc{Q} \to \mc{R}$ be right exact functors such that $F'$ sends projective objects to $F$-acyclic objects.  
	Let 
	$$
		0 \to A \to B \to C \to 0
	$$  
	be an exact sequence in $\mc{Q}$ such that
	$$
		0 \to F'(A) \to F'(B) \to F'(C) \to 0
	$$
	is exact.  For each $j \ge 0$, we have commutative diagrams
	$$
		\begin{tikzcd}
		L_{j+1}(F \circ F')(C) \arrow{r} \arrow{d} & L_j(F \circ F')(A) 
		\arrow{d} \\
		L_{j+1}F(F'(C)) \arrow{r} & L_jF(F'(A)),
		\end{tikzcd}
	$$
	in which the vertical
	arrows are edge maps in the Grothendieck spectral sequence attached to the composition $F \circ F'$
	and the horizontal maps are connecting morphisms,
	where $L_i$ denotes the $i$th left derived functor.
\end{lemma}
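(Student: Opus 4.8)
The plan is to compute both rows of the desired square from a single short exact sequence of complexes, and to identify the vertical edge maps with the map induced by truncation to degree zero.

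First I would choose compatible resolutions. By the horseshoe lemma applied to $0 \to A \to B \to C \to 0$, pick projective resolutions $P^A_\bullet \to A$, $P^B_\bullet \to B$, $P^C_\bullet \to C$ in $\mc{Q}$ fitting into a short exact sequence of complexes $0 \to P^A_\bullet \to P^B_\bullet \to P^C_\bullet \to 0$ that is split in each degree. Since $F'$ is additive it preserves degreewise splitness, so writing $K^X_\bullet = F'(P^X_\bullet)$ for $X \in \{A,B,C\}$ gives a degreewise-split short exact sequence of complexes $0 \to K^A_\bullet \to K^B_\bullet \to K^C_\bullet \to 0$ in $\mc{R}$. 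As $L_0 F' = F'$, one has $H_0(K^X_\bullet) = F'(X)$ and $H_q(K^X_\bullet) = L_q F'(X)$, and the sequence induced on $H_0$ is precisely $0 \to F'(A) \to F'(B) \to F'(C) \to 0$, which is exact by hypothesis.

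Next I would record two standard identifications for each $X$. Because each $K^X_n = F'(P^X_n)$ is $F$-acyclic, the second hyperhomology spectral sequence of $K^X_\bullet$ degenerates, giving $\Li_n F(K^X_\bullet) \cong H_n(F(K^X_\bullet)) = H_n\big((F\circ F')(P^X_\bullet)\big) = L_n(F\circ F')(X)$; and the first hyperhomology spectral sequence ${}^I E^2_{p,q} = L_p F(H_q(K^X_\bullet)) = L_p F(L_q F'(X))$ is exactly the Grothendieck spectral sequence. The edge map onto the bottom row ${}^I E^2_{n,0} = L_n F(F'(X))$ is induced by the canonical chain map $\epsilon_X \colon K^X_\bullet \to H_0(K^X_\bullet) = F'(X)$, with target concentrated in degree $0$: indeed $\epsilon_X$ is an isomorphism on $H_0$ and zero on $H_q$ for $q > 0$, so on $E^2$-pages it is the identity along $q=0$ and zero elsewhere, whence $\Li_n F(\epsilon_X)$ is precisely the edge map $L_n(F\circ F')(X) \to L_n F(F'(X))$.

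Finally I would invoke naturality. The maps $\epsilon_X$ are compatible with the inclusions and projections, so they assemble into a morphism from the top short exact sequence of complexes $0 \to K^A_\bullet \to K^B_\bullet \to K^C_\bullet \to 0$ to the bottom one $0 \to F'(A) \to F'(B) \to F'(C) \to 0$ viewed in degree $0$. Applying the hyper-derived functor $\Li F$ and using the naturality of the connecting homomorphism in the long exact hyperhomology sequence attached to a short exact sequence of complexes, I obtain for each $j$ a commutative square whose horizontal maps are connecting morphisms and whose verticals are $\Li_\bullet F(\epsilon_X)$. Under the identifications above this is exactly the asserted diagram: the top connecting map is that of $L_\bullet(F\circ F')$ for $0\to A\to B\to C\to 0$, since it is computed from the degreewise-split sequence $0 \to (F\circ F')(P^A_\bullet) \to (F\circ F')(P^B_\bullet) \to (F\circ F')(P^C_\bullet)\to 0$; the bottom connecting map is that of $L_\bullet F$ for $0 \to F'(A)\to F'(B)\to F'(C)\to 0$; and the verticals are the edge maps. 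The step requiring the most care is the identification of the spectral-sequence edge map with $\Li F(\epsilon_X)$ together with the matching of the hyperhomology connecting map with that of $L_\bullet(F\circ F')$; both reduce to the behavior of homology of degreewise-split short exact sequences of complexes, so beyond this bookkeeping no genuine obstacle remains.
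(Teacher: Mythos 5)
Your proposal is correct and takes essentially the same approach as the paper: the paper's proof likewise starts from horseshoe-compatible projective resolutions, applies $F'$, identifies $L_j(F\circ F')(X)$ with $H_j$ of $F$ applied to a total complex using the $F$-acyclicity of the terms of $F'(P^X_\cdot)$, and realizes the vertical edge maps via augmentation maps. The only difference is packaging: where you quote standard hyperhomology facts (collapse of the second hypercohomology spectral sequence for complexes of $F$-acyclics, identification of the edge map with the map induced by truncation to $H_0$, naturality of connecting morphisms), the paper verifies them by hand with compatible termwise-split Cartan--Eilenberg resolutions of the three complexes.
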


\begin{proof}
	Let $X$ denote any of $A$, $B$, and $C$.
	We may choose projection resolutions $P^X_{\cdot}$ 
	of $X$ with each term of
	$$
		0 \to F'(P^A_\cdot) \to F'(P^B_\cdot) \to F'(P^C_\cdot) \to 0
	$$ 
	split exact.  Then we may choose first quadrant 
	Cartan-Eilenberg resolutions
	$Q^X_{\cdot,\cdot}$ of the $F'(P^X_{\cdot})$ fitting in split exact sequences
	$$
		0 \to Q^A_{j,k} \to Q^B_{j,k} \to Q^C_{j,k} \to 0
	$$
	so that, in particular, we have exact sequences
	$$
		0 \to H_k(Q^A_{j,\cdot}) \to H_k(Q^B_{j,\cdot})
		\to H_k(Q^C_{j,\cdot}) \to 0,
	$$
	and the complexes
	$H_k(Q^X_{\cdot,\cdot}) \to H_k(F'(P_{\cdot}^X))$
	are projective resolutions.
	Note that 
	$$
		H_j(F(H_k(Q_{\cdot,\cdot}^X))) = L_jF(L_kF'(X)),
	$$ 
	and we have canonical isomorphisms
	$$
		H_j(F(\Tot Q_{\cdot,\cdot}^X)) \xrightarrow{\sim} H_j(F \circ F'(P_\cdot^X)) = L_j(F \circ F')(X),
	$$
	the first isomorphism as the terms of $F'(P_{\cdot}^X)$ are $F$-acyclic.
	The diagram in question is then simply
	$$
		\begin{tikzcd}
		H_{j+1}(F(\Tot Q_{\cdot,\cdot}^C)) \arrow{r} \arrow{d} & H_j(F(\Tot Q_{\cdot,\cdot}^A))
		\arrow{d} \\
		H_{j+1}(F(H_0(Q_{\cdot,\cdot}^C))) \arrow{r} & H_j(F(H_0(Q_{\cdot,\cdot}^A))),
		\end{tikzcd}
	$$
	the horizontal arrows being the connecting homomorphisms and the vertical arising from
	the augmentation maps on the total complexes.
\end{proof}

The following lemma is rather elementary but also useful to us.

\begin{lemma} \label{dontneedderived}
	Let $\mc{R}$ and $\mc{S}$ be abelian categories such that $\mc{R}$ has enough projectives.
	Let $F \colon \mc{R} \to \mc{S}$ be a left exact functor.  Suppose
	that $G \colon \mc{R} \to \mc{S}$ is a functor such that the pair $(F,G)$ extends to a functor from short exact sequences 
	$0 \to A \to B \to C \to 0$ in $\mc{R}$ to exact sequences
	$$
		G(A) \to G(B) \to G(C) \xrightarrow{\delta} F(A) \to F(B) \to F(C) \to 0.
	$$
	Then there is a natural transformation $G \rightsquigarrow L_1F$ such that the resulting diagrams
	$$
		\begin{tikzcd} 
		G(C) \arrow{dr}{\delta} \arrow{d} \\
		L_1F(C) \arrow[r,"\partial" near start] & F(A),
		\end{tikzcd}
	$$
	are commutative for the usual connecting homomorphisms $\partial$ and 	
	such that $G(A) \to L_1F(A)$ is an epimorphism for all objects $A$ of $\mc{R}$.
\end{lemma}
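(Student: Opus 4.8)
The plan is to construct the natural transformation $G \rightsquigarrow L_1F$ objectwise using a projective presentation, exploiting that the given six-term sequence already realizes $L_1F(A)$ as the kernel term inside which $\delta$ lands. Concretely, for an object $A$ of $\mc{R}$ I would first choose a short exact sequence $0 \to K \to P \to A \to 0$ with $P$ projective. The genuine derived long exact sequence of $F$ reads
$$
	L_1F(P) \to L_1F(A) \xrightarrow{\partial} F(K) \to F(P) \to F(A) \to 0,
$$
and since $L_1F(P) = 0$, the connecting map $\partial$ is a monomorphism identifying $L_1F(A)$ with $\ker(F(K) \to F(P))$. Applying instead the functorial six-term sequence of $(F,G)$ to the same short exact sequence gives
$$
	G(P) \to G(A) \xrightarrow{\delta} F(K) \to F(P) \to F(A) \to 0,
$$
so that $\image(\delta) = \ker(F(K) \to F(P))$ as well. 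Hence $\image(\delta) = \image(\partial)$, and since $\partial$ is monic there is a unique morphism $g_A \colon G(A) \to L_1F(A)$ with $\partial \circ g_A = \delta$; it is an epimorphism because $\delta$ surjects onto $\ker(F(K) \to F(P))$.

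The second step is to check that $g_A$ is independent of the chosen presentation and natural in $A$. Given a morphism $A \to A'$ and presentations $0 \to K \to P \to A \to 0$ and $0 \to K' \to P' \to A' \to 0$, projectivity of $P$ lets me lift $A \to A'$ to a morphism of short exact sequences, inducing $F(K) \to F(K')$. Naturality of the connecting maps of the derived functor $L_\bullet F$ and naturality of $\delta$ for the functor $(F,G)$ both express compatibility with this induced map, and combined with the defining relation $\partial \circ g = \delta$ (together with $\partial$ being monic) they force $g_{A'} \circ G(A \to A') = L_1F(A \to A') \circ g_A$. Taking $A = A'$ with two presentations and the identity map gives well-definedness.

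The final step is the commuting triangle for an arbitrary short exact sequence $0 \to A \to B \to C \to 0$. Here I would compare this sequence with a projective presentation $0 \to K \to P \to C \to 0$ of $C$: projectivity of $P$ produces a lift $P \to B$ over $C$ and hence a map $K \to A$, giving a morphism of short exact sequences that induces $F(K) \to F(A)$. Naturality of the connecting map of $L_\bullet F$ gives $\partial^{B} = F(K \to A) \circ \partial^{P}$, where $\partial^{P} \colon L_1F(C) \to F(K)$ and $\partial^{B} \colon L_1F(C) \to F(A)$ are the respective connecting maps; naturality of $\delta$ gives $\delta^{B} = F(K \to A) \circ \delta^{P}$. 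Since $g_C$ is defined by $\partial^{P} \circ g_C = \delta^{P}$, precomposing with $F(K \to A)$ yields $\partial^{B} \circ g_C = \delta^{B}$, which is exactly the asserted commutativity of the triangle.

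The main obstacle is not the construction of $g_A$, which is essentially forced by matching images inside $F(K)$, but the bookkeeping in the second and third steps: everything rests on the hypothesis that $(F,G)$ is genuinely functorial on short exact sequences, so that $\delta$ and $\partial$ satisfy the \emph{same} naturality squares under the comparison maps extracted from projectivity. Once this common naturality is secured, the uniqueness of $g_A$ coming from $\partial$ being a monomorphism propagates it through all the required diagrams with no further computation.
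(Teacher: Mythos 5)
Your proposal is correct and follows essentially the same route as the paper's proof: choose a projective presentation $0 \to K \to P \to A \to 0$, use $L_1F(P)=0$ to identify $L_1F(A)$ with $\ker(F(K)\to F(P))$ via the monic $\partial$, and define $G(A)\to L_1F(A)$ as the unique factorization of $\delta$, with surjectivity coming from exactness of the six-term sequence at $F(K)$ (the paper gets this from a four/five-lemma argument in the same comparison diagram). The only difference is that you explicitly carry out the naturality checks and the comparison of an arbitrary short exact sequence with a projective presentation of $C$, steps the paper dismisses as standard; your execution of them is sound.
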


\begin{proof}
	Put any object $A$ of $\mc{R}$ in an exact sequence
	$$
		0 \to K \to P \to A \to 0
	$$
	in $\mc{R}$, where $P$ is a projective object.  We then have a commutative diagram
	$$
		\begin{tikzcd}
		G(P) \arrow{r} & G(A) \arrow{r} \arrow{d} & F(K) \arrow[equal, d] \arrow{r} & F(P) \arrow[equal,d]\\
		0 \arrow{r} & L_1F(A) \arrow{r} & F(K) \arrow{r} & F(P),
		\end{tikzcd}
	$$
	with exact rows, where the vertical morphism is unique making the diagram commute.
	That this gives a natural transformation is standard, and the fact that the morphisms are epimorphisms
	follows from the four lemmas.
\end{proof}

\begin{ack}
	This paper arose out of a project of the first, second, and fifth authors at the 2018 Arizona Winter School on Iwasawa theory 
	that was proposed by the third and led by the third and fourth authors.  We would like to thank the AWS for the stimulating
	environment that enabled our collaboration.  We would also like to thank Nguy\~{\^e}n Duy T\^an for helpful comments on 
	an earlier draft of this article, as well as the anonymous referee for a number of suggestions that helped us 
	to improve the exposition.
\end{ack}

\begin{fin}
	The second author's research was partially supported by the National Science Foundation under Grant No.\ DMS-2200541.
	The third author's research was supported in part by
	the National Science Foundation under Grant No.\  DMS-2101889.   The fourth author's research was supported in part by
	the National Science Foundation under Grant No.\  DMS-1901867.  The fifth author was partially supported by a Foerster-Berstein 
	Fellowship at Duke University and the National Science Foundation under Grant No.\ DMS-2201346.
\end{fin}

\renewcommand{\baselinestretch}{1}

\end{document}